\newtheorem{theorem}{Theorem}[section]
\newtheorem*{theorem*}{Theorem}
\newtheorem{proposition}[theorem]{Proposition}
\newtheorem{corollary}[theorem]{Corollary}
\newtheorem{lemma}[theorem]{Lemma}
\newtheorem*{conjecture*}{Conjecture}
\theoremstyle{definition}
\newtheorem{definition}{Definition}[section]
\newtheorem{remark}{Remark}[section]
\newtheorem{assumption}{Assumption}
\newtheorem*{notation}{Notation}
\numberwithin{equation}{section}
\theoremstyle{remark}
\newtheorem{example}{Example}[section]
\newcommand{\mbbR}{{\mathbb R}}
\def\mcA{\mathcal{A}}
\def\mcC{\mathcal{C}}
\newcommand{\E}{\mathcal{E}}
\newcommand{\F}{\mathcal{F}}
\def\mcH{\mathcal{H}}
\def\mcR{\mathcal{R}}
\def\mcS{\mathcal{S}}
\newcommand{\SG}{\operatorname{SG}}
\newcommand{\I}{\operatorname{I}}
\newcommand{\supp}{\operatorname{supp}}
\newcommand{\dom}{\operatorname{dom}}
\newcommand{\rH}{r_{\text{H}}}
\newcommand{\rI}{r_{\I}}
\newcommand{\rSG}{r_{\SG}}
\def\ss#1{\scriptscriptstyle{#1}}
\def\sss#1{\scriptscriptstyle{(#1)}}
\title{Analysis on hybrid fractals}
\author[P. Alonso Ruiz]{Patricia Alonso Ruiz}
\address[P. Alonso Ruiz]{University of Connecticut, 341 Mansfield Road, Storrs CT 06226.\newline \texttt{patricia.alonso-ruiz@uconn.edu}}
\author[Y. Chen]{Yuming Chen}
\address[Y. Chen]{University of Hong Kong, 9 Pokfulam Road, Hong Kong SAR.\newline \texttt{chenym@hku.hk}}
\author[H. Gu]{Haotian Gu}
\address[H. Gu]{University of Hong Kong, 4A No.85 Smithfield Road, Kennedy Town, Hong Kong SAR\newline
\texttt{ght1368@hku.hk}}
\author[R.S. Strichartz]{Robert S. Strichartz}
\address[R.S. Strichartz]{Cornell University, 310 Malott Hall, Ithaca NY 14853. \texttt{str@math.cornell.edu}}
\author[Z. Zhou]{Zirui Zhou}
\address[Z. Zhou]{ University of California Berkeley, 2301 Durant Avenue, Berkeley, CA 94704\newline
\texttt{zirui$\_$zhou@berkeley.edu}}
\subjclass[2010]{28A80; 31C25; 31C20}
\keywords{hybrid fractal, resistance form, energy, Laplacian, spectrum.}
\begin{document}
\begin{abstract}
We introduce hybrid fractals as a class of fractals constructed by gluing several fractal pieces in a specific manner and study energy forms and Laplacians on them. We consider in particular a hybrid based on the $3$-level Sierpinski gasket, for which we construct explicitly an energy form with the property that it does not ``capture'' the $3$-level Sierpinski gasket structure. This characteristic type of energy forms that ``miss'' parts of the structure of the underlying space are investigated in the more general framework of finitely ramified cell structures. The spectrum of the associated Laplacian and its asymptotic behavior in two different hybrids is analyzed theoretically and numerically. A website with further numerical data analysis is available at \url{http://www.math.cornell.edu/~harry970804/}.
\end{abstract}
\maketitle

\section{Introduction}
%
The term \textit{hybrid} refers to something that is a mixture of several other things and hence a \textit{hybrid fractal} is a fractal which is a mixture of other fractals; see Section~\ref{section:HF} for its precise definition. The present paper investigates several questions concerning the analysis, in particular the energy and Laplacian, that can be constructed on this type of sets.

\medskip

One of the basic tools to develop analysis on arbitrary metric spaces is the theory of \textit{resistance forms} due to Kigami~\cite{Kig89,Kig01}. These forms thus provide an essential mathematical structure to model physical phenomena on rough spaces, as for instance heat or wave propagation, that are the object of many investigations in the fractal setting~\cite{B+08,ABD+12,ADL14,A++17,AR17}. Our aim here is to study these structures on hybrid fractals. On finitely ramified fractals, see e.g.~\cite{Kig93,Kig01,HMT06,Tep08}, resistance forms typically arise as the limit of a sequence of energies on finite weighted graph approximations that carry along the intrinsic structure of the fractal. An appropriate choice of the weights (resistances) is crucial in order to obtain a meaningful limit and it is remarkable how far beyond the existence of that limit the consequences of this choice actually go. 

\medskip

In this regard, this paper addresses the following question: Does a resistance form reflect the intrinsic structure of the underlying space completely? Previous investigations~\cite{ARKT16,ARFK17} revealed the possibility that a resistance form may ``miss'' essential properties of the space on which it is defined, yielding an incomplete framework to treat analytic questions. More precisely, in the mentioned works a resistance form was constructed on the fractal set displayed in Figure~\ref{fig:SSG} which did not produce ``fractal analysis''. To obtain this type of information about a resistance form requires a more explicit expression than the usual limit of graph energies and, ultimately, this is tantamount to a characterization of its domain.

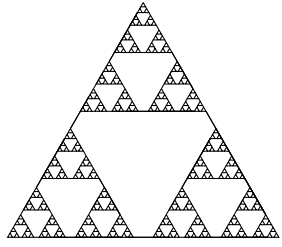
\begin{figure}[H]
\centering
\begin{tikzpicture}[scale=20/620]
\draw ($(90:2.75)$) --  ($(90+120:2.75)$) -- ($(90+120*2:2.75)$) -- ($(90:2.75)$) ($(90:2.75)+(120*2:2)$) to ++ ($(0:2)$) ($(90+120:2.75)+(0:2)$) to ++ ($(120:2)$)  ($(90+120*2:2.75)+(120:2)$) to ++ ($(120*2:2)$) ;
\draw ($(90:2.75)$) -- ($(90+120:2.75)+(60:6)$) ($(90+120*2:2.75)$) -- ($(90+120:2.75)+(0:6)$) ($(90:2.75)+(0:6)$) -- ($(90+120*2:2.75)+(60:6)$);
\foreach \a in {0,60} {
\draw ($(90:2.75)+(\a:6)$) --  ($(90+120:2.75)+(\a:6)$) -- ($(90+120*2:2.75)+(\a:6)$) -- ($(90:2.75)+(\a:6)$) ($(90:2.75)+(120*2:2)+(\a:6)$) to ++ ($(0:2)$) ($(90+120:2.75)+(0:2)+(\a:6)$) to ++ ($(120:2)$)  ($(90+120*2:2.75)+(120:2)+(\a:6)$) to ++ ($(120*2:2)$);
}
\draw ($(90:2.75)+(60:6)$) -- ($(90+120:2.75)+(60:6)+(60:13)$) ($(90+120*2:2.75)+(0:6)$) -- ($(90+120:2.75)+(0:6)+(0:13)$) ($(90:2.75)+(60:6)+(0:13)$) -- ($(90+120*2:2.75)+(0:6)+(60:13)$);
\foreach \b in {0,60} {
\draw ($(90:2.75)+(\b:13)$) --  ($(90+120:2.75)+(\b:13)$) -- ($(90+120*2:2.75)+(\b:13)$) -- ($(90:2.75)+(\b:13)$) ($(90:2.75)+(120*2:2)+(\b:13)$) to ++ ($(0:2)$) ($(90+120:2.75)+(0:2)+(\b:13)$) to ++ ($(120:2)$)  ($(90+120*2:2.75)+(120:2)+(\b:13)$) to ++ ($(120*2:2)$) ;
\draw ($(90:2.75)+(\b:13)$) -- ($(90+120:2.75)+(60:6)+(\b:13)$) ($(90+120*2:2.75)+(\b:13)$) -- ($(90+120:2.75)+(0:6)+(\b:13)$) ($(90:2.75)+(0:6)+(\b:13)$) -- ($(90+120*2:2.75)+(60:6)+(\b:13)$);
\foreach \a in {0,60} {
\draw ($(90:2.75)+(\a:6)+(\b:13)$) --  ($(90+120:2.75)+(\a:6)+(\b:13)$) -- ($(90+120*2:2.75)+(\a:6)+(\b:13)$) -- ($(90:2.75)+(\a:6)+(\b:13)$) ($(90:2.75)+(120*2:2)+(\a:6)+(\b:13)$) to ++ ($(0:2)$) ($(90+120:2.75)+(0:2)+(\a:6)+(\b:13)$) to ++ ($(120:2)$)  ($(90+120*2:2.75)+(120:2)+(\a:6)+(\b:13)$) to ++ ($(120*2:2)$) ;
}
}
\foreach \c in {0,60} {
\draw ($(90:2.75)+(\c:28)$) --  ($(90+120:2.75)+(\c:28)$) -- ($(90+120*2:2.75)+(\c:28)$) -- ($(90:2.75)+(\c:28)$) ($(90:2.75)+(120*2:2)+(\c:28)$) to ++ ($(0:2)$) ($(90+120:2.75)+(0:2)+(\c:28)$) to ++ ($(120:2)$)  ($(90+120*2:2.75)+(120:2)+(\c:28)$) to ++ ($(120*2:2)$) ;
\draw ($(90:2.75)+(\c:28)$) -- ($(90+120:2.75)+(60:6)+(\c:28)$) ($(90+120*2:2.75)+(\c:28)$) -- ($(90+120:2.75)+(0:6)+(\c:28)$) ($(90:2.75)+(0:6)+(\c:28)$) -- ($(90+120*2:2.75)+(60:6)+(\c:28)$);
\foreach \a in {0,60} {
\draw ($(90:2.75)+(\a:6)+(\c:28)$) --  ($(90+120:2.75)+(\a:6)+(\c:28)$) -- ($(90+120*2:2.75)+(\a:6)+(\c:28)$) -- ($(90:2.75)+(\a:6)+(\c:28)$) ($(90:2.75)+(120*2:2)+(\a:6)+(\c:28)$) to ++ ($(0:2)$) ($(90+120:2.75)+(0:2)+(\a:6)+(\c:28)$) to ++ ($(120:2)$)  ($(90+120*2:2.75)+(120:2)+(\a:6)+(\c:28)$) to ++ ($(120*2:2)$);
}
\draw ($(90:2.75)+(60:6)+(\c:28)$) -- ($(90+120:2.75)+(60:6)+(60:13)+(\c:28)$) ($(90+120*2:2.75)+(0:6)+(\c:28)$) -- ($(90+120:2.75)+(0:6)+(0:13)+(\c:28)$) ($(90:2.75)+(60:6)+(0:13)+(\c:28)$) -- ($(90+120*2:2.75)+(0:6)+(60:13)+(\c:28)$);
\foreach \b in {0,60} {
\draw ($(90:2.75)+(\b:13)+(\c:28)$) --  ($(90+120:2.75)+(\b:13)+(\c:28)$) -- ($(90+120*2:2.75)+(\b:13)+(\c:28)$) -- ($(90:2.75)+(\b:13)+(\c:28)$) ($(90:2.75)+(120*2:2)+(\b:13)+(\c:28)$) to ++ ($(0:2)$) ($(90+120:2.75)+(0:2)+(\b:13)+(\c:28)$) to ++ ($(120:2)$)  ($(90+120*2:2.75)+(120:2)+(\b:13)+(\c:28)$) to ++ ($(120*2:2)$) ;
\draw ($(90:2.75)+(\b:13)+(\c:28)$) -- ($(90+120:2.75)+(60:6)+(\b:13)+(\c:28)$) ($(90+120*2:2.75)+(\b:13)+(\c:28)$) -- ($(90+120:2.75)+(0:6)+(\b:13)+(\c:28)$) ($(90:2.75)+(0:6)+(\b:13)+(\c:28)$) -- ($(90+120*2:2.75)+(60:6)+(\b:13)+(\c:28)$);
\foreach \a in {0,60} {
\draw ($(90:2.75)+(\a:6)+(\b:13)+(\c:28)$) --  ($(90+120:2.75)+(\a:6)+(\b:13)+(\c:28)$) -- ($(90+120*2:2.75)+(\a:6)+(\b:13)+(\c:28)$) -- ($(90:2.75)+(\a:6)+(\b:13)+(\c:28)$) ($(90:2.75)+(120*2:2)+(\a:6)+(\b:13)+(\c:28)$) to ++ ($(0:2)$) ($(90+120:2.75)+(0:2)+(\a:6)+(\b:13)+(\c:28)$) to ++ ($(120:2)$)  ($(90+120*2:2.75)+(120:2)+(\a:6)+(\b:13)+(\c:28)$) to ++ ($(120*2:2)$) ;
}
}
\draw ($(90:2.75)+(60:6)+(60:13)$) -- ($(90+120:2.75)+(60:6)+(60:13)+(60:28)$) ($(90+120:2.75)+(0:6)+(0:13)$) -- ($(90+120:2.75)+(0:6)+(0:13)+(0:28)$) ($(90:2.75)+(60:6)+(0:13)+(0:28)$) -- ($(90+120*2:2.75)+(0:6)+(0:13)+(60:28)$);
}
\foreach \d in {0,60} {
\draw ($(90:2.75)+(\d:60)$) --  ($(90+120:2.75)+(\d:60)$) -- ($(90+120*2:2.75)+(\d:60)$) -- ($(90:2.75)+(\d:60)$) ($(90:2.75)+(120*2:2)+(\d:60)$) to ++ ($(0:2)$) ($(90+120:2.75)+(0:2)+(\d:60)$) to ++ ($(120:2)$)  ($(90+120*2:2.75)+(120:2)+(\d:60)$) to ++ ($(120*2:2)$) ;
\draw ($(90:2.75)+(\d:60)$) -- ($(90+120:2.75)+(60:6)+(\d:60)$) ($(90+120*2:2.75)+(\d:60)$) -- ($(90+120:2.75)+(0:6)+(\d:60)$) ($(90:2.75)+(0:6)+(\d:60)$) -- ($(90+120*2:2.75)+(60:6)+(\d:60)$);
\foreach \a in {0,60} {
\draw ($(90:2.75)+(\a:6)+(\d:60)$) --  ($(90+120:2.75)+(\a:6)+(\d:60)$) -- ($(90+120*2:2.75)+(\a:6)+(\d:60)$) -- ($(90:2.75)+(\a:6)+(\d:60)$) ($(90:2.75)+(120*2:2)+(\a:6)+(\d:60)$) to ++ ($(0:2)$) ($(90+120:2.75)+(0:2)+(\a:6)+(\d:60)$) to ++ ($(120:2)$)  ($(90+120*2:2.75)+(120:2)+(\a:6)+(\d:60)$) to ++ ($(120*2:2)$);
}
\draw ($(90:2.75)+(60:6)+(\d:60)$) -- ($(90+120:2.75)+(60:6)+(60:13)+(\d:60)$) ($(90+120*2:2.75)+(0:6)+(\d:60)$) -- ($(90+120:2.75)+(0:6)+(0:13)+(\d:60)$) ($(90:2.75)+(60:6)+(0:13)+(\d:60)$) -- ($(90+120*2:2.75)+(0:6)+(60:13)+(\d:60)$);
\foreach \b in {0,60} {
\draw ($(90:2.75)+(\b:13)+(\d:60)$) --  ($(90+120:2.75)+(\b:13)+(\d:60)$) -- ($(90+120*2:2.75)+(\b:13)+(\d:60)$) -- ($(90:2.75)+(\b:13)+(\d:60)$) ($(90:2.75)+(120*2:2)+(\b:13)+(\d:60)$) to ++ ($(0:2)$) ($(90+120:2.75)+(0:2)+(\b:13)+(\d:60)$) to ++ ($(120:2)$)  ($(90+120*2:2.75)+(120:2)+(\b:13)+(\d:60)$) to ++ ($(120*2:2)$) ;
\draw ($(90:2.75)+(\b:13)+(\d:60)$) -- ($(90+120:2.75)+(60:6)+(\b:13)+(\d:60)$) ($(90+120*2:2.75)+(\b:13)+(\d:60)$) -- ($(90+120:2.75)+(0:6)+(\b:13)+(\d:60)$) ($(90:2.75)+(0:6)+(\b:13)+(\d:60)$) -- ($(90+120*2:2.75)+(60:6)+(\b:13)+(\d:60)$);
\foreach \a in {0,60} {
\draw ($(90:2.75)+(\a:6)+(\b:13)+(\d:60)$) --  ($(90+120:2.75)+(\a:6)+(\b:13)+(\d:60)$) -- ($(90+120*2:2.75)+(\a:6)+(\b:13)+(\d:60)$) -- ($(90:2.75)+(\a:6)+(\b:13)+(\d:60)$) ($(90:2.75)+(120*2:2)+(\a:6)+(\b:13)+(\d:60)$) to ++ ($(0:2)$) ($(90+120:2.75)+(0:2)+(\a:6)+(\b:13)+(\d:60)$) to ++ ($(120:2)$)  ($(90+120*2:2.75)+(120:2)+(\a:6)+(\b:13)+(\d:60)$) to ++ ($(120*2:2)$) ;
}
}
\foreach \c in {0,60} {
\draw ($(90:2.75)+(\c:28)+(\d:60)$) --  ($(90+120:2.75)+(\c:28)+(\d:60)$) -- ($(90+120*2:2.75)+(\c:28)+(\d:60)$) -- ($(90:2.75)+(\c:28)+(\d:60)$) ($(90:2.75)+(120*2:2)+(\c:28)+(\d:60)$) to ++ ($(0:2)$) ($(90+120:2.75)+(0:2)+(\c:28)+(\d:60)$) to ++ ($(120:2)$)  ($(90+120*2:2.75)+(120:2)+(\c:28)+(\d:60)$) to ++ ($(120*2:2)$) ;
\draw ($(90:2.75)+(\c:28)+(\d:60)$) -- ($(90+120:2.75)+(60:6)+(\c:28)+(\d:60)$) ($(90+120*2:2.75)+(\c:28)+(\d:60)$) -- ($(90+120:2.75)+(0:6)+(\c:28)+(\d:60)$) ($(90:2.75)+(0:6)+(\c:28)+(\d:60)$) -- ($(90+120*2:2.75)+(60:6)+(\c:28)+(\d:60)$);
\foreach \a in {0,60} {
\draw ($(90:2.75)+(\a:6)+(\c:28)+(\d:60)$) --  ($(90+120:2.75)+(\a:6)+(\c:28)+(\d:60)$) -- ($(90+120*2:2.75)+(\a:6)+(\c:28)+(\d:60)$) -- ($(90:2.75)+(\a:6)+(\c:28)+(\d:60)$) ($(90:2.75)+(120*2:2)+(\a:6)+(\c:28)+(\d:60)$) to ++ ($(0:2)$) ($(90+120:2.75)+(0:2)+(\a:6)+(\c:28)+(\d:60)$) to ++ ($(120:2)$)  ($(90+120*2:2.75)+(120:2)+(\a:6)+(\c:28)+(\d:60)$) to ++ ($(120*2:2)$);
}
\draw ($(90:2.75)+(60:6)+(\c:28)+(\d:60)$) -- ($(90+120:2.75)+(60:6)+(60:13)+(\c:28)+(\d:60)$) ($(90+120*2:2.75)+(0:6)+(\c:28)+(\d:60)$) -- ($(90+120:2.75)+(0:6)+(0:13)+(\c:28)+(\d:60)$) ($(90:2.75)+(60:6)+(0:13)+(\c:28)+(\d:60)$) -- ($(90+120*2:2.75)+(0:6)+(60:13)+(\c:28)+(\d:60)$);
\foreach \b in {0,60} {
\draw ($(90:2.75)+(\b:13)+(\c:28)+(\d:60)$) --  ($(90+120:2.75)+(\b:13)+(\c:28)+(\d:60)$) -- ($(90+120*2:2.75)+(\b:13)+(\c:28)+(\d:60)$) -- ($(90:2.75)+(\b:13)+(\c:28)+(\d:60)$) ($(90:2.75)+(120*2:2)+(\b:13)+(\c:28)+(\d:60)$) to ++ ($(0:2)$) ($(90+120:2.75)+(0:2)+(\b:13)+(\c:28)+(\d:60)$) to ++ ($(120:2)$)  ($(90+120*2:2.75)+(120:2)+(\b:13)+(\c:28)+(\d:60)$) to ++ ($(120*2:2)$) ;
\draw ($(90:2.75)+(\b:13)+(\c:28)+(\d:60)$) -- ($(90+120:2.75)+(60:6)+(\b:13)+(\c:28)+(\d:60)$) ($(90+120*2:2.75)+(\b:13)+(\c:28)+(\d:60)$) -- ($(90+120:2.75)+(0:6)+(\b:13)+(\c:28)+(\d:60)$) ($(90:2.75)+(0:6)+(\b:13)+(\c:28)+(\d:60)$) -- ($(90+120*2:2.75)+(60:6)+(\b:13)+(\c:28)+(\d:60)$);
\foreach \a in {0,60} {
\draw ($(90:2.75)+(\a:6)+(\b:13)+(\c:28)+(\d:60)$) --  ($(90+120:2.75)+(\a:6)+(\b:13)+(\c:28)+(\d:60)$) -- ($(90+120*2:2.75)+(\a:6)+(\b:13)+(\c:28)+(\d:60)$) -- ($(90:2.75)+(\a:6)+(\b:13)+(\c:28)+(\d:60)$) ($(90:2.75)+(120*2:2)+(\a:6)+(\b:13)+(\c:28)+(\d:60)$) to ++ ($(0:2)$) ($(90+120:2.75)+(0:2)+(\a:6)+(\b:13)+(\c:28)+(\d:60)$) to ++ ($(120:2)$)  ($(90+120*2:2.75)+(120:2)+(\a:6)+(\b:13)+(\c:28)+(\d:60)$) to ++ ($(120*2:2)$) ;
}
}
\draw ($(90:2.75)+(60:6)+(60:13)+(\d:60)$) -- ($(90+120:2.75)+(60:6)+(60:13)+(60:28)+(\d:60)$) ($(90+120:2.75)+(0:6)+(0:13)+(\d:60)$) -- ($(90+120:2.75)+(0:6)+(0:13)+(0:28)+(\d:60)$) ($(90:2.75)+(60:6)+(0:13)+(0:28)+(\d:60)$) -- ($(90+120*2:2.75)+(0:6)+(0:13)+(60:28)+(\d:60)$);
}
\draw ($(90:2.75)+(60:6)+(60:13)+(60:28)$) -- ($(90+120:2.75)+(60:6)+(60:13)+(60:28)+(60:20)$);
\draw ($(90+120:2.75)+(0:6)+(0:13)+(0:30)$) -- ($(90+120:2.75)+(0:6)+(0:13)+(0:28)+(0:20)$);
\draw ($(90:2.75)+(60:6)+(0:13)+(0:28)+(0:60)$) -- ($(90+120*2:2.75)+(0:6)+(0:13)+(0:28)+(60:60)$);
}
\end{tikzpicture}
\caption{The Hanoi attractor or stretched Sierpinski gasket.}
\label{fig:SSG}
\end{figure}

The main contribution of this paper consists in showing that there are in fact many situations where a rather natural resistance form only mirrors part of the fractal upon which it is constructed. We study in detail the case of a finitely ramified fractal $\rm H$ whose intrinsic structure is based on the $3$-level Sierpinski gasket, the interval and the regular inverted Sierpinski gasket. A precise description of this set is given in Section~\ref{section:SG3}. 

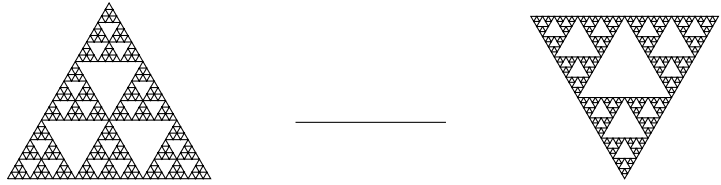
\begin{figure}[H]
\centering
\begin{tabular}{ccc}
\begin{tikzpicture}[scale=.1]
\foreach \d in {0,1}{
\foreach \e in {1,2}{

\foreach \a in {0,1} {
\foreach \b in {1,2} {
\draw ($(60*\d:9*\e)+(60*\a:3*\b)$) to ++ ($(0:3)$) to ++ ($(120:3)$) to ++ ($(240:1)$) to++ ($(0:1)$) to++ ($(240:1)$) to++ ($(0:1)$) to ++ ($(240:1)$) to++ ($(120:2)$) to++ ($(240:1)$) to++ ($(0:1)$) to++  ($(240:1)$) to ++($(120:1)$) to++ ($(240:1)$);
}
}
\foreach \c in {0,1} {
\draw ($(60*\d:9*\e)+(0:3*\c)+(60:3*\c)$) to ++ ($(0:3)$) to ++ ($(120:3)$) to ++ ($(240:1)$) to++ ($(0:1)$) to++ ($(240:1)$) to++ ($(0:1)$) to ++ ($(240:1)$) to++ ($(120:2)$) to++ ($(240:1)$) to++ ($(0:1)$) to++  ($(240:1)$) to ++($(120:1)$) to++ ($(240:1)$);
}
}
}
\foreach \f in {0,1} {
\foreach \a in {0,1} {
\foreach \b in {1,2} {
\draw ($(60*\a:3*\b)+(0:9*\f)+(60:9*\f)$) to ++ ($(0:3)$) to ++ ($(120:3)$) to ++ ($(240:1)$) to++ ($(0:1)$) to++ ($(240:1)$) to++ ($(0:1)$) to ++ ($(240:1)$) to++ ($(120:2)$) to++ ($(240:1)$) to++ ($(0:1)$) to++  ($(240:1)$) to ++($(120:1)$) to++ ($(240:1)$);
}
}
\foreach \c in {0,1} {
\draw ($(0:3*\c)+(60:3*\c)+(0:9*\f)+(60:9*\f)$) to ++ ($(0:3)$) to ++ ($(120:3)$) to ++ ($(240:1)$) to++ ($(0:1)$) to++ ($(240:1)$) to++ ($(0:1)$) to ++ ($(240:1)$) to++ ($(120:2)$) to++ ($(240:1)$) to++ ($(0:1)$) to++  ($(240:1)$) to ++($(120:1)$) to++ ($(240:1)$);
}
}
\end{tikzpicture}
&
\hspace*{.25in}
\begin{tikzpicture}[scale=.5]
\draw ($(0:2)$) --  ($(180:2)$);
\draw[white] ($(270:1.5)$) to++ ($(0:1)$);
\end{tikzpicture}
&
\hspace*{.25in}
\begin{tikzpicture}[scale=.045]
\foreach \c in {0,1,2} {
\foreach \b in {0,1,2} {
\foreach \a in {0,1,2} {
\foreach \e in {0,1,2} {
\foreach \f in {0,1,2} {
\draw[] ($(270+120*\f:16)+(270+120*\e:8) + (9270+120*\c:4)+(270+120*\b:2)+(270+120*\a:1)+(270:1)$) -- ($(270+120*\f:16)+(270+120*\e:8) + (270+120*\c:4)+(270+120*\b:2)+(270+120*\a:1)+(30:1)$) -- ($(270+120*\f:16)+(270+120*\e:8) + (270+120*\c:4)+(270+120*\b:2)+(270+120*\a:1)+(150: 1)$)--cycle; 
}
}
}
}
}
\end{tikzpicture}
\end{tabular}

\caption{\small $3$-level Sierpinski gasket, interval and regular inverted Sierpinski gasket.}
\label{fig:intro01}
\end{figure} 

On $\rm H$ we can construct a resistance form $(\E,\dom\E)$ with a certain self-similarity property, c.f. Lemma~\ref{lemma:SG3.01}, that is essentially a countable sum of energies (resistance forms) on intervals $\I_\alpha$ and inverted Sierpinski gaskets $\SG_\alpha$. More precisely, for any $u\in\dom\E$ we have that
\begin{equation}\label{eq:intro01}
\E(u,u)=\sum_{k=0}^\infty\sum_{\alpha\in\mcA_k}\Big(\sum_{\I\in J}\E_k^{\I_\alpha}(u,u)+\E_k^{\SG_\alpha}(u,u)\Big),
\end{equation}
where $J$ denotes the set of intervals that appear at the first level approximation of $\rm H$. This expression provides a resistance form that effectively overlooks the underlying $3$-level Sierpinski gasket structure of the set. Notice that it will not be enough that the sum~\eqref{eq:intro01} is finite for a function on $\rm H$ in order to be in the domain of the energy $\E$. Functions in the domain of $\E$ also need to be continuous. This continuity assumption is enforced by our definition of the domain, see Definition~\ref{def:SG3.03} and Remark~\ref{rem:SG3.04}.

\medskip

The same construction extends to a class of finitely ramified fractals that we introduce in Section~\ref{section:HD3S} and more generally in Section~\ref{section:HF}, which we call \textit{p.c.f.\ hybrids}. From the geometric point of view, these sets can be regarded as graph-directed fractals~\cite{MW88,HN03}. They need not be strictly self-similar but generated through a non-trivial mechanism that combines several post-critically finite (p.c.f.) self-similar pieces. Again, we can find resistance forms that do not reflect the whole intrinsic fractal nature of the set on which they are defined. 

\medskip

Besides, p.c.f.\ hybrids can be equipped with a \textit{finitely ramified cell structure}, a concept described in~\cite{Tep08} that appears in numerous applications, specially in the fractal setting~\cite{Str01,RT10,IRT12}. In Section~\ref{section:FRCS} we investigate the question about the characterization of the domain of general resistance forms within this abstract framework, setting aside matters of existence and uniqueness that are known to be rather delicate; see e.g.~\cite{HMT06} and references therein. For finitely ramified cell structures that support a fractal dust, Theorem~\ref{thm:MR02} provides necessary and sufficient conditions under which a given resistance form fails to ``detect'' the underlying fractal dust.

\medskip

From the theory of resistance forms~\cite{Kig01}, we know that a resistance form on a hybrid fractal equipped with a Borel regular measure gives rise to a Dirichlet form and hence to a Laplacian. Finding out properties of the spectrum of the Laplacian in the fractal setting is a problem with long history in the literature, see e.g.~\cite{FS92,KL93,BST99,ASST03,BKS13,SZ17} and the second part of this paper is devoted to the investigation of the spectrum of the Laplacian on the Hanoi attractor and on the hybrid $\rm H$ previously mentioned and studied in detail in Section~\ref{section:SG3}.

\medskip

To analyze the spectrum of the Laplacian on the Hanoi attractor we present two different approaches: the first one relies on approximation by discrete graphs and their associated discrete Laplacians. Numerical computations of the spectrum, the eigenvalue counting function and simulations of eigenfunctions are provided. The second approach is based in the approximation by quantum graphs introduced in~\cite{ARKT16}. We compare the numerical evidence obtained from both methods and refer to the reader to the website~\cite{CGSZ17} for more data. An analogous study is presented for the hybrid $\rm H$ based on the $3$-level Sierpinski gasket for which further data is also available at~\cite{CGSZ17}. In this case we also obtain the asymptotic behavior of the corresponding eigenvalue counting function.

\medskip

The outline of the paper is the following. Taking the hybrid fractal $\rm H$ as an example, we introduce in Section~\ref{section:SG3} the concept of p.c.f.\ hybrid fractals and work out the details that lead in Theorem~\ref{thm:SG3.02} to a characterizable resistance form on it. These ideas are further developed in Section~\ref{section:HD3S} for the case of self-similar energies on hybrids with an underlying dihedral-$3$ symmetry. Section~\ref{section:FRCS} treats the characterization problem in the abstract setting of finitely ramified cell structures, removing the self-similarity property of the resistance forms in consideration. A general definition of p.c.f.\ hybrid fractals along with an associated finitely ramified cell structure is established in Section~\ref{section:HF}, where Theorem~\ref{thm:MR02} is applied to the case of the Hanoi attractor. Section~\ref{section:SpectrumHanoi} provides a numerical study of the spectrum of the Laplacians induced by the self-similar energy on the Hanoi attractor, while Section~\ref{sec:SpcSG3} discusses the spectral properties of the corresponding Laplacian on the hybrid $\rm H$ based on the $3$-level Sierpinski gasket.
\subsection*{Acknowledgments} The first author thanks A.\ Teplyaev for many fruitful discussions and the Feodor Lynen fellowship from the Humboldt Foundation for partial support. We are also very thankful to the University of Hong Kong for the support provided to Y.\ Chen and H.\ Gu.
\section{Characterizable energy on a hybrid fractal}\label{section:SG3}
This section presents a motivating example of a hybrid fractal where we can establish the existence of an energy (resistance form) with the particular property that it ``captures'' only part of the intrinsic structure of the fractal. 

\medskip

The hybrid fractal that we discuss here is based on $3$-level Sierpinski gasket, a self-similar set introduced in~\cite{Str00} that we denote by $\SG_3$, see Figure~\ref{fig:SG3.01}. 

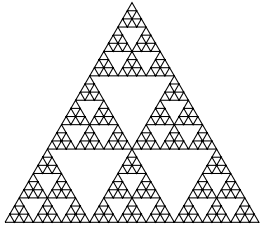
\begin{figure}[H]
\centering
\begin{tikzpicture}[scale=.125]
\foreach \d in {0,1}{
\foreach \e in {1,2}{

\foreach \a in {0,1} {
\foreach \b in {1,2} {
\draw ($(60*\d:9*\e)+(60*\a:3*\b)$) to ++ ($(0:3)$) to ++ ($(120:3)$) to ++ ($(240:1)$) to++ ($(0:1)$) to++ ($(240:1)$) to++ ($(0:1)$) to ++ ($(240:1)$) to++ ($(120:2)$) to++ ($(240:1)$) to++ ($(0:1)$) to++  ($(240:1)$) to ++($(120:1)$) to++ ($(240:1)$);
}
}
\foreach \c in {0,1} {
\draw ($(60*\d:9*\e)+(0:3*\c)+(60:3*\c)$) to ++ ($(0:3)$) to ++ ($(120:3)$) to ++ ($(240:1)$) to++ ($(0:1)$) to++ ($(240:1)$) to++ ($(0:1)$) to ++ ($(240:1)$) to++ ($(120:2)$) to++ ($(240:1)$) to++ ($(0:1)$) to++  ($(240:1)$) to ++($(120:1)$) to++ ($(240:1)$);
}
}
}
\foreach \f in {0,1} {
\foreach \a in {0,1} {
\foreach \b in {1,2} {
\draw ($(60*\a:3*\b)+(0:9*\f)+(60:9*\f)$) to ++ ($(0:3)$) to ++ ($(120:3)$) to ++ ($(240:1)$) to++ ($(0:1)$) to++ ($(240:1)$) to++ ($(0:1)$) to ++ ($(240:1)$) to++ ($(120:2)$) to++ ($(240:1)$) to++ ($(0:1)$) to++  ($(240:1)$) to ++($(120:1)$) to++ ($(240:1)$);
}
}
\foreach \c in {0,1} {
\draw ($(0:3*\c)+(60:3*\c)+(0:9*\f)+(60:9*\f)$) to ++ ($(0:3)$) to ++ ($(120:3)$) to ++ ($(240:1)$) to++ ($(0:1)$) to++ ($(240:1)$) to++ ($(0:1)$) to ++ ($(240:1)$) to++ ($(120:2)$) to++ ($(240:1)$) to++ ($(0:1)$) to++  ($(240:1)$) to ++($(120:1)$) to++ ($(240:1)$);
}
}
\end{tikzpicture}
\caption{The $3$-level Sierpinski gasket.}
\label{fig:SG3.01}
\end{figure} 

To construct the hybrid we proceed as follows: starting with the first level approximation of $\SG_3$, substitute each junction point by a line segment or an inverted Sierpinski gasket ($\SG$) according to Figure~\ref{fig:SG3.02}. At each new level, existing line segments and inverted $\SG$s remain, while triangular cells subdivide as before: each new junction point of the first level approximation of $\SG_3$ is substituted by (a smaller copy of) an interval or an inverted $\SG$. In general, we will loosely speak of a triangular $n$-cell as a triangular cell that appears in the $n$th construction level .
\begin{figure}[H]
\centering
\begin{tikzpicture}[scale=.8]
\draw ($(270:2.375)$) node[] {\includegraphics[angle=180,origin=c,scale=0.015]{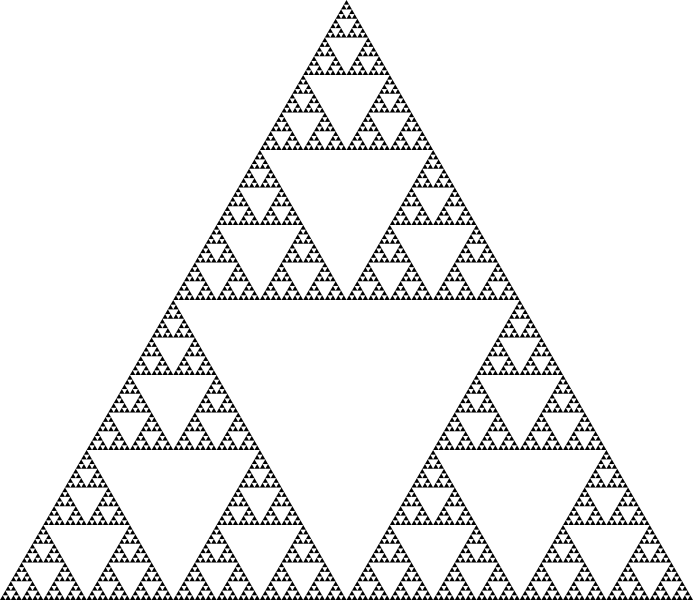}};
\filldraw ($(240:4)$) to ++ ($(0:1)$) to ++ ($(120:1)$) to ++ ($(240:1)$);
\filldraw ($(240:4)+(0:1)$) to ++ ($(0:.5)$) to ++ ($(0:1)$) to ++ ($(120:1)$) to ++ ($(240:1)$);
\filldraw ($(240:4)+(0:2.5)$) to ++ ($(0:.5)$) to ++ ($(0:1)$) to ++ ($(120:1)$) to ++ ($(240:1)$);
\filldraw ($(240:4)+(60:1.5)$) to ++ ($(0:1)$) to ++ ($(120:1)$) to ++ ($(240:1.5)$);
\filldraw ($(240:4)+(60:3)$) to ++ ($(0:1)$) to ++ ($(120:1)$)  to ++ ($(240:1.5)$);
\filldraw ($(240:4)+(60:1.5)+(0:1.5)$) to ++ ($(0:1)$) to ++ ($(120:1)$)  to ++ ($(240:1)$);
\filldraw ($(300:3)$) to ++ ($(120:.5)$) ($(300:1.5)$) to ++ ($(120:.5)$);
\filldraw ($(240:2.5)$) node[left] {\small $\I$} ($(240:1)$) node[left] {\small $\I$} ($(300:2.5)$) node[right] {\small $\I$} ($(300:1)$) node[right] {\small $\I$} ($(258:3.65)$) node[above] {\small $\I$} ($(283:3.65)$) node[above] {\small $\I$} ($(270:2.25)$) node[above] {$\scriptscriptstyle{\SG}$};
\end{tikzpicture}
\caption{\small Line segments and one inverted $\SG$ join the six triangular $1$-cells of the first level approximation of $\SG_3$.}
\label{fig:SG3.02}
\end{figure} 
\begin{notation}
Let $J$ be the set of all line segments and inverted Sierpinski gaskets ``born'' in the first generation.
\end{notation}

\begin{definition}\label{def:SG3.01}
Let $\{\phi_i\colon\mbbR^2\to\mbbR^2\}_{i=1}^6$ be the $s$-similitudes, $s\in (0,1)$, that map the unit equilateral triangle to the respective copies in Figure~\ref{fig:SG3.02}.
\begin{enumerate}[leftmargin=.25in]
\item[(i)] The hybrid fractal $\rm H\subseteq\mbbR^2$ is the unique non-empty compact set that satisfies
\begin{equation*}\label{eq:SG3.001}
{\rm H}=\bigcup_{i=1}^6\phi_i({\rm H})\cup\bigcup_{K\in J}K.
\end{equation*}
\item[(ii)]  The the \textit{fractal dust} associated with $\rm H$ is the self-similar set that satisfies
\begin{equation*}\label{eq:SG3.01}
\mcC_{\rm H}=\bigcup_{i=1}^6\phi_i(\mcC_{\rm H}).
\end{equation*} 
\item[(iii)] For each word of length $n\geq 1$, $\alpha=\alpha_1\cdots\alpha_n\in\mcA_n:=\{1,\ldots, 6\}^n$ define
\begin{equation*}\label{eq:SG3.phi_alpha}
\phi_\alpha=\phi_{\alpha_1}\circ\cdots\circ\phi_{\alpha_n}
\end{equation*}
and $\phi_{\text{\o}}={\rm id}$. A set $\phi_\alpha({\rm H})$ is called an $n$-cell of ${\rm H}$.
\end{enumerate}
\end{definition}

For existence and uniqueness of these sets we refer to~\cite{Hat85}.

\medskip

The hybrid $\rm H$ can also be described as a graph-directed fractal, see~\cite{MW88,HN03}. It arises by recursively replacing triangles and lines with the corresponding combinations depicted in Figure~\ref{fig:SG3.03}. In this case, the underlying division of the base generator, the upright triangle, relies on the transformation of Figure~\ref{fig:SG3.02}. The other fractals constituting the hybrid are the interval  and the (inverted) $\SG$. 

\begin{figure}[H]
\centering
\begin{tabular}{ccc}
\begin{tikzpicture}[scale=.5]
\filldraw ($(240:4)$) 
--  ($(240+60:4)$) 
 -- ($(240:4)+(60:4)$) 
-- cycle;
\end{tikzpicture}
\hspace*{.5in}
&
\begin{tikzpicture}[scale=.5]
\filldraw ($(60:4)$) 
--  ($(120:4)$) 
 -- ($(120:4)+(300:4)$) 
-- cycle;
\end{tikzpicture}
\hspace*{.5in}
&
\begin{tikzpicture}[scale=.5]
\draw[very thick] ($(0:2)$) 
--  ($(180:2)$) 
;
\draw[white] ($(270:1.5)$) to++ ($(0:1)$);
\end{tikzpicture}
\\
& &\\
$\rm H\downarrow$\hspace*{.7in} &$\SG\downarrow$\hspace*{.8in} &$\I\downarrow$\hspace*{.15in} \\
& \hspace*{.55in}&\hspace*{.55in}\\
\begin{tikzpicture}[scale=.5]
\filldraw ($(240:4)$) to ++ ($(0:1)$) to ++ ($(120:1)$);
\filldraw ($(240:4)+(0:1.5)$) to ++ ($(0:1)$) to ++ ($(120:1)$);
\filldraw ($(240:4)+(0:3)$) to ++ ($(0:1)$) to ++ ($(120:1)$);
\filldraw ($(240:4)+(60:1.5)$) to ++ ($(0:1)$) to ++ ($(120:1)$);
\filldraw ($(240:4)+(60:3)$) to ++ ($(0:1)$) to ++ ($(120:1)$);
\filldraw ($(240:4)+(60:1.5)+(0:1.5)$) to ++ ($(0:1)$) to ++ ($(120:1)$);
\filldraw ($(240:4)+(60:1.5)+(0:1)$) to ++ ($(0:.5)$) to ++ ($(240:.5)$);
\draw[thick] ($(240:4)$) to ++ ($(0:4)$) to ++ ($(120:4)$) -- cycle;
\end{tikzpicture}
\hspace*{.5in}
&
\begin{tikzpicture}[scale=.5]
\filldraw ($(60:4)$) to ++ ($(180:2)$) to ++ ($(300:2)$)-- cycle;
\filldraw ($(120:4)$) to ++ ($(0:2)$) to ++ ($(240:2)$)-- cycle;
\filldraw ($(0:0)$) to ++ ($(60:2)$) to ++ ($(180:2)$)-- cycle;
\end{tikzpicture}
\hspace*{.5in}
&
\begin{tikzpicture}[scale=.5]
\draw[very thick] ($(0:2)$) 
--  ($(180:2)$) 
;
\draw[white] ($(270:1.5)$) to++ ($(0:1)$);
\end{tikzpicture}
\end{tabular}
\caption{\small Generators of the hybrid fractal $\rm H$ based on the the upright triangle.}
\label{fig:SG3.03}
\end{figure}
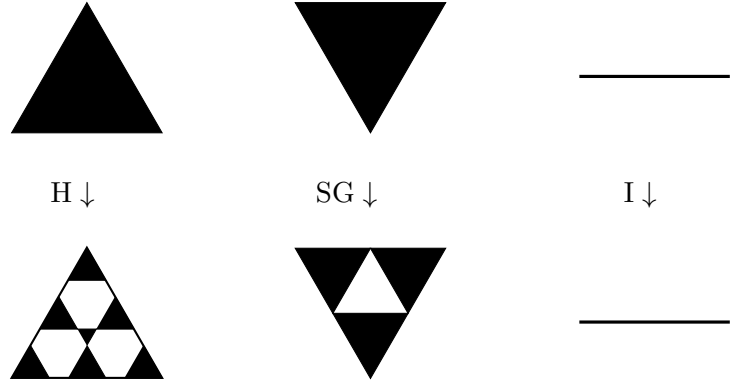
\begin{remark}\label{rem:SG3.01}
The hybrid fractal $\rm H$ is \textit{finitely ramified} because any subset of it can be disconnected by removing finitely many points. Its boundary $V_0$ consists of the three vertices of the base upright triangle.
\end{remark}
\subsection{Energy from approximating graphs}\label{section:SG3.1}
An energy (resistance) form on $\rm H$ can be obtained as the limit of a sequence of resistance forms associated with suitable weighted graphs $\{\Gamma_n\}_{n\geq 0}$ that approximate the hybrid. 

\medskip

The construction scheme for the hybrid yields a recursive procedure to define the graphs $\Gamma_n$: at each new level $n$, edges that built an upright triangle in level $n-1$ subdivides as $\rm H$ and edges that joined upright triangles subdivide either into two, or as an inverted Sierpinski gasket, see Figure~\ref{fig:SG3.04}. In addition, each type of subdivision carries specific \textit{resistance scaling factors}. The \textit{resistance} (i.e. the weight) of a $n$th-level edge is obtained by multiplying the resistance of the $(n-1)$th-level edge it comes from by the resistance scaling associated to the $n$th-level subdivision it performs. 

\medskip

For each $n\geq 0$, the graph $\Gamma_n=(V_n,E_n,r_n)$ induces a resistance form given by
\begin{equation}\label{eq:SG3.02}
\sum_{\{x,y\}\in E_n}\frac{1}{r_n(x,y)}(u(x)-u(y))(v(x)-v(y))
\end{equation}
for any $u,v\in\ell(V_n):=\{u\colon V_n\to\mbbR\}$. In this particular example, we will choose the same resistance scaling factors for every level and give each subdivision type the ones described in Figure~\ref{fig:SG3.03}. Based on~\cite[Lemma 5.1]{HN03}, we will call the resulting resistance form a \textit{graph directed self-similar} resistance form.

\begin{definition}\label{def:SG3.02}
Let $R,\rH,\rI,\rSG>0$. For each $n\geq 0$, define the weight function of the graph $\Gamma_n$ by 
\begin{equation}\label{eq:SG3.03}
r_n(x,y)=\begin{cases}
R\rH^n&\text{if }\{x,y\}\text{ build a triangle},\\
R\rH^{k-1}\rI\big(\frac{1}{2}\big)^{n-k}&\text{if }\{x,y\} \text{ build a segment ``born'' at level }k\leq n ,\\
R\rH^{k-1}\rSG\big(\frac{3}{5}\big)^{n-k}&\text{if }\{x,y\} \text{ build an inverted }\SG\text{ ``born'' at level }k\leq n,
\end{cases}
\end{equation}
and define $(\E_n,\ell(V_n))$ to be the associated resistance form given by~\eqref{eq:SG3.02}.
\end{definition}
\begin{figure}[h!tpb]
\centering
\begin{tabular}{ccc}
\begin{tikzpicture}[scale=.65]
\draw[thick] ($(240:4)$) circle (2pt) --  ($(240+60:4)$) circle (2pt) -- ($(240:4)+(60:4)$) circle (2pt) -- ($(240:4)$);
\end{tikzpicture}
\hspace*{.5in}
&
\begin{tikzpicture}[scale=.65]
\draw[thick] ($(60:4)$) circle (2pt) --  ($(120:4)$) circle (2pt) -- ($(120:4)+(300:4)$) circle (2pt) -- ($(60:4)$);
\end{tikzpicture}
\hspace*{.5in}
&
\begin{tikzpicture}[scale=.65]
\draw[thick] ($(0:2)$) circle (2pt) --  ($(180:2)$) circle (2pt);
\draw[white] ($(270:1.5)$) to++ ($(0:1)$);
\end{tikzpicture}
\\
\hspace*{.5in}& \hspace*{.5in}&\\
$\downarrow$\hspace*{.5in} &$\downarrow$ \hspace*{.5in}&$\downarrow$ \\
& &\\
\begin{tikzpicture}[scale=.65]
\draw[thick] ($(240:4)$) circle (2pt) to ++ ($(0:1)$) circle (2pt) to ++ ($(120:1)$) circle (2pt) to ++ ($(240:1)$);
\draw[thick] ($(240:4)+(0:1.5)$) circle (2pt) to ++ ($(0:1)$) circle (2pt) to ++ ($(120:1)$) circle (2pt) to ++ ($(240:1)$);
\draw[thick] ($(240:4)+(0:3)$)  circle (2pt) to ++ ($(0:1)$) circle (2pt) to ++ ($(120:1)$) circle (2pt) to ++ ($(240:1)$);
\draw[thick] ($(240:4)+(60:1.5)$) circle (2pt) to ++ ($(0:1)$) circle (2pt) to ++ ($(120:1)$) circle (2pt) to ++ ($(240:1)$);
\draw[thick] ($(240:4)+(60:3)$) circle (2pt) to ++ ($(0:1)$) circle (2pt) to ++ ($(120:1)$) circle (2pt) to ++ ($(240:1)$);
\draw[thick] ($(240:4)+(60:1.5)+(0:1.5)$) circle (2pt) to ++ ($(0:1)$) circle (2pt) to ++ ($(120:1)$) circle (2pt) to ++ ($(240:1)$);
\draw[thick] ($(240:4)+(60:1.5)+(0:1)$) circle (2pt) to ++ ($(0:.5)$) circle (2pt) to ++ ($(240:.5)$) circle (2pt) to ++ ($(120:.5)$) ;
\draw[thick] ($(240:4)$) to ++ ($(0:4)$) to ++ ($(120:4)$) -- cycle;
\draw  ($(240:3)$) node[below left] {\small $\rH\;$}
 ($(240:2.25)$) node[below left] {\small $\rI\;\;$}
 ($(240:1.5)$) node[below left] {\small $\rH\;$}
 ($(240:.7)$) node[below left] {\small $\rI\;\;$}
 ($(240:.05)$) node[below left] {\small $\rH\;$}
 ($(240:4)$) node[below right] {\small $\rH\;$}
 ($(260:3.5)$) node[below] {\small $\rI\;\;$}
 ($(280:3.5)$) node[below left] {\small $\rH\;$}
 ($(285:3.55)$) node[below] {\small $\rI\;\;$}
 ($(295:3.75)$) node[below] {\small $\rH\;$}
 ($(270:2.25)$) node[below right] {$\scriptstyle \rSG\;$}
 ($(270:2.25)$) node[above] {$\scriptstyle\rSG\;$}
 ($(270:2.25)$) node[below left] {$\scriptstyle\rSG$};
\end{tikzpicture}
\hspace*{.5in}
&
\begin{tikzpicture}[scale=.65]
\draw[thick] ($(60:4)$) circle (2pt) to ++ ($(180:2)$) circle (2pt) to ++ ($(300:2)$) to ++ ($(60:2)$);
\draw[thick] ($(120:4)$) circle (2pt) to ++ ($(0:2)$) circle (2pt) to ++ ($(240:2)$) circle (2pt) to ++ ($(120:2)$);
\draw[thick] ($(0:0)$) circle (2pt) to ++ ($(60:2)$) circle (2pt) to ++ ($(180:2)$) circle (2pt)to ++ ($(300:2)$);
\draw ($(120:2.75)$) node[left] {\small $\frac{3}{5}$}
($(120:.75)$) node[left] {\small $\frac{3}{5}$}
($(110:3.5)$) node[above] {\small $\frac{3}{5}$}
($(70:3.5)$) node[above] {\small $\frac{3}{5}$}
($(60:2.75)$) node[right] {\small $\frac{3}{5}$}
($(60:.75)$) node[right] {\small $\frac{3}{5}$};
\end{tikzpicture}
\hspace*{.5in}
&
\begin{tikzpicture}[scale=.65]
\draw[thick] ($(180:2)$) circle (2pt) to ++ ($(0:2)$) circle (2pt);
\draw[thick] ($(180:2)+(0:2)$) to ++ ($(0:2)$) circle (2pt);
\draw[white] ($(270:1.5)$) to++ ($(0:1)$);
\draw ($(180:1)$) node[above] {\small $\frac{1}{2}$};
\draw ($(0:1)$) node[above] {\small $\frac{1}{2}$};
\end{tikzpicture}
\end{tabular}
\caption{\small Graph subdivision and corresponding resistance scaling factors.}
\label{fig:SG3.04}
\end{figure}
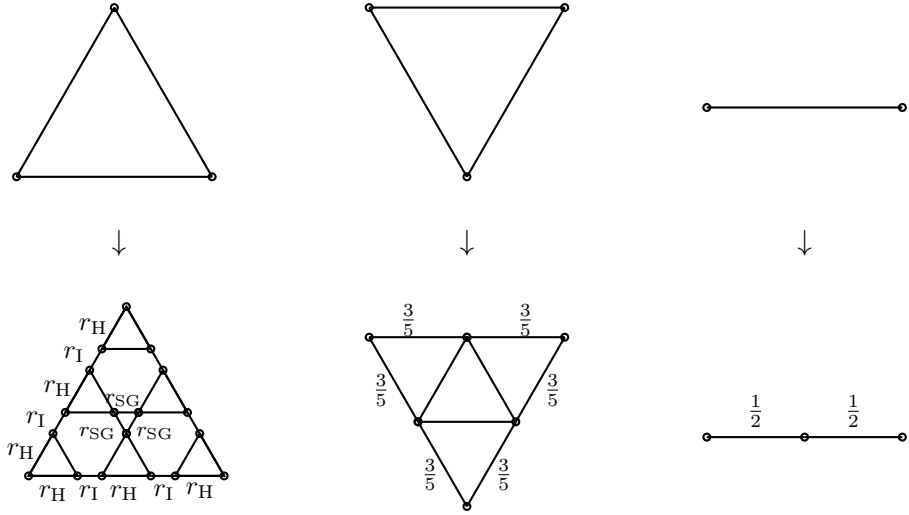
It is a well-known result from the theory of resistance forms, see e.g.~\cite[Theorem 2.2.6]{Kig01}, that the limit
\begin{equation}\label{eq:SG3.04}
\E(u,v)=\lim_{n\to\infty}\E_n(u_{|_{V_n}},v_{|_{V_n}})
\end{equation}
leads to a meaningful resistance form if and only if at each level the resistances $r_n$ satisfy a certain \textit{compatibility condition}. Roughly speaking, this condition says that if we think of the graphs in Figure~\ref{fig:SG3.04} as electric networks with resistors instead of edges, at every level $n$ each subnetwork of $\Gamma_n$ of the type displayed in the first row has to be electrically equivalent to the network below it. 
%
Due to our ``self-similar'' choice of the resistances, it suffices that the networks in Figure~\ref{fig:SG3.05} are equivalent. 
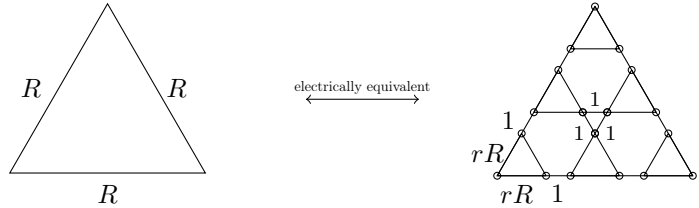
\begin{figure}[H]
\centering
\begin{tabular}{ccc}
\begin{tikzpicture}[scale=.65]
\draw ($(240:4)$) 
--  ($(240+60:4)$) 
 -- ($(240:4)+(60:4)$) 
-- cycle;
\draw ($(240:2)$) node[left] {\small $R\;$} ($(300:2)$) node[right] {\small $R$} ($(270:3.5)$) node[below] {\small $R$};
\end{tikzpicture}
\hspace*{.25in}&
\begin{tikzpicture}[scale=.5]
\draw[<->] ($(180:1.5)$) to++  ($(0:3)$) node[above, near start] {electrically equivalent};
\draw[white] ($(270:3)$) to++ ($(0:1)$);
\end{tikzpicture}
&
\begin{tikzpicture}[scale=.65]
\draw ($(240:4)$) circle (2pt) to ++ ($(0:1)$) circle (2pt) to ++ ($(120:1)$) circle (2pt) to ++ ($(240:1)$);
\draw ($(240:4)+(0:1.5)$) circle (2pt) to ++ ($(0:1)$) circle (2pt) to ++ ($(120:1)$) circle (2pt) to ++ ($(240:1)$);
\draw ($(240:4)+(0:3)$)  circle (2pt) to ++ ($(0:1)$) circle (2pt) to ++ ($(120:1)$) circle (2pt) to ++ ($(240:1)$);
\draw ($(240:4)+(60:1.5)$) circle (2pt) to ++ ($(0:1)$) circle (2pt) to ++ ($(120:1)$) circle (2pt) to ++ ($(240:1)$);
\draw ($(240:4)+(60:3)$) circle (2pt) to ++ ($(0:1)$) circle (2pt) to ++ ($(120:1)$) circle (2pt) to ++ ($(240:1)$);
\draw ($(240:4)+(60:1.5)+(0:1.5)$) circle (2pt) to ++ ($(0:1)$) circle (2pt) to ++ ($(120:1)$) circle (2pt) to ++ ($(240:1)$);
\draw ($(240:4)+(60:1.5)+(0:1)$) circle (2pt) to ++ ($(0:.5)$) circle (2pt) to ++ ($(240:.5)$) circle (2pt) to ++ ($(120:.5)$) ;
\draw ($(240:4)$) to ++ ($(0:4)$) to ++ ($(120:4)$) -- cycle;
\draw  ($(240:3)$) node[below left] {\small $rR\;$}
 ($(240:2.25)$) node[below left] {\small $1\;\;$}
 ($(245:3.8)$) node[below] {\small $rR$}
 ($(260:3.5)$) node[below] {\small $1\;\;$}
 ($(270:2.25)$) node[below right] {$\scriptstyle 1$}
 ($(270:2.25)$) node[above] {$\scriptstyle 1$}
 ($(270:2.25)$) node[below left] {$\scriptstyle 1$};
\end{tikzpicture}
\end{tabular}
\caption{\small As networks, each triangular cell must be equivalent to its $(n+1)$th-level subdivision.}
\label{fig:SG3.05}
\end{figure}
\begin{theorem}\label{thm:SG3.01}
For each $n\geq 0$, let $(\E_n,\ell(V_n))$ be the resistance form given by~\eqref{eq:SG3.03} with $\rI=\rSG=1$, and $\rH=r$. 
\begin{itemize}[leftmargin=.3in]
\item[(i)~] The sequence $\{(\E_n,\ell(V_n))\}_{n\geq 0}$ is compatible if and only if
\begin{equation}\label{eq:SG3.05}
R=\frac{30}{9-31r-\sqrt{81+r(61r-138)}}\qquad\text{where}\qquad 0<r<\frac{7}{15}.
\end{equation}
\item[(ii)] The latter sequence yields a unique resistance form $(\E,\dom\E)$ given by~\eqref{eq:SG3.04}, where $\dom\E=\{u\colon V_*\to\mbbR~|~\E(u,u)<\infty\}$ and $V_*=\cup_{n\geq 0}V_n$.
\item[(iii)] Any function $u\in\dom\E$ is H\"older continuous with respect to the Euclidean metric.
\item[(iv)] The effective resistance metric associated with $(\E,\dom\E)$ induces the same topology as the Euclidean metric.
\item[(v)] The resistance form $(\E,\dom\E)$ extends to a local and regular resistance form on $\rm H$.
\end{itemize}
\end{theorem}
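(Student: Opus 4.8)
The plan is to take the five assertions in order; part~(i) carries essentially all of the computation specific to this example, while (ii)--(v) are a careful but standard application of Kigami's theory of resistance forms.

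\emph{Part (i).} By definition, compatibility of $\{(\E_n,\ell(V_n))\}_{n\ge0}$ means that for every $n$ the trace (Schur complement) of $\E_{n+1}$ onto $V_n$ equals $\E_n$. Because the resistance scalings in Definition~\ref{def:SG3.02} do not depend on the level, this reduces --- by the graph-directed self-similarity, exactly as in \cite[Lemma~5.1]{HN03} --- to the single requirement that the trace of $\E_1$ onto $V_0$ equal $\E_0$: the one-step subdivision of an interval into two halves of scaling $\tfrac12$ is automatically ``transparent'' ($\tfrac12+\tfrac12=1$), and the one-step subdivision of an inverted $\SG$ under the classical $\tfrac35$-renormalization is likewise transparent, so the only genuine constraint comes from the subdivision of the upright triangle into $\Gamma_1$. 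Since $\Gamma_1$ inherits the full dihedral-$3$ symmetry of the base triangle, both $\E_0$ and the trace of $\E_1$ onto $V_0$ are $D_3$-symmetric forms on the three-point set $V_0=\{P_1,P_2,P_3\}$, hence each is determined by a single number --- the effective resistance between two boundary vertices, which for $\E_0$ equals $\tfrac{2R}{3}$. I would then compute $R^{\Gamma_1}_{\mathrm{eff}}(P_1,P_2)$ as a rational function of $R$ and $r$: replace the six triangular $1$-cells and the inverted-$\SG$ cell by their $\Delta$--Y equivalents, fold the network along the reflection axis through $P_3$ to roughly halve the number of unknowns, reduce by series--parallel operations, set the outcome equal to $\tfrac{2R}{3}$, and clear denominators to obtain a quadratic in $R$ whose relevant root is \eqref{eq:SG3.05}. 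The choice of root and the interval $0<r<\tfrac7{15}$ are forced by $R>0$; note that the polynomial $81+r(61r-138)$ under the radical has negative discriminant, hence is positive for all real $r$, so reality of the square root imposes no restriction and the constraint on $r$ is purely a positivity condition. This explicit reduction of the level-one network, though elementary, is the most laborious part and the step I expect to be the main obstacle.

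\emph{Parts (ii)--(iv).} Assertion~(ii) is then immediate from \cite[Theorem~2.2.6]{Kig01}: a compatible sequence of resistance forms on an increasing family of finite sets yields, via the monotone limit \eqref{eq:SG3.04} (compatibility makes $n\mapsto\E_n(u|_{V_n},u|_{V_n})$ nondecreasing), a unique resistance form on $V_*=\bigcup_nV_n$ with the stated domain. For (iii) and (iv) the crucial input is a scaling estimate for the resistance metric $R$ of $(\E,\dom\E)$: the graph-directed construction makes every edge resistance inside a triangular $n$-cell $\phi_\alpha({\rm H})$ equal to $r^{n}$ times the corresponding edge resistance of ${\rm H}$ (and analogously for the interval- and gasket-subcells), so by Rayleigh monotonicity, homogeneity of effective resistance, and the finiteness $\diam_R({\rm H})<\infty$ (the standard geometric-series bound for finitely ramified self-similar sets, whose resistance scalings $r,\tfrac12,\tfrac35$ are all $<1$) one gets $\diam_R(\phi_\alpha({\rm H}))\le r^{n}\diam_R({\rm H})$ and the analogous decay for interval- and gasket-cells. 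Feeding this, the chain condition for the cell structure of ${\rm H}$ (any two points at Euclidean distance $\delta$ lie in a bounded number of adjacent cells of Euclidean diameter comparable to $\delta$; this is the one geometric fact one must verify, and it is standard for graph-directed self-similar sets with finitely many cell types), and the elementary bound $|u(x)-u(y)|^2\le R(x,y)\,\E(u,u)$ into one estimate yields $R(x,y)\le C|x-y|^{\theta}$ with $\theta=\log(1/r)/\log(1/s)>0$, hence $|u(x)-u(y)|\le C\,\E(u,u)^{1/2}|x-y|^{\theta/2}$ for every $u\in\dom\E$; extending by density from $V_*$ to ${\rm H}$ gives (iii). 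The same inequality makes $\mathrm{id}\colon({\rm H},|\cdot|)\to({\rm H},R)$ continuous, and since ${\rm H}$ is Euclidean-compact and $R$ is a genuine metric on ${\rm H}$ --- point separation holds on $V_*$ by connectivity of the $\Gamma_n$, and on the remaining pairs by exhibiting a finite-energy function that distinguishes them, using the finite ramification of ${\rm H}$ --- a continuous bijection from a compact space to a Hausdorff space is a homeomorphism, which is (iv).

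\emph{Part (v).} By (iv) the resistance-metric completion of $V_*$ is ${\rm H}$ itself (a metric inducing a compact topology is complete), so $(\E,\dom\E)$ extends to a resistance form on ${\rm H}$, and by (iii) $\dom\E\subseteq C({\rm H})$. To see regularity, given $f\in C({\rm H})$ and $\varepsilon>0$ I would pick $n$ with $\mathrm{osc}_C f<\varepsilon$ on every $n$-cell $C$ and take $h$ the $\E$-harmonic function with $h|_{V_n}=f|_{V_n}$; then $h\in\dom\E$ (since $\E(h,h)=\E_n(f|_{V_n},f|_{V_n})<\infty$), and the maximum principle for $\E$-harmonic functions on the finitely ramified cell structure controls $\|h-f\|_\infty$ by the oscillation of $f$ on $n$-cells, which tends to $0$; hence $\dom\E$ is $\|\cdot\|_\infty$-dense in $C({\rm H})$. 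Finally $\E$ is local: if $u,v\in\dom\E$ have disjoint compact supports, then for $n$ large enough every edge of $\Gamma_n$ is shorter than the distance between the supports, so no edge joins $\supp u$ to $\supp v$, every term of $\E_n(u|_{V_n},v|_{V_n})$ vanishes, and therefore $\E(u,v)=\lim_n\E_n(u|_{V_n},v|_{V_n})=0$.
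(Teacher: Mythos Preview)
Your proposal is correct and follows essentially the same route as the paper. For part~(i) the paper also reduces compatibility to the single level-one condition, applies the $\Delta$--Y transform to the subdivided triangle, and solves the resulting quadratic; the only cosmetic difference is that the paper equates the traced \emph{edge} resistance on $V_0$ to $R$, whereas you equate the effective resistance between two boundary points to $\tfrac{2R}{3}$ --- for a $D_3$-symmetric triangle these are equivalent formulations. For parts~(ii)--(v) the paper dispatches them in a single sentence (``standard results of resistance forms, see e.g.~\cite[Section~3]{Kig01}''), so your sketch of the resistance-diameter decay, the chain condition, the compact--Hausdorff homeomorphism argument, the density of piecewise harmonic functions, and the edge-length locality argument is considerably more detailed than what the paper supplies, but it is exactly the standard machinery the paper is invoking.
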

For simplicity, we also denote by $(\E,\dom\E)$ the extended resistance form on $\rm H$.
\begin{proof}
Let us prove (i). Then, statements (ii) through (v) will follow from standard results of resistance forms, see e.g.~\cite[Section 3]{Kig01}. The sequence $\{(\E_n,\ell(V_n))\}_{n\geq 0}$ is compatible if and only if the two networks in Figure~\ref{fig:SG3.05} are equivalent. On the one hand, applying the $\Delta$-Y transform, see e.g.~\cite[Lemma 2.1.15]{Kig01} or~\cite[Lemma 1.5.2]{Str01}, the resistance at level one between two vertices of a $1$-cell $x,y\in V_0$ is given by
\begin{equation*}\label{eq:SG3.01.01}
r_{1}(x,y)=rR+\frac{\big(\frac{2}{3}rR+1\big)(4rR+5)}{\frac{7}{3}rR+3}.
\end{equation*}
On the other hand, the equivalence of the networks means
\begin{equation*}\label{eq:SG3.01.02}
r_{1}(x,y)=r_0(x,y)=R.
\end{equation*}
After some algebraic manipulations, the only positive solution of this equation is~\eqref{eq:SG3.05}.
\end{proof}

\begin{remark}\label{rem:SG3.02}
Notice that $R$ diverges as $r$ approaches the critical value $\frac{7}{15}$, which is precisely the resistance scaling factor of $\SG_3$, the base fractal of $\rm H$. The fact that $r$ should remain below this critical value can be explained by observing that if $r=\frac{7}{15}$, then the network on the left hand side of Figure~\ref{fig:SG3.05} would be electrically equivalent to the first-level approximation of the $\SG_3$. This would require $\rI=\rSG=0$, which we ruled out by assuming them (equal, but most importantly) strictly positive. 
\end{remark}

\begin{remark}\label{rem:SG3.03}
In terms of the \textit{effective resistance} metric associated with the energy $(\E,\dom\E)$ from~\eqref{eq:SG3.03},
\begin{equation*}\label{eq:SG3.06}
R_{\E}(x,y)=\sup\Big\{\frac{|u(x)-u(y)|^2}{\E(u,u)}~|~\E(u,u)\neq 0\Big\}.
\end{equation*}
Applying the $\Delta$-Y transform to the triangular network on the right hand side of Figure~\ref{fig:SG3.05}, we see that the effective resistance between any two points $x,y\in V_0$ is given by $\frac{2}{3}R$. Thus, a scaling factor $r$ that nears $\frac{7}{15}$ makes these two points \textit{effectively} further apart. When the scaling factor actually equals the critical value, the network effectively falls apart because the points become infinitely far away from each other. In other words, the network ends up being a set of three isolated nodes. 
\end{remark}

The general theory of resistance forms provides the existence of the energy $(\E,\dom\E)$ as the limit in~\eqref{eq:SG3.03}, where $\dom\E=\{u\colon{\rm H}\to\mbbR~|~\E(u,u)<\infty\}$. 
In order to obtain more information about the analytic structure of the form it is desirable to characterize the domain more precisely.

\subsection{Characterizable energy}
Adapting several arguments from~\cite{ARKT16} it is possible to construct an energy $(\widetilde{\E},\dom\widetilde{\E})$ on $\rm H$ with a more explicit expression. Namely, this energy will consist of a countable sum of energies on line segments and Sierpinski gaskets, hence not ``capturing'' the structure of the base fractal $\SG_3$. In the following we outline the main ideas of this procedure.

\medskip

Although $\rm H$ is not strictly speaking a \textit{fractal quantum graph} as defined in~\cite[Definition 8.1]{ARKT16}, this concept may be seen as a predecessor of hybrid fractals. Many ideas in~\cite[Section 8]{ARKT16} will thus extend to hybrids by considering the standard energy forms on intervals and (inverted) Sierpinski gaskets instead of only integrals.

\begin{definition}\label{def:SG3.03}
Let $(\E,\dom\E)$ be the resistance form from Theorem~\ref{thm:SG3.01}. For each $n\geq 0$, define
\begin{equation*}\label{eq:SG3.07}
\widetilde{\F}_n
=\{u\in\dom\E~|~u_{|_{\triangle}}\text{ is constant on every }n\text{-cell }\triangle\}.
\end{equation*}  
Furthermore, define the bilinear form $\widetilde{\E}_n=\E_{|_{\widetilde{\F}_n\times \widetilde{\F}_n}}$ and 
\begin{equation*}\label{eq:SG3.09}
\widetilde{R}_n(x,y)=\sup\bigg\{\frac{|u(x)-u(y)|^2}{\widetilde{\E}_n(u,u)}~|~u\in \widetilde{\F}_n,~\widetilde{\E}_n(u,u)\neq 0\bigg\}.
\end{equation*}
\end{definition}
The form $\widetilde{\E}_n$ can be understood as the energy associated with the graphs $\widetilde{\Gamma}_n=(V_n,E_n,\tilde{r}_n)$, constructed only through division of upright triangular cells as the first column of Figure~\ref{fig:SG3.02} and weight function given by
\begin{equation*}\label{eq:SG3.10}
\tilde{r}_n(x,y)=\begin{cases}
0&\text{if }\{x,y\}\text{ build a triangle},\\
R_0r^{k-1}&\text{if }\{x,y\} \text{ build a segment or inverted triangle ``born'' at level }k\leq n.
\end{cases}
\end{equation*}
Following~\cite[Theorem 5.2]{ARKT16}, the sequence of pseudo-metrics $\{\widetilde{R}_n\}_{n\geq 0}$ converges to a metric $\widetilde{R}$ on the hybrid $\rm H$ and for any $x,y\in {\rm H}$
\begin{equation*}\label{eq:SG3.11}
\widetilde{R}(x,y)\leq R_{\E}(x,y).
\end{equation*}
Moreover, the metric $\widetilde{R}$ induces the same topology as the Euclidean metric on $\rm H$. In order to describe its associated energy $(\widetilde{\E},\dom\widetilde{\E})$, let 
$(\E^{\I},\dom\E^{\I})$ and $(\E^{\SG},\dom\E^{\SG})$ denote the standard resistance forms on the interval and the (inverted) Sierpinski gasket, see e.g.~\cite[Chapter 2]{Str01}, and define for each $k\geq 0$, $\alpha\in\mcA_k$ and $u\in\dom\E$,
\begin{equation*}\label{eq:SG3.12}
\E_k^{\I_{\alpha}}(u,u)=\frac{1}{r^{k}}\E^{\I}((u\circ\phi_\alpha){}_{|_{\I}},(u\circ\phi_\alpha){}_{|_{\I}})
\end{equation*}
as well as
\begin{equation*}\label{eq:SG3.13}
\E_k^{\SG_\alpha}(u,u)=\frac{1}{r^{k}}\E^{\SG}((u\circ\phi_\alpha){}_{|_{\SG}},(u\circ\phi_\alpha){}_{|_{\SG}}).
\end{equation*}
Proposition~\ref{prop:MT01} and Theorem~\ref{thm:MT01} yield the existence of an energy on $\rm H$ that can be expressed as a countable sum of energies, ``ignoring'' the underlying $\SG_3$ structure of the base generator of $\rm H$.
\begin{proposition}\label{prop:SG3.01}
The pair $(\widetilde{\E},\dom\widetilde{\E})$ given by
\begin{equation}\label{eq:SG3.14}
\widetilde{\E}(u,v)=\sum_{k=0}^\infty\sum_{\alpha\in\mcA_k}\Big(\sum_{\I\in J}\E_k^{\I_\alpha}(u,v)+\E_k^{\SG_\alpha}(u,v)\Big)
\end{equation}
and 
\begin{multline*}\label{eq:SG3.15}
\dom\widetilde{\E}=\{u\in\dom\E~|~\text{there exists}~\{u_n\}_{n\geq 1}\subseteq\cup_{n\geq 1}\widetilde{\F}_n\text{ such that}\\
\lim_{n\to\infty}\E(u-u_{n},u-u_{n})=0\text{ and }\lim_{n\to\infty}u_{n}(x)=u(x)~\forall~x\in {\rm H}\}
\end{multline*}
is a resistance form on $\rm H$.
\end{proposition}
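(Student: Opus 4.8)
The plan is to verify, one by one, the defining properties (RF1)--(RF5) of a resistance form in the sense of Kigami~\cite[Chapter~2]{Kig01} for the pair $(\widetilde{\E},\dom\widetilde{\E})$; this is also the concrete template for the general statement Theorem~\ref{thm:MT01}, of which the present proposition is an instance after identifying $\rm H$ with its finitely ramified cell structure over the dust $\mcC_{\rm H}$. The building blocks are under control from the outset: each summand $\E_k^{\I_\alpha}$ is, up to the positive factor $r^{-k}$, the pullback by $\phi_\alpha$ of the standard resistance form on an interval, and similarly $\E_k^{\SG_\alpha}$ is a rescaled pullback of the standard resistance form on an inverted Sierpinski gasket; each is itself a resistance form on its piece --- symmetric, non-negative, Markovian, vanishing exactly on functions constant on the piece, and with finite effective resistance between the two (respectively three) boundary points of the piece. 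The pieces $\{\I_\alpha,\SG_\alpha\}$ are pairwise disjoint except at junction points of $\rm H$, their union is the dense subset ${\rm H}\setminus\mcC_{\rm H}$, and the way they are glued is finitely ramified, so the series law lets one add resistances along any finite chain of pieces. Besides this, the two facts already at hand and used throughout are Theorem~\ref{thm:SG3.01}(iii) (every $u\in\dom\E\supseteq\dom\widetilde{\E}$ is continuous on $\rm H$) and the comparison $\widetilde{R}\le R_{\E}$, with $\widetilde{R}$ inducing the Euclidean topology.

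The algebraic and Markovian parts are then routine. Symmetry, bilinearity and non-negativity of $\widetilde{\E}$ are inherited term by term, and convergence of the series~\eqref{eq:SG3.14} on $\dom\widetilde{\E}$ is built into the definition of that space: for an approximating sequence $\{u_n\}\subseteq\bigcup_n\widetilde{\F}_n$, each $u_n$ is constant on every $n$-cell, so~\eqref{eq:SG3.14} evaluated at $u_n$ reduces to a finite partial sum, coincides with $\widetilde{\E}_n(u_n,u_n)$, and these numbers converge because $\{u_n\}$ is $\E$-convergent. Constants lie in $\dom\widetilde{\E}$ and in the kernel of $\widetilde{\E}$; conversely, $\widetilde{\E}(u,u)=0$ forces every $\E^{\I}((u\circ\phi_\alpha)|_{\I},\cdot)$ and $\E^{\SG}((u\circ\phi_\alpha)|_{\SG},\cdot)$ to vanish, so $u$ is constant on each piece, and by continuity of $u$, density of $\bigcup_\alpha(\I_\alpha\cup\SG_\alpha)$ and connectedness of $\rm H$, $u$ is constant on $\rm H$; this is (RF1). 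For (RF5), the unit contraction $\bar u=(0\vee u)\wedge 1$ acts piecewise, each standard resistance form is Markovian, and applying the contraction to an approximating sequence for $u$ produces one for $\bar u$, so $\bar u\in\dom\widetilde{\E}$ and $\widetilde{\E}(\bar u,\bar u)\le\widetilde{\E}(u,u)$. For (RF4), joining $x$ to $y$ by a finite chain of pieces and adding the (finite) resistances along it gives $|u(x)-u(y)|^2\le\widetilde{R}(x,y)\,\widetilde{\E}(u,u)$ for every $u\in\dom\widetilde{\E}$, with $\widetilde{R}(x,y)\le R_{\E}(x,y)<\infty$. For (RF3), given a finite $V\subseteq{\rm H}$ one chooses $n$ large enough that the points of $V$ are separated by the $n$-cells and pieces, so that prescribed values on $V$ are realized by some $u\in\widetilde{\F}_n\subseteq\dom\widetilde{\E}$ (constant on every $n$-cell, affine on intervals, $\E^{\SG}$-harmonic on the gaskets born by level $n$).

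The remaining and most delicate point is (RF2): that $\dom\widetilde{\E}/{\sim}$ is a Hilbert space under $\widetilde{\E}$. Given a $\widetilde{\E}$-Cauchy sequence, I would normalise representatives to agree at a fixed base point; their restrictions to each individual piece are then Cauchy in the Hilbert space of the corresponding standard resistance form and converge there, while the uniform bound coming from $\widetilde{R}\le R_{\E}$ and the coincidence of the $\widetilde{R}$-topology with the Euclidean one upgrade this to uniform convergence to a function $u$ continuous on $\rm H$; since $R_{\E}$ dominates $\widetilde{R}$ and $(\dom\E,\E)$ is complete, $u\in\dom\E$, and a diagonal argument over the piecewise approximants places $u\in\dom\widetilde{\E}$ with $\widetilde{\E}(u-u_m,u-u_m)\to 0$. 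I expect the crux to be exactly this interchange of the countable sum with the limit: one must rule out energy ``escaping'' across the infinitely many pieces and show that a $\widetilde{\E}$-Cauchy sequence has a pointwise limit that is again admissible --- which amounts to identifying $\dom\widetilde{\E}$ with the completion of $\bigcup_n\widetilde{\F}_n$ in the combined energy-plus-supremum norm. This is precisely where the estimate $\widetilde{R}\le R_{\E}$ and the matching of topologies are indispensable; modulo this step, the verification reduces to bookkeeping carried out, in the abstract, in the proof of Theorem~\ref{thm:MT01}.
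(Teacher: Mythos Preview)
Your verification is essentially correct, but it takes a much longer route than the paper's own argument, and in one place inverts the difficulty of the steps.

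The key observation you partly use but never state cleanly is that, by the very definition of $\dom\widetilde{\E}$ and of $\widetilde{\E}_n=\E_{|_{\widetilde{\F}_n\times\widetilde{\F}_n}}$, one has $\widetilde{\E}=\E$ on $\dom\widetilde{\E}\times\dom\widetilde{\E}$ (this is the content of Proposition~\ref{prop:MT01} in the abstract setting). Once this identification is made, the paper's proof of the resistance form axioms is almost one line each: (RF1) and (RF4) are immediate from $\dom\widetilde{\E}\subseteq\dom\E$ and $\widetilde{\E}=\E$ there; (RF2) holds because $\dom\widetilde{\E}$ is, by definition, the $\E$-closure of $\bigcup_n\widetilde{\F}_n$ inside the already complete space $\dom\E/{\sim}$; (RF3) is the separation Lemma~\ref{lemma:MR01}; and (RF5) is obtained by citing~\cite[Theorem~11.6]{ARFK17}, using only that $V_*$ is $R_\E$-bounded. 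In particular, (RF2) --- which you flag as ``the most delicate point'' and for which you sketch a diagonal/compactness argument --- is in the paper the most trivial, because closedness is built into the definition of the domain. Your chain-of-pieces argument for (RF4) is also more than needed and, as written, does not directly cover points of the dust $\mcC_{\rm H}$; the inclusion $\dom\widetilde{\E}\subseteq\dom\E$ already gives $R_{\widetilde{\E}}\le R_\E<\infty$ for all pairs.

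What your approach buys is a self-contained verification from the explicit sum~\eqref{eq:SG3.14}, which makes transparent why each axiom holds piece by piece. What the paper's approach buys is brevity: by recognising $(\widetilde{\E},\dom\widetilde{\E})$ as the restriction of the known resistance form $(\E,\dom\E)$ to a closed subspace, four of the five axioms come for free. (A minor point: the general statement you mean to invoke is Theorem~\ref{thm:MR02}, not Theorem~\ref{thm:MT01}, which concerns the equality $(\E,\F)=(\widetilde{\E},\widetilde{\F})$ rather than the resistance-form property.)
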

\begin{remark}\label{rem:SG3.04}
Any function in the domain of $\widetilde{\E}$ is continuous. By definition, c.f.\ Definition~\ref{def:SG3.03}, $\dom\E$ is the domain of the energy form provided by Theorem~\ref{thm:SG3.01} and any function in $\dom\E$ is (H\"older) continuous with respect to the Euclidean metric, c.f.\ Theorem~\ref{thm:SG3.01}(iii). In fact, in view of Theorem~\ref{thm:SG3.01}(ii) and Theorem~\ref{thm:SG3.02} we can also write
\begin{equation*}
\dom\widetilde{\E}=\{u\colon {\rm H}\to\mbbR~|~u\text{ is continuous and~\eqref{eq:SG3.14} is finite}\}.
\end{equation*}
\end{remark}
Moreover, this energy satisfies the following crucial property.
\begin{lemma}\label{lemma:SG3.01}
The resistance form $(\widetilde{\E},\dom\widetilde{\E})$ is graph-directed self-similar, i.e.
\begin{equation}\label{eq:SG3.16}
\widetilde{\E}(u,v)=\sum_{i=1}^6r\widetilde{\E}(u\circ\phi_i,v\circ\phi_i)+\sum_{\I\in J}\E^{\I}(u_{|_{\I}},v_{|_{\I}})+\E^{\SG}(u_{|_{\SG_\alpha}},v_{|_{\SG_\alpha}})
\end{equation}
for any $u,v\in\dom\widetilde{\E}$.
\end{lemma}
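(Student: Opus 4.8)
The plan is to obtain \eqref{eq:SG3.16} directly from the explicit series representation \eqref{eq:SG3.14} of $\widetilde{\E}$, by isolating the summand with $k=0$ and regrouping the remaining terms according to the first letter of the index word $\alpha$. The identity then becomes essentially a bookkeeping statement about the splitting $\bigcup_{k\ge0}\mcA_k=\{\text{\o}\}\cup\bigcup_{i=1}^{6}\,i\cdot\bigcup_{k\ge0}\mcA_k$ of the word tree.

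First I would check that both sides of \eqref{eq:SG3.16} are meaningful for $u,v\in\dom\widetilde{\E}$. Each similitude $\phi_i$ maps $\rm H$ continuously into the $i$-cell $\phi_i({\rm H})\subseteq{\rm H}$, so $u\circ\phi_i$ and $v\circ\phi_i$ are continuous on $\rm H$. Moreover, since $\phi_{i\beta}=\phi_i\circ\phi_\beta$, substituting this into the definitions of $\E_k^{\I_\alpha}$ and $\E_k^{\SG_\alpha}$ shows that, for $i\in\{1,\dots,6\}$, $k\ge1$ and $\beta\in\mcA_{k-1}$, the level-$k$ terms of \eqref{eq:SG3.14} indexed by $i\beta$ coincide, up to the scaling factor forced by the $r^{-k}$ normalization, with the level-$(k-1)$ terms of the same series evaluated at $u\circ\phi_i$ and indexed by $\beta$. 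Summing over $\beta$ and $k$ identifies $\widetilde{\E}(u\circ\phi_i,u\circ\phi_i)$ with a fixed multiple of the subseries of \eqref{eq:SG3.14} (for $u$) ranging over words that begin with $i$, which is bounded by a multiple of $\widetilde{\E}(u,u)<\infty$; by the characterization of the domain recorded in Remark~\ref{rem:SG3.04}, this gives $u\circ\phi_i\in\dom\widetilde{\E}$, and likewise for $v\circ\phi_i$. Finally, all summands of \eqref{eq:SG3.14} are nonnegative when $u=v$, so a termwise Cauchy--Schwarz bound together with $u,v\in\dom\widetilde{\E}$ shows that the triple series \eqref{eq:SG3.14} converges absolutely, which licenses the rearrangements in the next step.

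Then I would carry out the regrouping. Since $\phi_{\text{\o}}={\rm id}$, the $k=0$ summand of \eqref{eq:SG3.14} is exactly $\sum_{\I\in J}\E^{\I}(u_{|_{\I}},v_{|_{\I}})+\E^{\SG}(u_{|_{\SG}},v_{|_{\SG}})$, i.e.\ the last two terms of \eqref{eq:SG3.16}. For the tail $\sum_{k\ge1}$ I would write each $\alpha\in\mcA_k$ uniquely as $\alpha=i\beta$ with $i\in\{1,\dots,6\}$ and $\beta\in\mcA_{k-1}$, apply the scaling identity from the previous step to $\E_k^{\I_{i\beta}}(u,v)$ and $\E_k^{\SG_{i\beta}}(u,v)$, interchange the (absolutely convergent) sums over $i$, $k$ and $\beta$, and reindex $k\mapsto k-1$. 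The tail then equals
\[
\sum_{i=1}^{6} r\sum_{k\ge0}\sum_{\beta\in\mcA_k}\Big(\sum_{\I\in J}\E_k^{\I_\beta}(u\circ\phi_i,v\circ\phi_i)+\E_k^{\SG_\beta}(u\circ\phi_i,v\circ\phi_i)\Big)=\sum_{i=1}^{6}r\,\widetilde{\E}(u\circ\phi_i,v\circ\phi_i),
\]
the last equality being \eqref{eq:SG3.14} applied to $u\circ\phi_i$ and $v\circ\phi_i$. Adding back the $k=0$ term yields \eqref{eq:SG3.16}.

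The computation is purely formal once the representation \eqref{eq:SG3.14} is in hand; the points that genuinely require an argument are the membership $u\circ\phi_i\in\dom\widetilde{\E}$ (so that the right-hand side of \eqref{eq:SG3.16} is even defined) and the absolute convergence that justifies the interchange of sums, both of which reduce to \eqref{eq:SG3.14} and Remark~\ref{rem:SG3.04}. The only delicate bookkeeping I anticipate is tracking the exact power of $r$ produced by the index shift $\alpha\mapsto i\beta$, since that is precisely where the self-similar scaling factor in \eqref{eq:SG3.16} originates.
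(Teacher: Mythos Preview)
Your series-regrouping argument for the identity \eqref{eq:SG3.16} is correct and is in fact more direct than what the paper does: the paper first invokes \cite[Lemma~5.1]{HN03} to get the self-similar decomposition for $(\E,\dom\E)$ and then restricts to $\dom\widetilde{\E}\subseteq\dom\E$, whereas you extract the identity straight from the explicit formula \eqref{eq:SG3.14}. Both routes are fine for the equality itself.

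The gap is in how you establish $u\circ\phi_i\in\dom\widetilde{\E}$. You appeal to the characterization of $\dom\widetilde{\E}$ as $\{u\in C({\rm H}) : \text{\eqref{eq:SG3.14} is finite}\}$ from Remark~\ref{rem:SG3.04}, but that remark explicitly records this characterization as a \emph{consequence} of Theorem~\ref{thm:SG3.02}, and Theorem~\ref{thm:SG3.02} is in turn deduced from the present lemma. So your argument is circular: at this stage, $\dom\widetilde{\E}$ is only known via the approximation definition in Proposition~\ref{prop:SG3.01}, and finiteness of the sum \eqref{eq:SG3.14} for a continuous function is not yet known to force membership in $\dom\widetilde{\E}$.

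The paper avoids this by proving $u\circ\phi_i\in\dom\widetilde{\E}$ directly from the approximation definition: given a sequence $\{u_n\}\subseteq\bigcup_n\widetilde{\F}_n$ approximating $u$, it observes that $u_n\circ\phi_i\in\widetilde{\F}_{n+1}$ (constancy on $n$-cells pulls back to constancy on $(n+1)$-cells), checks pointwise convergence, and bounds $\E(u\circ\phi_i-u_n\circ\phi_i,u\circ\phi_i-u_n\circ\phi_i)$ by $r^{-1}\E(u-u_n,u-u_n)$ using the self-similarity of $\E$. You could graft this step onto your proof and keep your series argument for the identity, but the bare appeal to Remark~\ref{rem:SG3.04} does not stand on its own.
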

\begin{proof}
By~\cite[Lemma 5.1]{HN03}, the equality holds for $(\E,\dom\E)$ and since $\dom\widetilde{\E}\subseteq\dom\E$, 
\begin{equation*}\label{eq:SG3.17}
\widetilde{\E}(u,v)=\sum_{i=1}^6r\E(u\circ\phi_i,v\circ\phi_i)++\sum_{\I\in J}\E^{\I}(u_{|_{\I}},v_{|_{\I}})+\E^{\SG}(u_{|_{\SG_\alpha}},v_{|_{\SG_\alpha}}),
\end{equation*}
for any $u,v\in\dom\widetilde{\E}$. Thus, it remains to show that for any $u\in\dom\widetilde{\E}$ and $i\in\mcA_1$, $u\circ{\phi}_i\in\dom\widetilde{\E}$. Let $\{u_n\}_{n\geq 1}\subseteq \cup_{n\geq 1}\widetilde{\F}_n$ be the sequence that approximates $u$. Since $u_n$ is constant on $n$-cells, it follows that $u_n\circ\phi_i\in\widetilde{\F}_{n+1}$ for each $i\in\mcA_1$. Moreover, for all $x\in{\rm H}$, $\phi_i(x)\in {\rm H}$ and hence $\lim_{n\to\infty}u_n\circ{\phi}_i(x)=u\circ{\phi_i}(x)$. Finally, in view of~\eqref{eq:SG3.16} we get
\begin{equation*}\label{eq:SG3.18}
\widetilde{\E}(u\circ{\phi_i}-u_n\circ{\phi_i},u\circ{\phi_i}-u_n\circ{\phi_i})\leq \frac{1}{r}\E(u-u_n,u-u_n)
\end{equation*}
which tends to zero as $n\to\infty$ because $u\in\dom\widetilde{\E}$.
\end{proof}

Recall that the resistance scaling factors have been uniquely determined by the condition imposed in~\eqref{eq:SG3.05}. The associated graph directed self-similar resistance form is thus unique for these resistances, see~\cite[Section 7]{HMT06}, and therefore Lemma~\ref{lemma:SG3.01} yields the main result of this section.

\begin{theorem}\label{thm:SG3.02}
The resistance forms $(\E,\dom\E)$ and $(\widetilde{\E},\dom\widetilde{\E})$ coincide.
\end{theorem}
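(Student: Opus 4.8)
The plan is to establish the identity $(\E,\dom\E)=(\widetilde{\E},\dom\widetilde{\E})$ by combining the self-similarity from Lemma~\ref{lemma:SG3.01} with the uniqueness of graph-directed self-similar resistance forms for a fixed set of compatible resistance scaling factors. First I would observe that Theorem~\ref{thm:SG3.01}(v) exhibits $(\E,\dom\E)$ as a local regular resistance form on $\rm H$ which, by construction~\eqref{eq:SG3.04} together with~\cite[Lemma 5.1]{HN03}, is graph-directed self-similar for the scaling factors $\rH=r$, $\rI=\rSG=1$ fixed by the compatibility condition~\eqref{eq:SG3.05}. On the other hand, Proposition~\ref{prop:SG3.01} asserts that $(\widetilde{\E},\dom\widetilde{\E})$ is a resistance form on $\rm H$, and Lemma~\ref{lemma:SG3.01} shows it satisfies exactly the same self-similar decomposition~\eqref{eq:SG3.16} with the same scaling factors. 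By the uniqueness statement in~\cite[Section 7]{HMT06} (applied, as in the discussion preceding the theorem, to our prescribed resistances), a graph-directed self-similar resistance form with these data is unique up to the usual normalization; since both forms restrict to the standard resistance forms $\E^{\I}$ and $\E^{\SG}$ on the interval and inverted gasket pieces $J$, the normalizations agree and hence $\widetilde{\E}=\E$ as bilinear forms on the common part of their domains.

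It then remains to match the domains. One inclusion, $\dom\widetilde{\E}\subseteq\dom\E$, is immediate from Definition~\ref{def:SG3.03} and Proposition~\ref{prop:SG3.01}, since every $u\in\dom\widetilde{\E}$ is by definition an element of $\dom\E$. For the reverse inclusion I would use the characterization in Remark~\ref{rem:SG3.04}: a continuous $u\colon{\rm H}\to\mbbR$ with $\widetilde\E(u,u)<\infty$ lies in $\dom\widetilde\E$. Given $u\in\dom\E$, Theorem~\ref{thm:SG3.01}(iii) guarantees continuity, so it suffices to produce the approximating sequence $\{u_n\}\subseteq\cup_n\widetilde{\F}_n$ with $\E(u-u_n,u-u_n)\to0$ and pointwise convergence. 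The natural candidate is to let $u_n$ be the function that agrees with $u$ on the line segments and inverted gaskets born up to level $n$ and is extended to be constant (equal to its boundary values) on each triangular $n$-cell; this is harmonic-type interpolation relative to the triangular cells and by the self-similar decomposition~\eqref{eq:SG3.16} iterated $n$ times one has $\E(u-u_n,u-u_n)=\sum_{k\geq n}\sum_{\alpha\in\mcA_k}(\sum_{\I\in J}\E_k^{\I_\alpha}(u,u)+\E_k^{\SG_\alpha}(u,u))$, which is the tail of the convergent series~\eqref{eq:SG3.14} and hence tends to $0$. Pointwise convergence $u_n(x)\to u(x)$ follows because the triangular $n$-cells shrink to points and $u$ is continuous.

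The main obstacle I anticipate is the careful justification of the telescoping/self-similar identity $\E(u-u_n,u-u_n)=\sum_{k\geq n}\sum_{\alpha}(\cdots)$ and, relatedly, the verification that $u_n$ as defined is genuinely the energy-minimizing (triangle-harmonic) extension so that the cross terms vanish; this requires knowing that restricting energy to the triangular cells and replacing $u$ by a constant there strictly decreases energy by precisely the dropped summands, which in turn rests on the Markov/locality property of $(\E,\dom\E)$ from Theorem~\ref{thm:SG3.01}(v) and on the fact that the only coupling between a triangular cell and the rest of $\rm H$ is through its finitely many boundary points (Remark~\ref{rem:SG3.01}). A secondary subtlety is making the appeal to~\cite[Section 7]{HMT06} precise: one must check that the hypotheses for uniqueness of graph-directed self-similar resistance forms are met by our system — in particular that the scaling factors satisfying~\eqref{eq:SG3.05} are compatible in the required sense (which is exactly Theorem~\ref{thm:SG3.01}(i)) — so that uniqueness applies and forces $\E$ and $\widetilde\E$ to coincide rather than merely to be comparable.
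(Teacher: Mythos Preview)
Your first paragraph is precisely the paper's argument: the paper's entire proof is the sentence immediately preceding the theorem, which combines Lemma~\ref{lemma:SG3.01} with the uniqueness of graph-directed self-similar resistance forms for the fixed resistances~\eqref{eq:SG3.05} via~\cite[Section 7]{HMT06}. The key point you underweight is that uniqueness here is uniqueness of the resistance form \emph{as a pair} $(\E,\dom\E)$, so once both pairs satisfy the same self-similar identity~\eqref{eq:SG3.16} with the same normalization on the bonds, equality of domains comes for free and your second paragraph is not needed.

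That said, the explicit approximation you propose in the second paragraph does not work as written. You ask $u_n$ to agree with $u$ on all segments and inverted gaskets born up to level $n$ \emph{and} to be constant on each triangular $n$-cell. But every triangular $n$-cell meets the adjacent bonds at its three boundary vertices, where your two requirements force $u_n$ to take the three (generically distinct) values $u$ takes there while simultaneously being a single constant on the cell; since $\widetilde{\F}_n\subseteq\dom\E$ consists of continuous functions, no such $u_n$ exists unless $u$ is already constant on the boundary of every $n$-cell. A workable variant is to fix the constant on each $n$-cell first (say the value of $u$ at one chosen boundary vertex) and then interpolate harmonically on the bonds between those constants, but then $u_n$ no longer agrees with $u$ on the bonds and your clean tail identity $\E(u-u_n,u-u_n)=\sum_{k\geq n}(\cdots)$ fails; one needs a separate estimate. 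The paper simply avoids this by letting the uniqueness statement absorb the domain question.
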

Finally, equipping the hybrid $\rm H$ with a Radon measure $\mu$, the resistance form $(\E,\dom\E)$ induces a local and regular Dirichlet form on $L^2({\rm H},\mu)$, see e.g.~\cite[Theorem 9.4]{Kig12}. In view of the characterization of the Dirichlet form given in~\eqref{eq:SG3.14} we deduce from Theorem~\ref{thm:SG3.02} the following result concerning the fractal dust associated with the hybrid.
\begin{corollary}
The energy measure associated with $(\E,\dom\E)$ does not charge the fractal dust $\mcC_{\rm H}$.
\end{corollary}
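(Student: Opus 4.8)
The plan is to identify the energy measure of $(\E,\dom\E)$ using the explicit sum representation~\eqref{eq:SG3.14} furnished by Theorem~\ref{thm:SG3.02}, and then show that this measure is supported on the union of the intervals and inverted Sierpinski gaskets, which is disjoint (up to the countable set $V_*$) from the fractal dust $\mcC_{\rm H}$. First I would recall that the energy measure $\nu_u$ of a function $u\in\dom\E$ is the Radon measure characterized by $\int_{\rm H} f\,d\nu_u = 2\E(fu,u)-\E(f,u^2)$ for $f\in\dom\E\cap C({\rm H})$ (equivalently, defined via the Beurling--Deny / LeJan formula for the local regular Dirichlet form obtained after equipping $\rm H$ with $\mu$, c.f.~\cite[Theorem 9.4]{Kig12}), and that it suffices to prove $\nu_u(\mcC_{\rm H})=0$ for every $u$ in a dense subset of $\dom\widetilde\E$, e.g.\ for $u\in\cup_{n\geq1}\widetilde\F_n$, since energy measures depend continuously on $u$ in the energy norm and $\mcC_{\rm H}$ is closed.

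The key observation is that by Theorem~\ref{thm:SG3.02} and~\eqref{eq:SG3.14}, for every $u\in\dom\E$ the total energy decomposes as a countable sum indexed over the interval copies $\I_\alpha$ and gasket copies $\SG_\alpha$, $\alpha\in\mcA_k$, $k\geq0$. Correspondingly, the energy measure decomposes as
\begin{equation*}
\nu_u=\sum_{k=0}^\infty\sum_{\alpha\in\mcA_k}\Big(\sum_{\I\in J}\nu_u^{\I_\alpha}+\nu_u^{\SG_\alpha}\Big),
\end{equation*}
where each $\nu_u^{\I_\alpha}$ is the pushforward under $\phi_\alpha$ of (a scalar multiple of) the energy measure of $(u\circ\phi_\alpha)|_{\I}$ for the standard form on the interval, and similarly $\nu_u^{\SG_\alpha}$ is carried by the gasket copy $\phi_\alpha(\SG)$. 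This decomposition of the measure, as opposed to just the quadratic form, is the point requiring care: one should verify it by testing against $f\in\dom\E\cap C({\rm H})$ and using that the carré du champ operator is local, so that $\E(fu,u)$ and $\E(f,u^2)$ split along the same sum~\eqref{eq:SG3.14}; locality is available here since $(\E,\dom\E)$ is local by Theorem~\ref{thm:SG3.01}(v). Each summand $\nu_u^{\I_\alpha}$ is supported on the compact set $\phi_\alpha(\I)\subseteq{\rm H}$, and each $\nu_u^{\SG_\alpha}$ on $\phi_\alpha(\SG)$.

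It then remains to check that $\mcC_{\rm H}$ meets each of these supports in a $\nu_u$-null set. By Definition~\ref{def:SG3.01}(ii), $\mcC_{\rm H}=\cap_{n\geq0}\cup_{\alpha\in\mcA_n}\phi_\alpha(\mcC_{\rm H})$, so $\mcC_{\rm H}$ is contained in the closure of the union of the triangular $n$-cells for every $n$; in particular, for a fixed interval or gasket copy $K=\phi_\alpha(\I)$ or $\phi_\alpha(\SG)$ born at generation $k+1$, the intersection $K\cap\mcC_{\rm H}$ is contained in the (finite) set of junction points where $K$ is attached to neighbouring triangular cells, hence a finite subset of $V_*$. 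Since the standard energy measure on an interval or on a Sierpinski gasket assigns zero mass to any finite set of points (energy measures of these self-similar forms are non-atomic), each $\nu_u^{\I_\alpha}(\mcC_{\rm H})=0$ and $\nu_u^{\SG_\alpha}(\mcC_{\rm H})=0$, and summing the countably many zeros gives $\nu_u(\mcC_{\rm H})=0$. Finally, a density argument as above upgrades this from the dense class to all of $\dom\E$, proving that the energy measure does not charge $\mcC_{\rm H}$. The main obstacle I anticipate is making rigorous the passage from the form identity~\eqref{eq:SG3.14} to the corresponding identity for energy \emph{measures}, i.e.\ justifying that the countable decomposition is compatible with the localization defining $\nu_u$; once that is in hand, the null-set estimate is essentially geometric and routine.
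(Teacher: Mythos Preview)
Your proposal is correct and follows the same route as the paper: the corollary is stated there without a detailed proof, being presented as an immediate consequence of Theorem~\ref{thm:SG3.02} and the characterization~\eqref{eq:SG3.14}, and your argument is precisely the natural unpacking of that deduction. The only minor comment is that your density reduction to $\cup_{n\geq 1}\widetilde{\F}_n$ is redundant, since your decomposition of $\nu_u$ already applies directly to every $u\in\dom\E$; but this causes no harm.
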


\section{Hybrids with dihedral-3 symmetric base}\label{section:HD3S}
The first straightforward extension of the results presented in the previous section concerns hybrids whose base $K_\ast$ is a \textit{Sierpinski-like fractal}, i.e. a p.c.f. self-similar set with diahedral-3 symmetry. These sets were introduced and studied in~\cite{Str00}. From the geometric point of view, a set of this type is generated by an iterated function system (i.f.s.)\ 
of similarities in $\mbbR^2$ 
with contraction ratio $c_*\in(0,1)$. Its boundary $V_0$ consists of three points and there exists a group of homeomorphisms of $K_\ast$ isomorphic to $D_3$ that acts as permutations on $V_0$ and preserves that self-similar structure of $K_\ast$.

\begin{definition}\label{def:HD3S.01}
Let $K_*$ be a Sierpinski-like fractal in $\mbbR^2$ with associated i.f.s.\ $\{\phi_{*,i}\}_{i\in\mcS}$ and let $V_0$ denote its boundary. 
\begin{itemize}[leftmargin=2em]
\item[(i)] Let $\mcC_*$ be the self-similar Cantor set generated by the i.f.s.\ $\{\phi_{i}\}_{i\in\mcS}$, where each $\phi_i$ is the similitude in $\mbbR^2$ defined by substituting in $\phi_{*,i}$ the contraction ratio $c_*$ by a smaller one $0<c<c_*$.
\item[(ii)] Let $B:=\{(i,j)\in\mcS^2~|~i\neq j\}$ and let $\{K_{(i,j)}\}_{(i,j)\in B}$ be a family of p.c.f.\ self-similar sets connected one another through their boundary points in such a way that $\partial K_{(i,j)}=K_{(i,j)}\cap(\phi_i(V_0)\cup\phi_j(V_0))$. The unique non-empty compact set satisfying
\begin{equation*}\label{eq:HD3S.01}
{\rm H}_{K_*}=\bigcup_{i\in\mcS}\phi_i({\rm H}_{K_*})\cup\bigcup_{j\in B}K_j
\end{equation*}
is called the hybrid fractal of base $K_*$ and bonds $\{K_{(i,j)}\}_{(i,j)\in B}$.
\item[(iii)] The set $\mcC_*$ is called the fractal dust associated with ${\rm H}_{K_*}$.
\end{itemize}
\end{definition}

From the analytic point of view, see~\cite[Section 5]{Str00}, and because of the dihedral-3 symmetry, 
the base fractal $K_*$ has an associated harmonic structure $(D,\bm{r})$, where 
\begin{equation*}\label{eq:HD3S.02}
D=\lambda\begin{pmatrix}
-2&1&1\\
1&-2&1\\
1&1&-2
\end{pmatrix}
\end{equation*}
for some $\lambda >0$ and $\bm{r}=(r_1,r_2,r_3)$ is the vector of weights/resistances. We will assume that $r_1=r_2=r_3=r$ and without loss of generality take $\lambda=1$. 

\medskip

Furthermore, each ``bond'' $K_{(i,j)}$ is equipped with a harmonic structure $(D_{(i,j)},\bm{r}_{(i,j)})$ that leads to a resistance form $(\E^{K_{(i,j)}},\dom\E^{K_{(i,j)}})$ on $K_{(i,j)}$. Again, write $\phi_\alpha=\phi_{\alpha_1}\circ\cdots\circ\phi_{\alpha_n}$ for each word $\alpha\in\mcS^n$ with $n\geq 1$ and $\phi_{\text{\o}}={\rm id}$. Following mutatis mutandis the previous section we obtain a characterizable graph-directed self-similar local and regular resistance form on ${\rm H}_{K_*}$.

\begin{theorem}\label{thm:HD3S.01}
\begin{itemize}[leftmargin=.25in]
\item[(i)]There exists a unique local and regular resistance form $(\E,\dom\E)$ on ${\rm H}_{K_*}$ such that
\begin{equation*}\label{eq:HD3S.03}
\E(u,v)=\sum_{i\in\mcS}r\E(u\circ\phi_i,v\circ\phi_i)+\sum_{(i,j)\in B}\E^{K_{(i,j)}}(u_{|_{K_{(i,j)}}},v_{|_{K_{(i,j)}}})
\end{equation*}
for any $u,v\in\dom\E$.
\item[(ii)] The resistance form $(\E,\dom\E)$ is given by
\begin{multline}\label{eq:HD3S.04}
\dom\E=\{u\in C({\rm H}_{K_*})~|~\text{there exists}~\{u_n\}_{n\geq 1}\subseteq\cup_{n\geq 1}\widetilde{\F}_n\text{ such that}\\
\lim_{n\to\infty}\E(u-u_{n},u-u_{n})=0\text{ and }\lim_{n\to\infty}u_{n}(x)=u(x)~\forall~x\in {\rm H}\}
\end{multline}
and 
\begin{equation*}\label{eq:HD3S.05}
\E(u,v)=\sum_{k=0}^\infty\sum_{(\alpha,(i,j))\in \mcS^k\times B}\E_k^{K_{\alpha,ij}}(u,v),
\end{equation*}
where for each $k\geq 0$ and $u,v\in\dom\E$,
\begin{equation*}\label{eq:HD3S.06}
\E_k^{K_{\alpha,ij}}(u,v)=\frac{1}{r^{k}}\E^{K_{(i,j)}}(u\circ\phi_\alpha{}_{|_{K_{(i,j)}}},v\circ\phi_\alpha{}_{|_{K_{(i,j)}}}).
\end{equation*}
\end{itemize}
\end{theorem}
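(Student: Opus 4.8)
The plan is to follow the blueprint of Section~\ref{section:SG3} step by step, since the construction there was carried out with the specific pieces (intervals and inverted $\SG$s) but the arguments only used their abstract features as p.c.f.\ self-similar sets with resistance forms $(\E^{K_{(i,j)}},\dom\E^{K_{(i,j)}})$. Concretely, for each $n\geq 0$ one builds the graph $\Gamma_n=(V_n,E_n,r_n)$ by the recursive substitution of Definition~\ref{def:HD3S.01}: triangular cells are subdivided according to the i.f.s.\ of $K_*$, while each bond ``born'' at level $k$ contributes its own harmonic-structure network scaled by $r^{k-1}$. The first task is to verify that this sequence of discrete energies is compatible. As in Theorem~\ref{thm:SG3.01}(i), compatibility reduces — because of the self-similar choice of resistances and the dihedral-$3$ symmetry of $K_*$ — to the single requirement that the network consisting of one triangular cell of resistance $R$ between each pair of boundary points be electrically equivalent to its first-level subdivision, the latter being a combination of $r R$-triangles and the fixed bond networks $K_{(i,j)}$. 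By a $\Delta$--Y computation this yields one algebraic equation relating $R$, $r$ and the (fixed) effective resistances of the bonds; under the dihedral-$3$ symmetry this equation is scalar and, exactly as in Remark~\ref{rem:SG3.02}, has a unique admissible positive solution $R=R(r)$ for $r$ below the resistance scaling factor of $K_*$. This gives the existence in part~(i) via the limit~\eqref{eq:SG3.04} together with the standard results of~\cite[Section 3]{Kig01}, which also deliver locality, regularity and the H\"older continuity and topological statements analogous to Theorem~\ref{thm:SG3.01}(iii)--(v). Uniqueness of the graph-directed self-similar resistance form for the prescribed resistances is~\cite[Section 7]{HMT06}.

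Next I would establish part~(ii), i.e.\ the explicit description of the domain and of $\E$ as a countable sum. Following Definition~\ref{def:SG3.03} verbatim, set $\widetilde{\F}_n=\{u\in\dom\E : u \text{ is constant on every } n\text{-cell}\}$, let $\widetilde{\E}_n=\E_{|_{\widetilde{\F}_n\times\widetilde{\F}_n}}$ with associated pseudometrics $\widetilde{R}_n$, and argue as in~\cite[Theorem 5.2]{ARKT16} that $\widetilde{R}_n\to\widetilde{R}$, a metric on ${\rm H}_{K_*}$ dominated by $R_\E$ and inducing the Euclidean topology. One then defines, for $k\geq 0$, $\alpha\in\mcS^k$, the rescaled energies $\E_k^{K_{\alpha,ij}}(u,v)=r^{-k}\E^{K_{(i,j)}}(u\circ\phi_\alpha{}_{|_{K_{(i,j)}}},v\circ\phi_\alpha{}_{|_{K_{(i,j)}}})$, and the analogues of Proposition~\ref{prop:SG3.01} and Theorem~\ref{thm:MT01} (in the abstract finitely ramified cell structure language of Section~\ref{section:FRCS}, which applies here) give that the pair $(\widetilde{\E},\dom\widetilde{\E})$ defined by $\widetilde{\E}(u,v)=\sum_{k\geq 0}\sum_{(\alpha,(i,j))}\E_k^{K_{\alpha,ij}}(u,v)$ with the stated domain is a resistance form. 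The crucial point, exactly as in Lemma~\ref{lemma:SG3.01}, is that this $\widetilde{\E}$ is graph-directed self-similar, i.e.\ satisfies the identity in part~(i): one checks that $u\in\dom\widetilde{\E}$ implies $u\circ\phi_i\in\dom\widetilde{\E}$ by pushing forward the approximating sequence (each $u_n\in\widetilde{\F}_n$ gives $u_n\circ\phi_i\in\widetilde{\F}_{n+1}$) and using the energy-scaling inequality $\widetilde{\E}(u\circ\phi_i-u_n\circ\phi_i,\cdot)\leq r^{-1}\E(u-u_n,\cdot)$. Since the graph-directed self-similar resistance form with these resistances is unique, $(\E,\dom\E)$ and $(\widetilde{\E},\dom\widetilde{\E})$ coincide, which is precisely the content of part~(ii).

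The main obstacle I expect is part~(i), specifically checking that the compatibility equation still has a unique admissible root in the general setting. In Section~\ref{section:SG3} the bonds were explicit ($\I$ and $\SG$) so the $\Delta$--Y reduction produced the concrete formula~\eqref{eq:SG3.05}; here the bonds $K_{(i,j)}$ are arbitrary p.c.f.\ sets, so one has to argue abstractly. The dihedral-$3$ symmetry of $K_*$ forces the three pairwise effective resistances of a subdivided triangular cell to be equal, hence the matrix $D$ of the limit form on $V_0$ stays in the one-parameter family $\lambda\,(\,-2\ 1\ 1;\ 1\ -2\ 1;\ 1\ 1\ -2\,)$ and compatibility becomes a single scalar fixed-point equation $f(R,r)=R$; monotonicity of the series/parallel combinations in $R$ (the right-hand side is a concave increasing function of $R$ that starts positive at $R=0$) gives existence and uniqueness of the positive solution, and the blow-up of $R$ as $r$ reaches the critical value of $K_*$ reproduces Remark~\ref{rem:SG3.02}. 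One must also be mildly careful that $r$ be chosen so that $R(r)>0$ and the resulting harmonic structure is nondegenerate; this is the constraint $0<r<$ (resistance scaling factor of $K_*$). Everything else is a routine transcription of Section~\ref{section:SG3}.
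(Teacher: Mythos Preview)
Your proposal is correct and follows essentially the same route as the paper, which simply says the theorem follows ``mutatis mutandis'' from Section~\ref{section:SG3} and records in Remark~\ref{rem:HD3S.01} that the key ingredients are the analogue of Lemma~\ref{lemma:SG3.01} and the uniqueness result from~\cite[Section 7]{HMT06}. You actually supply more detail than the paper on the compatibility equation in part~(i), arguing via the dihedral-$3$ symmetry and a monotonicity/fixed-point argument for the scalar equation $f(R,r)=R$; the paper leaves this entirely implicit.
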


\begin{remark}\label{rem:HD3S.01}
Once again, the self-similar choice of the resistances ($r$ for the base fractal and $1$ for the bonds) as well as the self-similar structure imposed by the Sierpinski-like fractal guarantee the uniqueness of a self-similar graph-directed energy for these fixed parameters.
The key fact that leads to the characterization~\eqref{eq:HD3S.04} is the analogue of Lemma~\ref{lemma:SG3.01} and the uniqueness of the resistance form $(\E,\dom\E)$. These hold for any graph-directed graph equipped with a self-similar energy.
\end{remark}


%

\section{Characterization condition of energy on finitely ramified cell structures}\label{section:FRCS}
So far we have discussed (graph-directed) self-similar energies on different types of fractals and it has turned out that they consist of a countable sum of energies on copies of the building blocks. All these sets fall under a larger class of spaces called \textit{finitely ramified cell structures} that were introduced in~\cite{Tep08}. In this section we investigate the possibility of characterizing non self-similar energies within this more abstract setting and give a condition under which they admit the same characterization as in the self-similar case.
\subsection{Resistance forms on finitely ramified cell structures}
We start by introducing the concept of a finitely ramified cell structure from~\cite[Definition 2.1]{Tep08} for an arbitrary countable set. Basic definitions and standard notation for resistance forms are reviewed in the Appendix.
\begin{definition}\label{def:FRCS01}
A countable set $V_*$ is said to support a finitely ramified cell structure if there exist an index set $\mcA$, a cell structure $\{K_\alpha\}_{\alpha\in\mcA}$ and a family of weighted graphs $\{(V_\alpha,E_\alpha,r_\alpha)\}_{\alpha\in\mcA}$ 
that satisfy the following properties.
\begin{enumerate}[leftmargin=.3in,label=(\alph*)]
\item $\mcA$ is a countable set,
\item each $K_\alpha$ is a distinct countable subset of $V_*$,
\item each $V_\alpha\subsetneq K_\alpha$ is finite and has at least two elements,
\item for each $x,y\in V_\alpha$, $\{x,y\}\in E_\alpha$ if and only if $0<r_\alpha(x,y)<\infty$,
\item if $K_\alpha=\bigcup_{i=1}^kK_{\alpha_i}$, then $V_{\alpha}\subseteq \bigcup_{i=1}^k V_{\alpha_i}$,
\item there exists a filtration $\{\mcA_n\}_{n\geq 0}$ such that
\begin{enumerate}[leftmargin=*]
\item[(f1)] $\mcA_n$ are finite subsets of $\mcA$, $\mcA_0=\{\text{\o}\}$ and $K_{\text{\o}}=V_*$,
\item[(f2)] $\mcA_m\cap\mcA_n=\emptyset$ if $m\neq n$,
\item[(f3)] for any $\alpha\in\mcA_n$ there exist $\alpha_1,\ldots,\alpha_k\in\mcA_{n+1}$ such that $K_\alpha=\bigcup_{i=1}^kK_{\alpha_i}$, 
\end{enumerate}
\item $K_\alpha\cap K_{\alpha'}=V_\alpha\cap V_{\alpha'}$ for any $\alpha\neq\alpha'$ in $\mcA$,
\item for any strictly decreasing infinite cell sequence $K_{\alpha_1}\supsetneq K_{\alpha_2}\supsetneq\ldots$ there exists $x\in V_*$ such that $\bigcap_{n\geq 1}K_{\alpha_n}=\{x\}$.
\end{enumerate}
\end{definition}
Any triple $(V_*,\{K_\alpha\}_{\alpha\in\mcA},\{(V_\alpha,E_\alpha,r_\alpha)\}_{\alpha\in\mcA})$ is called a \textit{finitely ramified cell structure}.

\begin{notation}
For each $n\geq 0$, set $V_n=\bigcup_{\alpha\in\mcA_n}V_\alpha$ and $V_*=\bigcup_{n\geq 0}V_n$. For each $\alpha\in\mcA$, we write $V_{\alpha,n}=K_\alpha\cap V_n$. Notice that $K_\alpha=\bigcup_{n\geq 0} V_{\alpha,n}$.
\end{notation}
\begin{definition}\label{def:FRCS02}
For any $\alpha\in\mcA$, define the bilinear form $\E_\alpha\colon\ell(V_\alpha)\times\ell(V_\alpha)\to\mbbR$ as the graph energy associated with $(V_\alpha,E_\alpha,r_\alpha)$, i.e.
\begin{equation*}\label{eq:FRCS01}
\E_\alpha(u,v)=\sum_{\{x,y\}\in E_\alpha}\frac{1}{r_\alpha(x,y)}(u(x)-u(y))(v(x)-v(y))
\end{equation*}
for any $u,v\in\ell(V_\alpha)$. For each $n\geq 0$, define $\E_n\colon\ell(V_n)\times\ell(V_n)\to\mbbR$ as
\begin{equation}\label{eq:FRCS02}
\E_n(u,v)=\sum_{\alpha\in\mcA_n}\E_\alpha(u_{|_{V_\alpha}},v_{|_{V_\alpha}})
\end{equation}
for any $u,v\in\ell(V_n)$.
\end{definition}
The following lemma due to Kigami gives a necessary and sufficient condition for the weight functions $r_\alpha$, $\alpha\in\mcA_n$ to determine a resistance form on $V_n$. 
\begin{lemma}{\cite[Lemma 10.3]{ARFK17}}\label{lemma:FRCS01}
Let $V$ be a finite set. A pair $(\E,\ell(V))$ is a resistance form on $V$ if and only if there exists a weight function $c_V\colon V\times V\to [0,\infty)$, such that for any $x\neq y$, there exist $m\geq 0$ and $x_0,\ldots ,x_m\in V$ with the property that $x_0=x$, $x_m=y$ and $c_V(x_i,x_{i+1})>0$ for any $i=0,\ldots, m-1$, and
\[
\E(u,v)=\frac{1}{2}\sum_{x,y\in V}c_V(x,y)(u(x)-u(y))(v(x)-v(y))
\]
for all $u,v\in\ell(V)$.
\end{lemma}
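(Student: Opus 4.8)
The plan is to prove the lemma directly from the axioms of a resistance form recalled in the Appendix, treating the two implications separately; the content is the classical identification of resistance forms on a finite set with the energies of connected weighted graphs (equivalently, symmetric Markov forms with no killing part).

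First I would treat the ``if'' direction. Assuming a weight function $c_V$ with the stated path property, define $\E$ by the displayed formula. Symmetry, bilinearity and nonnegativity are immediate since $c_V\geq 0$. If $\E(u,u)=0$, then $u(x)=u(y)$ whenever $c_V(x,y)>0$, and walking along a positive-weight path shows $u$ is constant; conversely $\E$ annihilates constants, so its radical is exactly $\mbbR\mathbf 1$. Since $V$ is finite, $\F=\ell(V)$ is automatic, $(\ell(V)/\mbbR\mathbf 1,\E)$ is a finite-dimensional inner product space hence a Hilbert space, and for fixed $x\neq y$ the functional $[u]\mapsto u(x)-u(y)$ on that quotient is bounded, which gives $R(x,y)<\infty$. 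Finally, for any $u$ the unit contraction $\bar u=\min(\max(u,0),1)$ satisfies $|\bar u(x)-\bar u(y)|\leq|u(x)-u(y)|$ for all $x,y$, so the formula directly yields $\E(\bar u,\bar u)\leq\E(u,u)$; hence $(\E,\ell(V))$ is a resistance form.

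Next I would treat the ``only if'' direction. Given a resistance form $(\E,\ell(V))$, write $\E(u,v)=\sum_{x,y\in V}H(x,y)u(x)v(y)$ with $H$ symmetric. Since $\mbbR\mathbf 1$ lies in the radical and $\E\geq 0$, Cauchy--Schwarz gives $\E(\mathbf 1,v)=0$ for all $v$, so all row and column sums of $H$ vanish; a direct expansion then yields the identity $\E(u,v)=\frac{1}{2}\sum_{x,y}(-H(x,y))(u(x)-u(y))(v(x)-v(y))$, so setting $c_V(x,y):=-H(x,y)$ for $x\neq y$ and $c_V(x,x):=0$ already produces the required representation. It remains to check $c_V\geq 0$ and the path property. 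For $c_V\geq 0$, fix $x_0\neq y_0$, put $u=\mathbf 1_{V\setminus\{y_0\}}$ and $u_t=u+t\,\mathbf 1_{\{x_0\}}$ for $t\geq 0$; since $u_t$ equals $1+t\geq 1$ at $x_0$, equals $1$ elsewhere on $V\setminus\{y_0\}$ and equals $0$ at $y_0$, its unit contraction is exactly $u$, so the Markov property forces $\E(u,u)=\E(\overline{u_t},\overline{u_t})\leq\E(u_t,u_t)=\E(u,u)+2t\,\E(u,\mathbf 1_{\{x_0\}})+t^2\E(\mathbf 1_{\{x_0\}},\mathbf 1_{\{x_0\}})$; dividing by $t$ and letting $t\downarrow 0$ gives $\E(u,\mathbf 1_{\{x_0\}})\geq 0$, while expanding the left-hand side and using the vanishing column sum of $H$ at $x_0$ gives $\E(u,\mathbf 1_{\{x_0\}})=-H(y_0,x_0)$, hence $H(x_0,y_0)\leq 0$, i.e.\ $c_V(x_0,y_0)\geq 0$. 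For the path property, if some pair $x\neq y$ admitted no positive-weight path, then $V$ would split as $A\sqcup B$ with $x\in A$, $y\in B$ and $c_V(a,b)=0$ for all $a\in A$, $b\in B$, so $\mathbf 1_A$ would be a non-constant function with $\E(\mathbf 1_A,\mathbf 1_A)=0$, contradicting the axiom that only constants lie in the radical of $\E$.

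The one genuinely delicate point is $c_V\geq 0$ in the ``only if'' direction: a nonnegative symmetric bilinear form on $\ell(V)$ annihilating constants need not have nonpositive off-diagonal matrix entries, so one must really invoke the unit-contraction property of a resistance form, and the one-parameter perturbation above is exactly the device that converts it into a sign condition. Everything else — the axiom checks in the ``if'' direction and the ``difference form'' expansion — is routine bookkeeping.
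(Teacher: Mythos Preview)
The paper does not prove this lemma at all: it is quoted verbatim from \cite[Lemma 10.3]{ARFK17} and attributed to Kigami, with no argument given. So there is no ``paper's own proof'' to compare against.

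Your proof is correct and is essentially the standard one. The ``if'' direction is routine; you might add one word noting that (RF3) is trivially satisfied because $\F=\ell(V)$ contains all indicator functions, but this is an omission of a triviality rather than a gap. In the ``only if'' direction your two key steps---the row-sum-zero expansion to get the difference representation, and the one-parameter perturbation $u_t=u+t\mathbf 1_{\{x_0\}}$ combined with the unit-contraction axiom to extract the sign of the off-diagonal entries---are exactly the right devices, and your computation $\E(u,\mathbf 1_{\{x_0\}})=-H(y_0,x_0)$ via the vanishing column sum is clean. Your closing remark that nonnegativity of $c_V$ genuinely requires (RF5), not just positive semidefiniteness plus annihilation of constants, is also correct and worth emphasizing.
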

The definition of harmonic structures in the p.c.f. self-similar setting, c.f.~\cite[Definition 3.1.2]{Kig01}, can be carried to finitely ramified cell structures.

\begin{definition}\label{def:FRCS03}
A finitely ramified cell structure $(V_*,\{K_\alpha\}_{\alpha\in\mcA},\{(V_\alpha,E_\alpha,r_\alpha)\}_{\alpha\in\mcA})$ is said to be harmonic if there exists a filtration $\{\mcA_n\}_{n\geq 0}$ such that the sequence of resistance forms $\{(\E_n,\ell(V_n))\}_{n\geq 0}$ given by~\eqref{eq:FRCS02} is compatible in the sense of Kigami~\cite[Definition 3.12]{Kig12}.
\end{definition}
In view of~\cite[Theorem 3.13 and Theorem 3.14]{Kig12}, a  harmonic finitely ramified cell structure leads to a resistance form $(\E,\F)$ on the countable set $V_*$. From now on and throughout the paper we will only consider this type of finitely ramified cell structures.
\begin{remark}\label{rem:FRCS01}
By construction, the resistance form $(\E,\dom\E)$ is local: if $\supp u\,\cap\,\supp v=\emptyset$, then  we find disjoint $n$-cells $K_\alpha$, $K_{\alpha'}$ such that $\supp u\subseteq K_\alpha$ and $\supp v\subseteq K_{\alpha'}$. In virtue of~\eqref{eq:FRCS02}, $\E_n(u,v)=0$ and hence $\E(u,v)=0$.
\end{remark}
The next proposition employs the concept of harmonic extension and trace of a resistance form, which are recalled in Definition~\ref{def:DB.RF03} and Definition~\ref{def:DB.RF04} respectively.
\begin{proposition}\label{prop:DB.FRCS01}
Let $\alpha\in\mcA$. For each $n\geq 0$, define the resistance form $(\E_{\alpha,n},\ell(V_{\alpha,n}))$ as the restriction of $(\E|_{V_n},\ell(V_n))$ to $V_{\alpha,n}$. The bilinear form $(\E_{K_\alpha},\F_{K_\alpha})$ given by
\begin{equation}\label{eq:DB.FRCS01}
\E_{K_\alpha}(u,u)=\lim_{n\to\infty}\E_{\alpha,n}(u_{|_{V_{\alpha,n}}}u_{|_{V_{\alpha,n}}})
\end{equation}
for any $u\in\F_\alpha=\{u\colon K_\alpha\to\mbbR~|~\E_\alpha(u,u)<\infty\}$, is a resistance form on $K_\alpha$.
\end{proposition}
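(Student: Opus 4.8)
The strategy is to recognize $\{(\E_{\alpha,n},\ell(V_{\alpha,n}))\}_{n\geq 0}$ as a \emph{compatible} sequence of resistance forms in the sense of Kigami and then to apply the same results, \cite[Theorems 3.13 and 3.14]{Kig12}, that produced the resistance form $(\E,\F)$ on $V_*$. Write $\mathrm{Tr}_W\E$ for the trace (restriction) of a resistance form $\E$ onto a finite subset $W$ of its underlying set, see Definition~\ref{def:DB.RF04}, and recall that the trace onto a nonempty finite subset is always a resistance form. Granting compatibility, the proposition follows at once: the sequence $n\mapsto\E_{\alpha,n}(u_{|_{V_{\alpha,n}}},u_{|_{V_{\alpha,n}}})$ is then non-decreasing, so the limit in~\eqref{eq:DB.FRCS01} exists in $[0,\infty]$ for every $u\colon K_\alpha\to\mbbR$, and \cite[Theorems 3.13 and 3.14]{Kig12} yield that $(\E_{K_\alpha},\F_{K_\alpha})$ is a resistance form on $\bigcup_{n\geq 0}V_{\alpha,n}=K_\alpha$, with $\F_{K_\alpha}$ the set of $u$ for which the limit is finite.

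The preliminary observations are routine. Fix $n_0$ with $\alpha\in\mcA_{n_0}$. For $n\geq n_0$ one has $V_\alpha\subseteq V_{\alpha,n}\subseteq V_n$, so $V_{\alpha,n}$ is a finite set with at least two elements by Definition~\ref{def:FRCS01}(c), and $V_{\alpha,n}\subseteq V_{\alpha,n+1}$, since $V_n\subseteq V_{n+1}$ (a consequence of properties (e) and (f3)). Because the finitely ramified cell structure is harmonic, each $(\E_n,\ell(V_n))$ is a resistance form on $V_n$ with $\E|_{V_n}=\E_n$, hence each $(\E_{\alpha,n},\ell(V_{\alpha,n}))=(\mathrm{Tr}_{V_{\alpha,n}}\E_n,\ell(V_{\alpha,n}))$ is a resistance form. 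Discarding the finitely many indices $n<n_0$ changes neither the limit nor the domain, so it suffices to work with $n\geq n_0$.

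The crux is the compatibility identity $\E_{\alpha,n}=\mathrm{Tr}_{V_{\alpha,n}}\E_{\alpha,n+1}$, and it combines two facts: the transitivity of the trace, $\mathrm{Tr}_W(\mathrm{Tr}_U\E)=\mathrm{Tr}_W\E$ whenever $W\subseteq U$, and the harmonicity of the cell structure, i.e.\ $\E_n=\mathrm{Tr}_{V_n}\E_{n+1}$. Using the inclusions $V_{\alpha,n}\subseteq V_n\subseteq V_{n+1}$ and $V_{\alpha,n}\subseteq V_{\alpha,n+1}\subseteq V_{n+1}$ we chain them:
\begin{multline*}
\E_{\alpha,n}=\mathrm{Tr}_{V_{\alpha,n}}\E_n=\mathrm{Tr}_{V_{\alpha,n}}\bigl(\mathrm{Tr}_{V_n}\E_{n+1}\bigr)=\mathrm{Tr}_{V_{\alpha,n}}\E_{n+1}\\
=\mathrm{Tr}_{V_{\alpha,n}}\bigl(\mathrm{Tr}_{V_{\alpha,n+1}}\E_{n+1}\bigr)=\mathrm{Tr}_{V_{\alpha,n}}\E_{\alpha,n+1}.
\end{multline*}
In particular $\E_{\alpha,n}(u_{|_{V_{\alpha,n}}},u_{|_{V_{\alpha,n}}})\leq\E_{\alpha,n+1}(u_{|_{V_{\alpha,n+1}}},u_{|_{V_{\alpha,n+1}}})$, which is the monotonicity used above. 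Thus $\{(\E_{\alpha,n},\ell(V_{\alpha,n}))\}_{n\geq n_0}$ is a compatible sequence on the increasing finite sets $V_{\alpha,n}$ whose union is $K_\alpha$, and the cited theorems finish the proof. The only step that demands care is the trace transitivity, which hinges on the displayed inclusions being genuine nestings; as these hold by construction, the rest is bookkeeping.
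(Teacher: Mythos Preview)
Your proof is correct and follows essentially the same strategy as the paper: show that $\{(\E_{\alpha,n},\ell(V_{\alpha,n}))\}_{n\geq n_0}$ is a compatible sequence and then invoke \cite[Theorem 3.13]{Kig12}. The paper establishes compatibility by explicitly constructing the harmonic extension $\tilde{h}_{n+1}(u)$ of $u\in\ell(V_{\alpha,n})$ to $V_{\alpha,n+1}$ and verifying $\E_{\alpha,n+1}(\tilde{h}_{n+1}(u),\tilde{h}_{n+1}(u))=\E_{\alpha,n}(u,u)$, whereas you obtain the same identity more cleanly via trace transitivity, $\mathrm{Tr}_{V_{\alpha,n}}\E_{\alpha,n+1}=\mathrm{Tr}_{V_{\alpha,n}}\E_{n+1}=\mathrm{Tr}_{V_{\alpha,n}}\E_n=\E_{\alpha,n}$; these are two phrasings of the same fact.
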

\begin{proof}
Let $n\geq 0$ and $u\in\ell(V_{\alpha,n})$. Its harmonic extension $\tilde{h}_{n+1}(u)\in\ell(V_{\alpha,n+1})$ is defined as
\begin{equation*}\label{eq:DB.FRCS02}
\tilde{h}_{n+1}(u)=h_{V_{n+1}}(\tilde{u})_{|_{V_{\alpha,n+1}}},
\end{equation*}
where $\tilde{u}\in\ell(V_{n+1})$ is a function such that $\tilde{u}_{|_{V_{\alpha,n}}}=u$. Notice that this definition is independent of the choice of $\tilde{u}$. 
Then,
\begin{align*}\label{eq:DB.FRCS03}
\E_{\alpha,n+1}(\tilde{h}_{n+1}(u),\tilde{h}_{n+1}(u))&=\E(h_{V_{n+1}}(\tilde{u}),h_{V_{n+1}}(\tilde{u}))
=\E|_{V_n}(\tilde{u}_{|_{V_n}},\tilde{u}_{|_{V_n}})=\E_{\alpha,n}(u,u)
\end{align*}
and hence $\{(\E_{\alpha,n},\ell(V_{\alpha,n}))\}_{n\geq 0}$ is a compatible sequence of resistance forms. By~\cite[Theorem 3.13]{Kig12} it follows that the limit~\eqref{eq:DB.FRCS01} is a resistance form on $K_\alpha$. 
\end{proof}

\begin{definition}
The resistance form $(\E_{K_\alpha},\F_{K_\alpha})$ is called the restriction of $(\E,\F)$ to $K_\alpha$.
\end{definition}
\begin{remark}\label{rem:DB.FRCS01}
\begin{itemize}[leftmargin=.25in]
\item[(i)] The latter resistance form is different than the trace $(\E|_{K_\alpha},\F|_{K_\alpha})$.
\item[(ii)] For any $u\in\F_{K_\alpha}$, $\E_{K_\alpha}|_{V_\alpha}(u,u)=\E_\alpha(u_{|_{V_\alpha}},u_{|_{V_\alpha}})$.
\end{itemize}
\end{remark}
\begin{proposition}\label{prop:DB.FRCS02}
Let $m\geq 1$ be fixed. Then,
\begin{equation*}\label{eq:DB.FRCS04}
\E(u,u)=\sum_{\alpha\in\mcA_m}\E_{K_\alpha}(u_{|_{K_\alpha}},u_{|_{K_\alpha}})
\end{equation*}
for any $u\in\F$. Consequently, for any $u\in\F$ and $\alpha\in\mcA_m$, $u_{|_{K_\alpha}}\in\F_{K_\alpha}$.
\end{proposition}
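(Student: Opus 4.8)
The plan is to split the approximating graph energies $\E_n$ over the $m$-cells and then pass to the limit. Fix $m\ge 1$. Recall — this is precisely how the resistance form $(\E,\F)$ was obtained from the harmonic cell structure, cf.\ \cite[Theorem 3.13 and Theorem 3.14]{Kig12} — that $\E(u,u)=\lim_{n\to\infty}\E_n(u_{|_{V_n}},u_{|_{V_n}})$ for every $u\in\F$, a non-decreasing limit; together with Proposition~\ref{prop:DB.FRCS01} this is the only input needed.

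First I would set up the combinatorics of the filtration. For $n\ge m$ and $\alpha\in\mcA_m$ put $\mcA_n^{(\alpha)}=\{\beta\in\mcA_n:K_\beta\subseteq K_\alpha\}$. Iterating axiom (f3) inside $K_\alpha$ gives $K_\alpha=\bigcup_{\beta\in\mcA_n^{(\alpha)}}K_\beta$, and, by (f1)--(f3) together with (g), every $\beta\in\mcA_n$ is contained in $K_\alpha$ for exactly one $\alpha\in\mcA_m$, so $\{\mcA_n^{(\alpha)}\}_{\alpha\in\mcA_m}$ partitions $\mcA_n$; using (g) once more one also gets $V_{\alpha,n}=\bigcup_{\beta\in\mcA_n^{(\alpha)}}V_\beta$. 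Regrouping the sum in \eqref{eq:FRCS02} accordingly and using $V_\beta\subseteq V_{\alpha,n}$ for $\beta\in\mcA_n^{(\alpha)}$ yields, for every $v\in\ell(V_n)$,
\begin{equation*}
\E_n(v,v)=\sum_{\alpha\in\mcA_m}\;\sum_{\beta\in\mcA_n^{(\alpha)}}\E_\beta(v_{|_{V_\beta}},v_{|_{V_\beta}})=\sum_{\alpha\in\mcA_m}\E_{\alpha,n}(v_{|_{V_{\alpha,n}}},v_{|_{V_{\alpha,n}}}),
\end{equation*}
where $\E_{\alpha,n}$ is exactly the form appearing in Proposition~\ref{prop:DB.FRCS01}, namely the contribution of the $n$-cells contained in $K_\alpha$.

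Next I would take $v=u_{|_{V_n}}$ with $u\in\F$. Every summand above is non-negative, so $\E_{\alpha,n}(u_{|_{V_{\alpha,n}}},u_{|_{V_{\alpha,n}}})\le\E_n(u_{|_{V_n}},u_{|_{V_n}})\le\E(u,u)<\infty$ for all $\alpha\in\mcA_m$ and $n\ge m$; hence the non-decreasing sequence $\big(\E_{\alpha,n}(u_{|_{V_{\alpha,n}}},u_{|_{V_{\alpha,n}}})\big)_{n\ge m}$ converges, and by Proposition~\ref{prop:DB.FRCS01} its limit is $\E_{K_\alpha}(u_{|_{K_\alpha}},u_{|_{K_\alpha}})$. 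The finiteness of this limit is exactly the statement $u_{|_{K_\alpha}}\in\F_{K_\alpha}$ (note $u_{|_{K_\alpha}}\in\F_\alpha$ automatically, $\E_\alpha$ being a finite graph form). Finally, letting $n\to\infty$ in the displayed identity, the left-hand side tends to $\E(u,u)$, while on the right-hand side the limit passes through the sum because $\mcA_m$ is finite, producing $\sum_{\alpha\in\mcA_m}\E_{K_\alpha}(u_{|_{K_\alpha}},u_{|_{K_\alpha}})$. This is the asserted equality.

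The point I expect to require the most care is the combinatorial one: verifying that $\{\mcA_n^{(\alpha)}\}_{\alpha\in\mcA_m}$ is a genuine partition of $\mcA_n$ — no $n$-cell omitted or double-counted — and that the block of $\E_n$ indexed by $\mcA_n^{(\alpha)}$ coincides with the form $\E_{\alpha,n}$ built in Proposition~\ref{prop:DB.FRCS01}; this is where axioms (f1)--(f3) and (g) of Definition~\ref{def:FRCS01} are really used. Everything after that is a monotone-convergence argument whose only delicate point is the interchange of the limit with a \emph{finite} sum.
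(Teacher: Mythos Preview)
Your proof is correct and follows the same approach as the paper: decompose $\E_n$ over the $m$-cells via $V_n=\bigcup_{\alpha\in\mcA_m}V_{\alpha,n}$, identify each block with $\E_{\alpha,n}$, and pass to the limit using that $\mcA_m$ is finite. The paper's argument is a terse two-line version of what you wrote; your explicit verification that $\{\mcA_n^{(\alpha)}\}_{\alpha\in\mcA_m}$ partitions $\mcA_n$ and that the block sum coincides with the form $\E_{\alpha,n}$ of Proposition~\ref{prop:DB.FRCS01} fills in exactly the step the paper hides behind the word ``hence''.
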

\begin{proof}
Let $n\geq 1$ and $u_n:=u_{|_{V_n}}\in\ell(V_n)$. By definition, $V_n=\cup_{\alpha\in\mcA_m} V_{\alpha,n}$ and hence
\begin{equation*}\label{eq:DB.FRCS05}
\E_n(u_n,u_n)=\sum_{\alpha\in\mcA_m}\E_{\alpha,n}(u_n{}_{|_{V_{\alpha,n}}},u_n{}_{|_{V_{\alpha,n}}})=\sum_{\alpha\in\mcA_m}\E_{\alpha,n}(u_{|_{V_{\alpha,n}}},u_{|_{V_{\alpha,n}}}).
\end{equation*}
Letting $n\to\infty$ in both sides of the equality yields the result.
\end{proof}
\begin{remark}\label{rem:DB.FRCS02}
One might be tempted to write something like $\F=\bigoplus_{\alpha\in\mcA_n}\F_{K_\alpha}$. However this is not true.
\end{remark}

\subsection{Characterizable energy condition}
Let $(V_*,\{V_\alpha\}_{\alpha\in\mcA},\{(V_\alpha,E_\alpha,r_\alpha)\}_{\alpha\in\mcA})$ be a finitely ramified cell structure with filtration $\{\mcA_n\}_{n\geq 0}$ and assume that the following property is satisfied.

\begin{assumption}\label{ass:MR01}
For each $n\geq 0$, there exists a subset $\widetilde{\mcA}_n\subseteq \mcA_n$ such that 
for any $\alpha\in\widetilde{\mcA}_{n+1}$, $K_{\alpha}\subseteq K_{\alpha'}$ for some $\alpha'\in\widetilde{\mcA}_n$, and the set
\begin{equation}\label{eq:MR01.A1}
\mcC:=\bigcap_{n\geq 0}\bigcup_{\alpha\in\widetilde{\mcA}_n}K_\alpha
\end{equation}
is non-empty.
\end{assumption}

\begin{definition}\label{def:MR01}
\begin{enumerate}[leftmargin=*]
\item For each $n\geq 1$, define the bilinear form $(\widetilde{\E}_n,\widetilde{\F}_n)$ by
\begin{equation*}\label{eq:MR01}
\widetilde{\F}_n=\{u\in\F~|~u_{|_{K_\alpha}}\text{ is constant for all }\alpha\in \widetilde{\mcA}_n\}.
\end{equation*}
and $\widetilde{\E}_n=\E_{|_{\widetilde{\F}_n\times\widetilde{\F}_n}}$.

\item Define the bilinear form $(\widetilde{\E},\widetilde{\F})$ by
\begin{multline*}\label{eq:MR02}
\widetilde{\F}=\{u\in\F~|~\text{there exists}~\{u_{n}\}_{n\geq 1}\subseteq\cup_{n\geq 1}\widetilde{\F}_n\text{ such that}\\
\lim_{n\to\infty}\E(u-u_{n},u-u_{n})=0\text{ and }\lim_{n\to\infty}u_{n}(x)=u(x)~\forall~x\in K\}.
\end{multline*}
and $\widetilde{\E}=\E_{|_{\widetilde{\F}\times\widetilde{\F}}}$.
\end{enumerate}
\end{definition}
The next proposition tells us that the resistance form $(\widetilde{\E},\widetilde{\F})$ does not ``see'' the set $\mcC$.
\begin{proposition}\label{prop:MT01}
For any $u\in\widetilde{\F}$
\begin{equation*}\label{eq:MT03}
\widetilde{\E}(u,u)=\lim_{n\to\infty}\sum_{\alpha\in\mcA_n\setminus\widetilde{\mcA}_n}\E_\alpha(u_{|_{K_\alpha}},u_{|_{K_\alpha}}).
\end{equation*}
\end{proposition}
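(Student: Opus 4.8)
The plan is to show that the ``diagonal'' energy contribution coming from the cells indexed by $\widetilde{\mcA}_n$ vanishes in the limit, so that only the complementary sum survives. First I would fix $u\in\widetilde{\F}$ and, using Proposition~\ref{prop:DB.FRCS02} with $m=n$, write
\begin{equation*}
\E(u,u)=\sum_{\alpha\in\mcA_n}\E_{K_\alpha}(u_{|_{K_\alpha}},u_{|_{K_\alpha}})
=\sum_{\alpha\in\widetilde{\mcA}_n}\E_{K_\alpha}(u_{|_{K_\alpha}},u_{|_{K_\alpha}})+\sum_{\alpha\in\mcA_n\setminus\widetilde{\mcA}_n}\E_{K_\alpha}(u_{|_{K_\alpha}},u_{|_{K_\alpha}}).
\end{equation*}
Since $\widetilde{\E}=\E_{|_{\widetilde{\F}\times\widetilde{\F}}}$, it suffices to prove that the first sum tends to $0$ and that $\E_{K_\alpha}$ may be replaced by the graph energy $\E_\alpha$ in the second sum (the latter via Remark~\ref{rem:DB.FRCS01}(ii), modulo a further limit argument, since $\E_{K_\alpha}$ is a priori larger than $\E_\alpha$ on $\ell(V_\alpha)$ — this discrepancy is exactly why the statement is phrased with a limit rather than a plain equality).

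For the vanishing of the $\widetilde{\mcA}_n$-sum I would argue by approximation. Pick the sequence $\{u_m\}_{m\geq 1}\subseteq\cup_{m}\widetilde{\F}_m$ provided by the definition of $\widetilde{\F}$, with $\E(u-u_m,u-u_m)\to 0$. For $m$ fixed and $n\geq m$, every $u_m$ is constant on each $K_{\alpha'}$ with $\alpha'\in\widetilde{\mcA}_m$; by the nesting hypothesis in Assumption~\ref{ass:MR01}, each $K_\alpha$ with $\alpha\in\widetilde{\mcA}_n$ lies inside such a $K_{\alpha'}$, hence $u_m$ is constant on $K_\alpha$ and $\E_{K_\alpha}(u_m{}_{|_{K_\alpha}},u_m{}_{|_{K_\alpha}})=0$. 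Therefore, using the resistance-form inequality $|\E_{K_\alpha}(f,f)^{1/2}-\E_{K_\alpha}(g,g)^{1/2}|\le\E_{K_\alpha}(f-g,f-g)^{1/2}$ and summing the squares over $\alpha\in\widetilde{\mcA}_n$ (which is legitimate because $\sum_{\alpha\in\mcA_n}\E_{K_\alpha}((u-u_m)_{|_{K_\alpha}},(u-u_m)_{|_{K_\alpha}})=\E(u-u_m,u-u_m)$ by Proposition~\ref{prop:DB.FRCS02} again), I obtain
\begin{equation*}
\sum_{\alpha\in\widetilde{\mcA}_n}\E_{K_\alpha}(u_{|_{K_\alpha}},u_{|_{K_\alpha}})\le \E(u-u_m,u-u_m)
\end{equation*}
for every $n\ge m$. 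Letting $n\to\infty$ and then $m\to\infty$ forces $\lim_{n\to\infty}\sum_{\alpha\in\widetilde{\mcA}_n}\E_{K_\alpha}(u_{|_{K_\alpha}},u_{|_{K_\alpha}})=0$.

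Combining the two displays gives $\widetilde{\E}(u,u)=\E(u,u)=\lim_{n\to\infty}\sum_{\alpha\in\mcA_n\setminus\widetilde{\mcA}_n}\E_{K_\alpha}(u_{|_{K_\alpha}},u_{|_{K_\alpha}})$; the remaining task is to pass from $\E_{K_\alpha}$ to $\E_\alpha$. Here I would note that $\E_\alpha(u_{|_{V_\alpha}},u_{|_{V_\alpha}})\le\E_{K_\alpha}(u_{|_{K_\alpha}},u_{|_{K_\alpha}})$ always, with equality precisely when $u_{|_{K_\alpha}}$ is harmonic inside $K_\alpha$; applying the same approximation by $\widetilde{\F}_n$-functions (which \emph{are} harmonic — indeed constant — on the deleted cells but whose restrictions to the cells in $\mcA_n\setminus\widetilde{\mcA}_n$ become finer and finer) together with the telescoping identity $\E_n(u_n,u_n)=\sum_{\alpha\in\mcA_n}\E_\alpha$ and the compatibility of the $\E_n$, the two sums are squeezed together in the limit. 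The main obstacle is precisely this last point: keeping careful track of the difference between the graph energy $\E_\alpha$ on $V_\alpha$ and the restricted resistance form $\E_{K_\alpha}$ on all of $K_\alpha$ (Remark~\ref{rem:DB.FRCS01}(i) warns they differ), and ensuring that the error terms are controlled uniformly by $\E(u-u_n,u-u_n)\to 0$ rather than merely pointwise — this is where the continuity/pointwise-convergence clause built into the definition of $\widetilde{\F}$ does real work.
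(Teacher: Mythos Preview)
The paper states this proposition without proof, so there is no argument of its own to compare against. Your core idea---showing that the contribution from $\widetilde{\mcA}_n$ vanishes by approximating $u$ with functions in $\widetilde{\F}_m$ and using the nesting in Assumption~\ref{ass:MR01}---is correct and is exactly what is needed. However, you have made the argument harder than necessary by starting from the decomposition $\E(u,u)=\sum_{\alpha\in\mcA_n}\E_{K_\alpha}(u_{|_{K_\alpha}},u_{|_{K_\alpha}})$ of Proposition~\ref{prop:DB.FRCS02}. This forces you at the end to pass from $\E_{K_\alpha}$ back to the graph energy $\E_\alpha$, which is the step you yourself flag as the ``main obstacle'' and for which your sketch remains vague.

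That obstacle disappears if you work with $\E_\alpha$ from the outset. By the definition of $(\E,\F)$ as the limit of the compatible sequence and by~\eqref{eq:FRCS02},
\[
\widetilde{\E}(u,u)=\E(u,u)=\lim_{n\to\infty}\E_n(u_{|_{V_n}},u_{|_{V_n}})
=\lim_{n\to\infty}\Big(\sum_{\alpha\in\widetilde{\mcA}_n}\E_\alpha(u_{|_{V_\alpha}},u_{|_{V_\alpha}})
+\sum_{\alpha\in\mcA_n\setminus\widetilde{\mcA}_n}\E_\alpha(u_{|_{V_\alpha}},u_{|_{V_\alpha}})\Big).
\]
Now run your approximation argument directly on the first sum: for $u_m\in\widetilde{\F}_m$ and $n\ge m$, each $u_m$ is constant on $V_\alpha$ for $\alpha\in\widetilde{\mcA}_n$, so $\E_\alpha(u_m{}_{|_{V_\alpha}},u_m{}_{|_{V_\alpha}})=0$ and hence
\[
\sum_{\alpha\in\widetilde{\mcA}_n}\E_\alpha(u_{|_{V_\alpha}},u_{|_{V_\alpha}})
=\sum_{\alpha\in\widetilde{\mcA}_n}\E_\alpha\big((u-u_m)_{|_{V_\alpha}},(u-u_m)_{|_{V_\alpha}}\big)
\le \E_n\big((u-u_m)_{|_{V_n}},(u-u_m)_{|_{V_n}}\big)\le \E(u-u_m,u-u_m),
\]
the last inequality by compatibility ($\E_n\le\E$). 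Letting $n\to\infty$ and then $m\to\infty$ gives the result immediately, with no need to compare $\E_\alpha$ and $\E_{K_\alpha}$ at all.
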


\begin{lemma}\label{lemma:MR01}
For any distinct $x,y\in V_*$, there exists $u\in\widetilde{\F}$ such that $u(x)\neq u(y)$.
\end{lemma}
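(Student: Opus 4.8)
The goal is to separate two distinct points $x,y\in V_*$ by a function in $\widetilde\F$. My plan is to first reduce the problem to a finite stage of the filtration and then build the separating function by hand on a single graph, extending it carefully to $V_*$.

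First I would fix $n$ large enough that $x,y\in V_n$ and moreover that $x$ and $y$ lie in distinct cells at level $n$ that are ``far'' from $\mcC$ in the following sense: using Assumption~\ref{ass:MR01} and the fact that for a strictly decreasing cell sequence $\bigcap K_{\alpha_k}$ is a single point (property (h) of Definition~\ref{def:FRCS01}), I would argue that at least one of $x$, $y$ — say $y$ — can be taken to lie outside $\bigcup_{\alpha\in\widetilde\mcA_m}K_\alpha$ for some $m$, or else to be separated from $x$ by a cell not belonging to $\widetilde\mcA_m$. Concretely, since $x\neq y$ there is a first level $m$ at which some cell $K_\beta\in\mcA_m$ contains exactly one of them; I want to choose things so that the ``cut'' realizing the separation uses an edge of some $\E_\alpha$ with $\alpha\in\mcA_m\setminus\widetilde\mcA_m$. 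If both $x$ and $y$ lie in a common chain of $\widetilde\mcA_n$-cells for every $n$, then by (h) they would coincide, a contradiction; hence such an $m$ and such a non-tilde cell exist.

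Next, on the finite graph $(V_m,E_m,r_m)$ I would define a function $u_m\in\ell(V_m)$ that is constant on every $K_\alpha$ with $\alpha\in\widetilde\mcA_m$ (so that the eventual extension lands in $\widetilde\F_m\subseteq\widetilde\F$) and that takes different values at $x$ and $y$: e.g. set $u_m\equiv 0$ on all of $V_m$ except on the cell(s) separating $x$ from $y$ along the non-tilde part, where I push the value to $1$; because the $\widetilde\mcA_m$-cells on which constancy is required are pairwise ``compatible'' with this assignment (they meet only at boundary vertices, by property (g)), this is consistent. Since $(\E_m,\ell(V_m))$ is a resistance form, it is in particular defined on all of $\ell(V_m)$, so $\E_m(u_m,u_m)<\infty$. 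Then I would take the harmonic extension of $u_m$ to $V_*$ provided by the compatible sequence $\{(\E_n,\ell(V_n))\}_{n\ge 0}$: the resulting $u\in\F$ satisfies $\E(u,u)=\E_m(u_m,u_m)<\infty$, and harmonic extension preserves constancy on each cell $K_\alpha$, $\alpha\in\widetilde\mcA_m$ (a constant is its own harmonic extension, and the trace/restriction to $K_\alpha$ is again a resistance form by Proposition~\ref{prop:DB.FRCS01}). Hence $u\in\widetilde\F_m\subseteq\widetilde\F$ and $u(x)\neq u(y)$.

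The main obstacle I anticipate is the combinatorial bookkeeping in the second paragraph: ensuring that one really can route the ``separation'' of $x$ from $y$ through a cell outside $\widetilde\mcA_m$, rather than being forced to modify $u$ on a $\widetilde\mcA_n$-cell (which would violate the constancy requirement for membership in $\widetilde\F$). This is exactly where property (h) of the finitely ramified cell structure and the nesting hypothesis in Assumption~\ref{ass:MR01} must be used: if every ``path'' between $x$ and $y$ were trapped inside the $\widetilde\mcA$-cells at all levels, the two points would be forced to be equal. Making this dichotomy precise — distinguishing the case $y\notin\mcC$-approximants from the case where the separating cut is itself a non-tilde cell — is the crux; the rest is the standard harmonic-extension argument together with locality of $\E$ (Remark~\ref{rem:FRCS01}).
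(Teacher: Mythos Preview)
Your overall strategy --- construct a function on a finite level $V_m$ that is constant on each $V_\alpha$ with $\alpha\in\widetilde{\mcA}_m$, then harmonically extend to land in $\widetilde\F_m\subseteq\widetilde\F$ --- is exactly the paper's approach, and your justification of the harmonic-extension step is correct: since $K_\alpha\cap V_m=V_\alpha$ for $\alpha\in\mcA_m$, constancy on $V_\alpha$ propagates to constancy on $K_\alpha$ under harmonic extension.

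The gap is precisely the one you flag, and your proposed resolution via property (h) does not close it. Your argument shows that $x$ and $y$ cannot lie in a \emph{single common} $\widetilde\mcA_n$-cell for every $n$. But the constancy requirement forces $u_m$ to be constant on each equivalence class of the relation on $V_m$ generated by ``$p\sim q$ if $p,q\in V_\alpha$ for some $\alpha\in\widetilde\mcA_m$.'' Even if $x$ and $y$ sit in distinct $\widetilde\mcA_m$-cells, they may still be linked by a \emph{chain} of such cells sharing boundary vertices, which would force $u_m(x)=u_m(y)$. Nothing in your use of (h) rules this out, and there is no obvious reason the chain must break at some finite level in a general finitely ramified cell structure.

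The paper sidesteps this difficulty by invoking an external topological fact: it forms the collection $\mcA_n(x,y)$ of $n$-cells that contain $x$, contain $y$, or meet $\mcC$, and cites \cite[Proposition~2.9]{Tep08} to conclude that $x$ and $y$ lie in different connected components of this collection. The separating function is then built as the indicator (suitably interpolated on the non-tilde cells) of the component $C_n(x)$. This connectivity input is the missing ingredient in your plan; once you have it, the rest of your outline goes through.
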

\begin{proof}
Let $n\geq 1$ be large enough so that $x,y\in V_n$ and define the set of $n$-cells $\mcA_n(x,y)=\{K_\alpha,\alpha\in\mcA_n~|~x\in K_\alpha,\text{ or }y\in K_\alpha,\text{ or } \mcC\cap K_\alpha\neq \emptyset\}$. By~\cite[Proposition 2.9]{Tep08}, $x$ and $y$ belong to different connected components of $\mcA_n(x,y)$. Denote by $C_n(x)$ the connected component of $x$. For each $K_\alpha$ in this component that intersects $\mcC$ and has $\alpha\notin\widetilde{\mcA}_n$, define $u_{\alpha}\in\F_{\alpha}$ to be the harmonic function in $K_{\alpha}$ with boundary values one at the intersection with $\mcC$ and zero otherwise. If $K_\alpha\in C_n(x)$ with $\alpha\notin\widetilde{\mcA}_n$ does not intersect $\mcC$, set $u_\alpha\equiv 0$. Finally, for any $K_\alpha\in C_n(x)$ with $\alpha\in\widetilde{\mcA}_n$, set $u_\alpha\in\F_\alpha$ to be constant one. Then, the function
\begin{equation*}\label{eq:MR03}
u(z)=\begin{cases}
u_{\alpha}(z)&\text{if }z\in K_{\alpha}\in C_n(x),\\
0&\text{otherwise},
\end{cases}
\end{equation*}
is $V_n$-harmonic and belongs to $\widetilde{\F}_n$. Hence $u\in\widetilde{\F}$ and it separates $x$ and $y$ as desired.
\end{proof}
For simplicity of the proofs, we make the following assumption, that may be removed in the future based on the treatment of general resistance forms in~\cite{HT15,Hin16}.
\begin{assumption}\label{ass:MR02}
The closure of $V_*$ with respect to the effective resistance metric $R_\E$ is compact.
\end{assumption}
\begin{theorem}\label{thm:MR02}
$(\widetilde{\E},\widetilde{\F})$ is a resistance form on $V_*$.
\end{theorem}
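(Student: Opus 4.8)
The plan is to verify the three defining properties of a resistance form on $V_*$ for the pair $(\widetilde{\E},\widetilde{\F})$, in the sense recalled in the Appendix: (RF1) $\widetilde{\F}$ is a linear subspace of $\ell(V_*)$ containing constants, and $\widetilde{\E}$ is a nonnegative symmetric bilinear form on it that vanishes (as a quadratic form) only on constants; (RF2) the quotient $\widetilde{\F}/(\text{constants})$ is complete with respect to the inner product induced by $\widetilde{\E}$; (RF3) for every finite subset $V\subseteq V_*$, the trace of $\widetilde{\E}$ onto $V$ is a resistance form on $V$ (equivalently, every function on $V$ extends to an element of $\widetilde{\F}$, and one has the usual Markov-type estimate); and the point-separation / finite-resistance property. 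Several of these are cheap: $\widetilde{\F}$ is clearly a linear subspace containing constants since each $\widetilde{\F}_n$ is, and the approximation conditions in Definition~\ref{def:MR01}(2) are preserved under linear combinations; symmetry, bilinearity and nonnegativity of $\widetilde{\E}=\E|_{\widetilde{\F}\times\widetilde{\F}}$ are inherited from $(\E,\F)$; and $\widetilde{\E}(u,u)=0$ forces $u$ constant because $\widetilde{\F}\subseteq\F$ and $(\E,\F)$ is a resistance form. Point separation is exactly Lemma~\ref{lemma:MR01}.

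The substantive points are completeness (RF2) and the trace property (RF3). For completeness, I would take a sequence $\{u^{(j)}\}_{j\geq 1}\subseteq\widetilde{\F}$ that is Cauchy in $\widetilde{\E}$; since $\widetilde{\E}=\E|_{\widetilde{\F}\times\widetilde{\F}}$ and $(\E,\F)$ is complete, there is $u\in\F$ with $\E(u-u^{(j)},u-u^{(j)})\to 0$, and (normalizing at one point, using Assumption~\ref{ass:MR02} so that the resistance metric is bounded and hence $\widetilde{\E}$-convergence plus a point normalization gives uniform convergence on $V_*$) also $u^{(j)}\to u$ pointwise on $V_*$. It remains to show $u\in\widetilde{\F}$, i.e.\ to produce an approximating sequence from $\cup_n\widetilde{\F}_n$. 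The natural move is a diagonal argument: for each $j$ pick $v^{(j)}\in\cup_n\widetilde{\F}_n$ with $\E(u^{(j)}-v^{(j)},u^{(j)}-v^{(j)})<2^{-j}$ and $\sup_{x\in V_*}|u^{(j)}(x)-v^{(j)}(x)|<2^{-j}$ (possible by definition of $\widetilde{\F}$ together with the uniform control coming from Assumption~\ref{ass:MR02}), and then check that $v^{(j)}\to u$ both in $\widetilde{\E}$ and pointwise, using the triangle inequality in the Hilbert seminorm and in sup-norm. This shows $u\in\widetilde{\F}$ and $\widetilde{\E}(u-u^{(j)},u-u^{(j)})\to 0$.

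For the trace property, fix a finite $V\subseteq V_*$; I want the trace of $\widetilde{\E}$ to $V$ to be a resistance form, for which by Lemma~\ref{lemma:FRCS01} it suffices to exhibit, for every function $g\in\ell(V)$, an extension lying in $\widetilde{\F}$, together with connectivity of $V$ through positive conductances. Take $n$ large enough that $V\subseteq V_n$; on $\mcA_n\setminus\widetilde{\mcA}_n$ cells one has genuine positive conductances (the restricted forms $\E_{K_\alpha}$), while on $\widetilde{\mcA}_n$ cells functions in $\widetilde{\F}_n$ are constant, so the $V_n$-harmonic extension construction used in the proof of Lemma~\ref{lemma:MR01} — set a function on $\mcC\cap K_\alpha$ to be locally constant, extend harmonically on non-dust cells — realizes any prescribed boundary data on $V$ as the restriction of a $V_n$-harmonic element of $\widetilde{\F}_n\subseteq\widetilde{\F}$; minimizing $\widetilde{\E}$ over such extensions gives the trace form and the Markov property is inherited from $\E$. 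The connectivity requirement, that any two points of $V$ are joined by a path of positive-conductance edges, follows because $(\widetilde{\E},\widetilde{\F})$ separates points (Lemma~\ref{lemma:MR01}) and the trace of a point-separating form on a finite set is nondegenerate. I expect the main obstacle to be the completeness step, specifically making the passage ``$u\in\F$ and pointwise limit of things in $\widetilde{\F}$ $\Rightarrow$ $u\in\widetilde{\F}$'' fully rigorous: one must be careful that the approximants $v^{(j)}$ can be chosen with \emph{simultaneous} small energy-error and small sup-error, which is where Assumption~\ref{ass:MR02} (compactness of the resistance closure, hence a uniform bound relating $\E$-seminorm to oscillation) does the real work, and that the diagonalization does not secretly require a single $n$ working for all $j$ (it does not — $\widetilde{\F}$ only asks for \emph{some} sequence in $\cup_n\widetilde{\F}_n$).
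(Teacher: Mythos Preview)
Your argument lines up with the paper's on (RF1), (RF3) (point separation via Lemma~\ref{lemma:MR01}), and (RF4), and your treatment of (RF2) is a correct and considerably more explicit version of what the paper compresses into the single clause ``the definition of $\widetilde{\F}$ implies (RF2).'' The diagonal argument you sketch is the right one, and your observation that Assumption~\ref{ass:MR02} upgrades pointwise-plus-energy convergence to uniform convergence (via $|w(x)-w(y)|^2\le R_\E(x,y)\,\E(w,w)$ with $R_\E$ bounded) is exactly what makes the simultaneous $2^{-j}$ bounds available.

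The one genuine gap is (RF5). You fold the Markov property into your ``trace'' discussion and then assert it is ``inherited from $\E$,'' but that only gives the energy inequality $\E(\bar u,\bar u)\le\E(u,u)$; it does \emph{not} give $\bar u\in\widetilde{\F}$. Concretely: if $u_n\in\widetilde{\F}_n$ approximates $u$, then $\bar u_n\in\widetilde{\F}_n$ and $\bar u_n\to\bar u$ pointwise with $\E(\bar u_n,\bar u_n)$ bounded, but $\E(\bar u-\bar u_n,\bar u-\bar u_n)\to 0$ is not automatic --- truncation is not a contraction for the \emph{difference} seminorm. This is precisely the step the paper isolates as nontrivial and handles by invoking \cite[Theorem 11.6]{ARFK17} under Assumption~\ref{ass:MR02} ($R_\E$-boundedness). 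So your proof is correct modulo this point, and the paper's route differs mainly in where the compactness assumption is spent: you use it for the completeness diagonalization, the paper uses it (via the cited result) to close $\widetilde{\F}$ under unit contraction. You need it in both places; your proposal covers only one.

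A minor formatting point: what you call ``(RF3)'' (traces to finite sets are resistance forms) is not the (RF3) of the paper's Definition~\ref{def:DB.RF01}, which is simply point separation. Your extension construction is correct and useful, but it neither coincides with nor replaces the paper's (RF5).
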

\begin{proof}
We prove first that $(\widetilde{\E},\widetilde{\F})$ is a resistance form. The properties (RF1) and (RF4) follow immediately from the fact that $\widetilde{\F}\subseteq\F$. The definition of $\widetilde{\F}$ implies (RF2) and Lemma~\ref{lemma:MR01} yields (RF3). 
Finally, since $V_*$ is in particular $R_\E$-bounded, we can adapt~\cite[Theorem 11.6]{ARFK17} to get (RF5).
\end{proof}

\begin{definition}\label{def:MT02}
For each $n\geq 0$ define the mapping $\widetilde{R}_{n}\colon V_*\times V_*\to [0,\infty)$ by
\begin{equation*}\label{eq:MT07}
\widetilde{R}_{n}(x,y)=\sup\bigg\{\frac{|u(x)-u(y)|^2}{\E(u,u)}~|~u\in \widetilde{\F}_{n},~\E(u,u)\neq 0\bigg\}
\end{equation*}
and
\begin{equation*}\label{eq:MT08}
\widetilde{R}(x,y):=\lim_{n\to\infty}\widetilde{R}_{n}(x,y)
\end{equation*}
for any $x,y\in V_*$.
\end{definition}
\begin{lemma}\label{lemma:MR02}
The mapping $\widetilde{R}$ is the resistance metric associated with $(\widetilde{\E},\widetilde{\F})$.
\end{lemma}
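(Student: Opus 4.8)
The statement to establish is that $\widetilde{R}$ coincides with $R_{\widetilde{\E}}$, the resistance metric of the resistance form $(\widetilde{\E},\widetilde{\F})$ produced by Theorem~\ref{thm:MR02}; recall that $R_{\widetilde{\E}}(x,y)=\sup\{|u(x)-u(y)|^2/\widetilde{\E}(u,u):u\in\widetilde{\F},\ \widetilde{\E}(u,u)\neq 0\}$. The plan is to prove the two pointwise inequalities $\widetilde{R}\le R_{\widetilde{\E}}$ and $R_{\widetilde{\E}}\le\widetilde{R}$ on $V_*\times V_*$.

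First I would record the monotonicity $\widetilde{\F}_n\subseteq\widetilde{\F}_{n+1}$. By Assumption~\ref{ass:MR01} each $K_\alpha$ with $\alpha\in\widetilde{\mcA}_{n+1}$ lies inside some $K_{\alpha'}$ with $\alpha'\in\widetilde{\mcA}_n$, so any $u$ that is constant on every $K_{\alpha'}$, $\alpha'\in\widetilde{\mcA}_n$, is automatically constant on every $K_\alpha$, $\alpha\in\widetilde{\mcA}_{n+1}$. Hence the family of test functions defining $\widetilde{R}_n(x,y)$ grows with $n$, so $n\mapsto\widetilde{R}_n(x,y)$ is non-decreasing and $\widetilde{R}(x,y)=\lim_n\widetilde{R}_n(x,y)=\sup_n\widetilde{R}_n(x,y)$. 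Since $\widetilde{\F}_n\subseteq\widetilde{\F}\subseteq\F$ and $\widetilde{\E}=\E|_{\widetilde{\F}\times\widetilde{\F}}$, for $u\in\widetilde{\F}_n$ the denominator $\E(u,u)$ appearing in Definition~\ref{def:MT02} equals $\widetilde{\E}(u,u)$. Consequently the supremum defining $\widetilde{R}_n(x,y)$ ranges over a subfamily of the functions admitted in $R_{\widetilde{\E}}(x,y)$, whence $\widetilde{R}_n(x,y)\le R_{\widetilde{\E}}(x,y)$ and, in the limit, $\widetilde{R}(x,y)\le R_{\widetilde{\E}}(x,y)$; in particular $\widetilde{R}$ is finite-valued.

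For the converse, fix $x,y\in V_*$ and $u\in\widetilde{\F}$ with $\widetilde{\E}(u,u)\neq 0$; it suffices to show $|u(x)-u(y)|^2\le\widetilde{R}(x,y)\,\widetilde{\E}(u,u)$ and then take the supremum over such $u$. By the definition of $\widetilde{\F}$ (Definition~\ref{def:MR01}) choose $\{u_m\}_{m\ge1}\subseteq\bigcup_{n\ge1}\widetilde{\F}_n$ with $\E(u-u_m,u-u_m)\to 0$ and $u_m(z)\to u(z)$ pointwise. Since $\E(\cdot,\cdot)^{1/2}$ is a seminorm on $\F$, $\E(u_m,u_m)\to\E(u,u)=\widetilde{\E}(u,u)>0$, so $\E(u_m,u_m)>0$ for all large $m$. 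Writing $u_m\in\widetilde{\F}_{k_m}$ and using the definition of $\widetilde{R}_{k_m}$ together with $\widetilde{R}_{k_m}(x,y)\le\widetilde{R}(x,y)$ from the previous paragraph, we get $|u_m(x)-u_m(y)|^2\le\widetilde{R}(x,y)\,\E(u_m,u_m)$ for all large $m$; letting $m\to\infty$, the left side tends to $|u(x)-u(y)|^2$ and the right side to $\widetilde{R}(x,y)\,\widetilde{\E}(u,u)$, which is the desired bound. Taking the supremum over $u$ gives $R_{\widetilde{\E}}(x,y)\le\widetilde{R}(x,y)$, and combining the two inequalities yields $\widetilde{R}=R_{\widetilde{\E}}$; as $R_{\widetilde{\E}}$ is a metric on $V_*$ by the general theory of resistance forms, so is $\widetilde{R}$.

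The substantive direction is $R_{\widetilde{\E}}\le\widetilde{R}$, where an arbitrary $\widetilde{\E}$-admissible test function has to be traded, through the sequence witnessing membership in $\widetilde{\F}$, for limits of $\widetilde{\F}_{k_m}$-admissible functions; once $\widetilde{\F}_n\subseteq\widetilde{\F}_{n+1}$ is available this is routine. The only point demanding slight care is the possible vanishing of $\E(u_m,u_m)$ for small $m$, which is handled above by restricting to large $m$ where it is bounded away from $0$; beyond that the argument is bookkeeping with monotone limits and the seminorm inequality, so I do not anticipate a real obstacle.
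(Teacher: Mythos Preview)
Your proof is correct. The two-inequality argument is clean: $\widetilde{R}\le R_{\widetilde{\E}}$ is immediate from $\widetilde{\F}_n\subseteq\widetilde{\F}$, and for the converse you exploit directly the very definition of $\widetilde{\F}$ as the set of pointwise-and-energy limits of elements of $\bigcup_n\widetilde{\F}_n$, passing the inequality $|u_m(x)-u_m(y)|^2\le\widetilde{R}(x,y)\,\E(u_m,u_m)$ to the limit. All steps are justified, including the monotonicity $\widetilde{\F}_n\subseteq\widetilde{\F}_{n+1}$ from Assumption~\ref{ass:MR01} and the handling of possibly vanishing $\E(u_m,u_m)$ for small $m$.

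The paper takes a different, much terser route: rather than approximating a general $u\in\widetilde{\F}$, it argues that for $x,y\in V_n$ the optimizer realizing $R_{\widetilde{\E}}(x,y)$ can be taken to be the $V_n$-harmonic function constructed in the proof of Lemma~\ref{lemma:MR01}, which already lies in $\widetilde{\F}_n$; since the supremum is then attained within $\widetilde{\F}_n$, one gets $R_{\widetilde{\E}}(x,y)\le\widetilde{R}_n(x,y)\le\widetilde{R}(x,y)$ directly. That argument relies on knowing that the resistance supremum is achieved by a harmonic function of the specific type built there, a fact the paper leaves implicit. Your density/approximation approach avoids identifying the optimizer altogether and is in that sense more self-contained, at the cost of one extra limiting step.
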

\begin{proof}
From the proof of Lemma~\ref{lemma:MR01}, for any $x,y\in V_*$ we can choose $n\geq 1$ large enough so that there is a $V_n$-harmonic function $h\in\widetilde{\F}$ with $h(x)=1$ and $h(y)=1$.
\end{proof}

\begin{notation}
For any $\alpha\in\mcA$, denote by $(\widetilde{\E}_\alpha,\widetilde{\F}_\alpha)$ the restriction of $(\widetilde{\E},\widetilde{\F})$ to $K_\alpha$, and let $R_\alpha$ denote its associated resistance metric.
\end{notation}
\begin{theorem}\label{thm:MT01}
The following statements are equivalent:
\begin{enumerate}[leftmargin=.3in]
\item $(\E,\F)=(\widetilde{\E},\widetilde{\F})$,
\item $(\E|_{V_n},\ell(V_n))=(\widetilde{\E}|_{V_n},\ell(V_n))$ for any $n\geq 0$,
\item $(\E_{\alpha},\F_{\alpha})=(\widetilde{\E}_{\alpha},\widetilde{\F}_{\alpha})$ for all $\alpha\in\mcA$,
\item $R_\alpha(x,y)=\widetilde{R}_\alpha(x,y)$ for any $x,y\in V_\alpha$,
\item $R_\E(x,y)=\widetilde{R}(x,y)$ for any $x,y\in V_*$.
\end{enumerate}
\end{theorem}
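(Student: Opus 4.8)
The plan is to prove the cycle of equivalences $(1)\Rightarrow(2)\Rightarrow(3)\Rightarrow(4)\Rightarrow(5)\Rightarrow(1)$, using the machinery of traces and restrictions of resistance forms together with the structural results already established (Proposition~\ref{prop:DB.FRCS02}, Proposition~\ref{prop:MT01}, Lemma~\ref{lemma:MR02}). The guiding principle is that a resistance form on $V_*$ is determined by its finite-level restrictions, which in turn are determined by the effective resistance between pairs of points; and since $\widetilde{\F}\subseteq\F$ always holds with $\widetilde{\E}=\E|_{\widetilde{\F}\times\widetilde{\F}}$, the two forms coincide precisely when $\widetilde{\F}$ is not a proper subspace, which can be detected on any of these finite-dimensional layers.

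First I would show $(1)\Rightarrow(2)$: if the forms coincide on $V_*$, then taking traces to the finite set $V_n$ (Definition~\ref{def:DB.RF04}) gives $(\E|_{V_n},\ell(V_n))=(\widetilde{\E}|_{V_n},\ell(V_n))$ for every $n$, since the trace is a well-defined operation on resistance forms and $\widetilde{R}=R_\E$ by Lemma~\ref{lemma:MR02}. For $(2)\Rightarrow(3)$, fix $\alpha\in\mcA$ and recall from Proposition~\ref{prop:DB.FRCS01} that $(\E_\alpha,\F_\alpha)$ is the limit of the compatible sequence $(\E_{\alpha,n},\ell(V_{\alpha,n}))$ obtained by restricting $\E|_{V_n}$ to $V_{\alpha,n}=K_\alpha\cap V_n$; the analogous statement for $\widetilde{\E}$ needs to be checked (it follows by the same argument, using Assumption~\ref{ass:MR02} and Theorem~\ref{thm:MR02} so that $\widetilde{\E}$ restricts to a resistance form on each $K_\alpha$). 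Since restricting to $V_{\alpha,n}$ commutes with the hypothesis $\E|_{V_n}=\widetilde{\E}|_{V_n}$, the two compatible sequences agree level by level, hence so do their limits. The implication $(3)\Rightarrow(4)$ is immediate, because $R_\alpha$ and $\widetilde{R}_\alpha$ are the resistance metrics attached to $(\E_\alpha,\F_\alpha)$ and $(\widetilde{\E}_\alpha,\widetilde{\F}_\alpha)$ respectively, and equal forms have equal resistance metrics.

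For $(4)\Rightarrow(5)$: given $x,y\in V_*$, choose $n$ with $x,y\in V_n$ and pick $\alpha=\text{\o}$, or more carefully argue that $R_\E(x,y)$ is recovered from the restrictions to the top cell $K_{\text{\o}}=V_*$; since $R_\alpha=\widetilde{R}_\alpha$ for all $\alpha$, in particular for $\alpha=\text{\o}$ we get $R_\E(x,y)=R_{\E_{\text{\o}}}(x,y)=\widetilde{R}_{\text{\o}}(x,y)=\widetilde{R}(x,y)$, using Remark~\ref{rem:DB.FRCS01} and Lemma~\ref{lemma:MR02} to identify these metrics. Finally $(5)\Rightarrow(1)$: by the general theory (\cite[Theorem 3.3]{Kig12} or the uniqueness part of Kigami's correspondence between resistance forms and resistance metrics), a resistance form on $V_*$ whose closure is $R_\E$-compact is uniquely determined by its resistance metric; since $R_\E=\widetilde{R}$ and $\widetilde{\F}\subseteq\F$ with $\widetilde{\E}$ the restriction of $\E$, the coincidence of metrics forces $\widetilde{\F}=\F$ and hence $\widetilde{\E}=\E$. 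Here Assumption~\ref{ass:MR02} and Lemma~\ref{lemma:MR01} (guaranteeing that $\widetilde{\F}$ separates points, so $\widetilde{R}$ is genuinely a metric) are what make the uniqueness argument applicable.

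I expect the main obstacle to be the implication $(5)\Rightarrow(1)$, specifically making rigorous that equality of resistance metrics on $V_*$ upgrades to equality of the forms. The subtlety is that $\widetilde{R}\leq R_\E$ always (as noted after Definition~\ref{def:SG3.03} in the concrete case, and in general because $\widetilde{\F}\subseteq\F$), so one knows $\widetilde{R}=R_\E$ already gives that the identity map is an isometry; the work is to deduce $\widetilde{\F}=\F$ from this, which requires invoking the bijective correspondence between resistance forms and resistance metrics on a set whose completion is compact — this is where Assumption~\ref{ass:MR02} is essential. A secondary technical point is verifying that $(\widetilde{\E},\widetilde{\F})$ genuinely restricts to a resistance form on each $K_\alpha$ with a well-defined resistance metric $\widetilde{R}_\alpha$, paralleling Proposition~\ref{prop:DB.FRCS01}; once that is in place, the remaining implications are essentially bookkeeping with traces and restrictions.
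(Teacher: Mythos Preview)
Your overall strategy is the same as the paper's: both rely on Kigami's correspondence between resistance forms and resistance metrics (\cite[Lemma 2.1.12]{Kig01}) together with the fact that a resistance form on $V_*$ is determined by its traces on the $V_n$ (\cite[Theorem 3.13]{Kig12}), and the cell decomposition $\E|_{V_n}=\sum_{\alpha\in\mcA_n}\E_\alpha$ from Proposition~\ref{prop:DB.FRCS02}. The paper organizes this as pairwise equivalences $(1)\Leftrightarrow(2)$, $(2)\Leftrightarrow(3)$, $(3)\Leftrightarrow(4)$, $(1)\Leftrightarrow(5)$, while you run a cycle.

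There is one genuine gap in your cycle, at the step $(4)\Rightarrow(5)$. Statement (4) asserts $R_\alpha=\widetilde{R}_\alpha$ only for $x,y\in V_\alpha$, and for $\alpha=\text{\o}$ one has $V_{\text{\o}}=V_0$, the finite boundary set, not all of $K_{\text{\o}}=V_*$. So the shortcut ``take $\alpha=\text{\o}$'' yields equality of the two resistance metrics only on $V_0$, not on $V_*$. The clean fix is to route through (2): from (4), Kigami's lemma on the finite set $V_\alpha$ gives that the trace forms on $V_\alpha$ agree; summing over $\alpha\in\mcA_n$ and invoking the decomposition $\E|_{V_n}=\sum_{\alpha\in\mcA_n}\E_\alpha$ yields (2), hence (1), hence (5). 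This is exactly the path the paper implicitly takes, and once you make that reroute your argument is complete.
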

\begin{proof}
By~\cite[Theorem 3.13]{Kig12}, (1) and (2) are equivalent. Moreover, by Proposition~\ref{prop:DB.FRCS02} and since $V_n=\cup_{\alpha\in\mcA_n}V_\alpha$ for any $n\geq 1$, we have
\begin{equation}
\E|_{V_n}(u_{|_{V_n}},u_{|_{V_n}})=
\sum_{\alpha\in\mcA_n}\E_{\alpha}(u_{|_{V_{\alpha}}},u_{|_{V_{\alpha}}})
\end{equation}
and therefore (2) and (3) are equivalent. The equivalence of (5) and (1) as well as (3) and (4) follows as a consequence of~\cite[Lemma 2.1.12]{Kig01}.
\end{proof}

\section{p.c.f. hybrid fractals}\label{section:HF}
In this section is devoted to presenting a general notion of hybrid fractals. We explain how to equip them with a natural finitely ramified cell structure and work out an example where Theorem~\ref{thm:MT01} can be applied to obtain a characterized energy that is not self-similar. Although hybrid fractals can be viewed as graph-directed fractals, the type of resistance forms that we consider here is not captured by the setting in~\cite{HN03} and we will thus focus on their finitely ramified cell structure.

\medskip

A \textit{post critically finite hybrid} (p.c.f. hybrid) is based on a p.c.f. self-similar set $K_*$ with self-similar structure $(K_*,\mcS,\{\phi_{c,i}\}_{i\in\mcS})$ and boundary $V_0$, where the contraction mappings have ratio $c\in(0,1)$. We refer to~\cite[Chapter 1]{Kig01} for further details about p.c.f. sets.

\begin{notation}
We denote by $B$ the index set of the ``bonds'' or ``blocks'' that determine the hybrid.
\end{notation}

\begin{definition}\label{def:HF01}
Let $0<\tilde{c}<c<1$. For each $i\in\mcS$, define $\phi_i:=\phi_{\tilde{c},i}$.
\begin{itemize}[leftmargin=.25in]
\item[(i)] Define $\mcC_*\subseteq\mbbR^2$ to be self-similar Cantor set generated by $\{\phi_i\}_{i\in\mcS}$.
\item[(ii)] Let $\{K_j\}_{j\in B}$ be a family of p.c.f. self-similar sets such that $\partial K_j=K_j\cap (\cup_{i\in\mcS'}\phi_i(V_0))$ for some $\mcS'\subseteq \mcS$. The unique non-empty compact subset of $\mbbR^2$ such that
\begin{equation*}\label{eq:HF.01}
{\rm H}_{K_*}=\bigcup_{i\in\mcS}\phi_i({\rm H}_{K_*})\cup\bigcup_{j\in B}K_j
\end{equation*}
is called the hybrid fractal of base $K_*$ and bonds $\{K_j\}_{j\in B}$. 
\item[(iii)] The set $\mcC_*$ is called the fractal dust associated with ${\rm H}_{K_*}$.
\end{itemize}
\end{definition}
Although $\phi_i$ depends on $\tilde{c}$, a different contraction ratio will change the geometry but not the topology of ${\rm H}_{K_*}$. In order to set up a finitely ramified cell structure we introduce first some standard notation.

\begin{definition}\label{def:HF02}
For each $n\geq 0$ define
\begin{equation*}\label{eq:HF02}
W^{\ss{\mcS}}_n=\{\text{words of length }n\text{ in the alphabet }\mcS\},
\end{equation*}
and set $W^{\ss{\mcS}}_*=\cup_{n\geq 0}W^{\ss{\mcS}}_n$. For any $w\in W^{\ss{\mcS}}_*$ the mapping $\phi_w\colon\mbbR^2\to\mbbR^2$ is defined as
\begin{equation*}\label{eq:HF03}
\phi_w=\phi_{w_1}\circ \phi_{w_2}\circ\cdots\circ \phi_{w_n},
\end{equation*}
and $\phi_{\text{\o}}$ is the identity map on $\mbbR^2$. 
\end{definition}

In addition, each bond has an associated self-similar structure $(K_j,\mcS_j,\{\phi_{(j,k)}\}_{k\in\mcS_j})$ with boundary $V_{j,0}$. The sets $W_n^{\ss{\mcS_j}}$, $W_*^{\ss{\mcS_j}}$ and $\phi_{(j,w)}$ are defined entirely analogous using the corresponding alphabet $\mcS_j$.

\begin{definition}\label{def:HF03}
For each $0\leq k\leq n-1$ and $j\in B$, define 
\begin{equation*}\label{eq:HF04}
\mcA^{\sss{j}}_{n,k}:=W^{\ss{\mcS}}_k\times W^{\ss{\mcS_j}}_{n-k}\qquad\text{and}\qquad \mcA^{\sss{j}}_n=\bigcup_{k=1}^{n-1}\mcA^{\sss{j}}_{n-1,k-1}.
\end{equation*}
Further, set 
\begin{equation*}\label{eq:HF05}
\mcA_n:=W^{\ss{\mcS}}_n\cup\bigcup_{j\in B}\mcA_n^{\sss{j}}
\end{equation*} 
and for each $\alpha\in \mcA_n$ define
\begin{equation*}\label{eq:HF06}
K_\alpha=\begin{cases}
\phi_w(V_*)&\text{if }\alpha=w\in W^{\ss{\mcS}}_n,\\
\phi_w\phi_{(j,v)}(V_{j,*})&\text{if }\alpha=(w,v)\in\mcA_n^{\sss{j}},
\end{cases}
\end{equation*}
where $V_*$ and $V_{j,*}$ are countable dense subsets of ${\rm H}_{K_*}$ and $K_j$ respectively. The sets $V_\alpha$ are defined analogously substituting ${\rm H}_{K_*}$ and $K_j$ by their respective boundaries.
\end{definition}
Equipped with suitable weight functions, the weighted graphs $(V_\alpha,E_\alpha,r_\alpha)$ yield a harmonic finitely ramified cell structure. Notice that Assumption~\ref{ass:MR01} is satisfied with 
\begin{equation*}\label{eq:HF07}
\widetilde{\mcA}_n=W^{\ss{\mcS}}_n
\end{equation*}
for each $n\geq 1$. The closure with respect to the Euclidean metric of the set $\mcC$ from~\eqref{eq:MR01.A1} is the fractal dust $\mcC_*$.

\begin{example}[Hanoi attractor]\label{Ex:HF1}
This space, also called Stretched Sierpinski gasket~\cite{ARFK17}, falls into the class of hybrids treated in Section~\ref{section:HD3S}. Its base is the Sierpinski gasket and the set of bonds consists of the three line segments that join each triangle in Figure~\ref{fig:HF01}. 

\begin{figure}[H]
\centering
\input{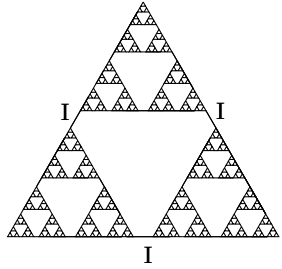}
%
\caption{\small{The bonds $\I\in\{K_j\}_{j\in B}$.}}
\label{fig:HF01}
\end{figure}

In contrast to Section~\ref{section:HD3S}, we choose now a family of weighted graphs that does not lead to a graph-directed self-similar energy. Nevertheless, the resulting resistance form 
enjoys a strong symmetry property and the associated resistance metric induces the same topology as the Euclidean metric. We refer to~\cite{ARFK17} for a thorough study of the resistance forms in this space and setting. 
\begin{figure}[H]
\centering
\begin{tabular}{ccc}
\begin{tikzpicture}[scale=.6]
\draw[thick] ($(240:4)$) circle (2pt) --  ($(240+60:4)$) circle (2pt) -- ($(240:4)+(60:4)$) circle (2pt) -- ($(240:4)$);
\end{tikzpicture}
\hspace*{.5in}
&
\begin{tikzpicture}[scale=.65]
\draw[thick] ($(0:2)$) circle (2pt) --  ($(180:2)$) circle (2pt);
\draw[white] ($(270:1.5)$) to++ ($(0:1)$);
\end{tikzpicture}
\\
\hspace*{.5in}& \hspace*{.5in}&\\
$\downarrow$\hspace*{.5in} &\hspace*{.5in}$\downarrow$ \hspace*{.5in} \\
& &\\
\begin{tikzpicture}[scale=.65]
\draw[thick] ($(240:4)$) circle (2pt) to++ ($(0:1.5)$) circle (2pt) to++ ($(0:1)$) circle (2pt) -- ($(240+60:4)$) circle (2pt) to++ ($(120:1.5)$) circle (2pt) to++ ($(120:1)$) circle (2pt) -- ($(240:4)+(60:4)$) circle (2pt) to++ ($(240:1.5)$) circle (2pt) to++ ($(240:1)$) circle (2pt) -- ($(240:4)$);
\draw ($(240:.75)$) node[left] {\small $r_n$};
\draw ($(300:.75)$) node[right] {\small $r_n$};
\draw ($(240:2)$) node[left] {\small $\rho_n$};
\draw ($(300:2)$) node[right] {\small $\rho_n$};
\draw ($(240:3.25)$) node[left] {\small $r_n$};
\draw ($(300:3.25)$) node[right] {\small $r_n$};
\draw ($(252:4.1)$) node[] {\small $r_n$};
\draw ($(288:4.1)$) node[] {\small $r_n$};
\draw ($(270:3.9)$) node[] {\small $\rho_n$};
\draw ($(240:1.5)$) -- ($(300:1.5)$) node[below, midway] {\small $r_n$};
\draw ($(240:2.5)$) -- ($(240:4)+(0:1.5)$)node[right, midway] {\small $r_n$} ($(300:2.5)$) -- ($(300:4)+(180:1.5)$)node[left, midway] {\small $r_n$};
\end{tikzpicture}
\hspace*{.5in}
&
\begin{tikzpicture}[scale=.65]
\draw[thick] ($(180:2)$) circle (2pt) to ++ ($(0:2)$) circle (2pt);
\draw[thick] ($(180:2)+(0:2)$) to ++ ($(0:2)$) circle (2pt);
\draw[white] ($(270:1.5)$) to++ ($(0:1)$);
\draw ($(180:1)$) node[above] {\small $\frac{1}{2}$};
\draw ($(0:1)$) node[above] {\small $\frac{1}{2}$};
\end{tikzpicture}
\end{tabular}
\caption{\small Graph subdivision and corresponding resistance scaling factors at level $n$.}
\label{fig:HF02}
\end{figure}
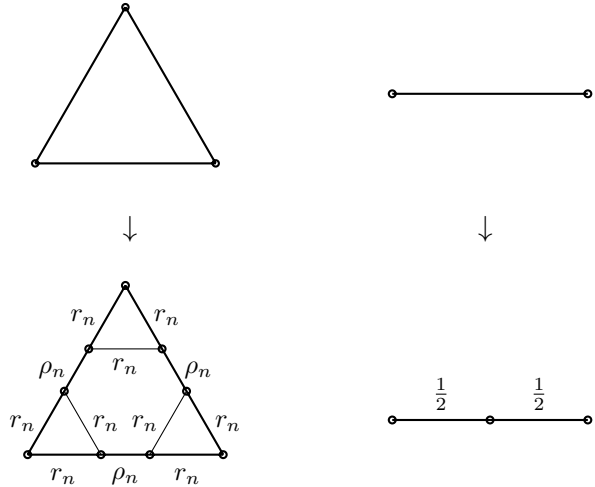
The weight functions $r_\alpha\colon V_\alpha\times V_\alpha\to [0,\infty)$ that arise from the construction in~\cite[Section 5]{ARFK17} are given by
\begin{equation*}\label{eq:HF08}
r_\alpha(x,y)=\begin{cases}
r_1\cdots r_n&\text{if }\alpha\in \widetilde{\mcA}_n,\\
\big(\frac{1}{2}\big)^{n-k}r_1\cdots r_{k-1}\rho_k&\text{if }\alpha\in \mcA^{\sss{j}}_{n-1,k-1}, j\in B,
\end{cases}
\end{equation*}
where the pairs $(r_k,\rho_k)$ satisfy
\begin{equation}\label{eq:HF09}
\frac{5}{3}r_k+\rho_k=1
\end{equation}
for each $k\geq 1$.
The resistance form $(\E,\F)$ on $V_*$ induced by this finitely ramified cell structure is
\begin{equation}\label{eq:HF10}
\E(u,u)=\lim_{n\to\infty}\sum_{\alpha\in\mcA_n}\E_\alpha(u_{|_{V_\alpha}},u_{|_{V_\alpha}})
\end{equation}
for any $u\in\F=\{u\colon V_*\to\mbbR~|~\E_\mcR(u,u)<\infty\}$. Due to~\cite[Theorem 5.16]{ARFK17}, $(\E,\F)$ naturally extends to the whole $\rm H$. In virtue of Theorem~\ref{thm:MR02}, this resistance form admits the expression 
\begin{equation*}
\E_\mcR(u,u)
=\sum_{k=0}^\infty\sum_{(\alpha,j)\in\widetilde{\mcA}_k\times B}\frac{1}{r_1\dots r_{r-1}\rho_k}\int_{\I_{j}}|(u\circ\phi_\alpha)'(x)|^2dx
\end{equation*}
if and only if the resistance metric $R_\E$ and the metric $\widetilde{R}$ from Definition~\ref{def:MT02} coincide. Let us see when this happens.

\medskip

On the one hand, by applying the $\Delta$-Y transform recursively, we obtain that the resistance distance between two boundary points $x,y\in V_0$ is given by
\begin{equation*}\label{eq:HF11}
R_\E(x,y)=\frac{2}{3}\lim_{n\to\infty}\Big(\frac{5}{3}\Big)^n\prod_{i=1}^n{r_i}+\frac{2}{3}\sum_{k=1}^\infty\rho_k\Big(\frac{5}{3}\Big)^{k-1}\Big(\prod_{i=1}^{k-1}{r_i}\Big).
\end{equation*}
On the other hand, the distance between these two points with respect to the metric $\widetilde{R}$ is
\begin{equation*}\label{eq:HF12}
\sum_{k=1}^\infty\rho_k\Big(\frac{5}{3}\Big)^{k-1}\Big(\prod_{i=1}^{k-1}{r_i}\Big).
\end{equation*}
These two quantities coincide if and only if $\lim\limits_{n\to\infty}\big(\frac{5}{3}\big)^n\prod\limits_{i=1}^nr_i=0$, which by~\cite[Lemma 7.1]{ARFK17} is equivalent to 
\begin{equation}\label{eq:HF13}
\sum_{k=1}^\infty\rho_k=\infty.
\end{equation}
Thus, the resistance form $(\E,\F)$ admits the expression~\eqref{eq:HF09} if and only if~\eqref{eq:HF13} holds. In this way we have recovered the necessary and sufficient condition that appeared in~\cite[Theorem 9.1]{ARFK17}.
\end{example}
The next sections are devoted to the study of spectral properties of hybrid fractals, starting with the latter example and afterwards moving to the hybrid $\SG_3$ from Section~\ref{section:SG3}.

\section{Spectrum of the Hanoi attractor}\label{section:SpectrumHanoi}
This section is devoted to the investigation of the spectrum of the Laplacian for the Hanoi attractor discussed in Example~\ref{Ex:HF1}. We denote this hybrid fractal by $\rm H$ and consider the resistance form $(\mathcal E,\mathcal F)$ obtained in~\ref{eq:HF10}. 

\medskip

We will restrict ourselves here to the case treated in~\cite{ARKT16}, where the resistances are given by $r_k=r$, $\rho_k=\rho$ for any $k \in \mathbb{N}$ and some $r,\rho>0$ satisfying~\eqref{eq:HF09}. Thus, $(\mathcal E,\mathcal F)$ is a graph-directed self-similar energy. As in~\cite{ARKT16}, $\rm H$ is equipped with a weakly self-similar measure $\mu$ that depends on a parameter $a\in (0,1/3)$. We refer to~\cite[Section 6]{ARKT16} for more details about this measure.
The Laplace operator $\Delta_\mu$ is defined through the weak formulation
\begin{equation}\label{E:DefDeltaMu}
\mathcal E(u,v)=-\int_H\Delta_{\mu}u v\mathrm{d\mu}
\end{equation}
for any $u\in \dom\Delta_{\mu}\subset\F$ and all $v\in \mathcal F$.

\subsection{Discrete graph approach}
Following Section~\ref{section:SG3}, let $\Gamma_m=(V_m,E_m)$ denote the $m$th level graph approximation of $\rm H$. For computational purposes, the graph subdivision starts after level one, so that $V_1$ consists only of the 9 vertices displayed in Figure~\ref{F:HanoiApprox}. Recall that $V_* = \cup_{m\geq 0} V_m$ is a dense set in $\rm H$.

\begin{figure}[H]
\centering
\begin{tabular}{ccc}
\begin{tikzpicture}[scale=0.375]
\coordinate (p_1) at (0,0);
\fill (p_1) circle (3.5pt);
\coordinate (p_2) at (3,5.1961);
\fill (p_2) circle (3.5pt);
\coordinate (p_3) at (6,0);
\fill (p_3) circle (3.5pt);
\draw (p_1) -- (p_2) -- (p_3)  -- (p_1);
\end{tikzpicture}
\hspace*{2em}
&
\begin{tikzpicture}[scale=0.375]
\coordinate (p_1) at (0,0);
\fill (p_1) circle (3.5pt);
\coordinate (p_2) at (3,5.1961);
\fill (p_2) circle (3.5pt);
\coordinate (p_3) at (6,0);
\fill (p_3) circle (3.5pt);
\coordinate (p_12) at (1,1.732);
\fill (p_12) circle (3.5pt);
\coordinate (p_21) at (2,3.4641);
\fill (p_21) circle (3.5pt);
\coordinate (p_13) at (2,0);
\fill (p_13) circle (3.5pt);
\coordinate (p_31) at (4,0);
\fill (p_31) circle (3.5pt);
\coordinate (p_23) at (4,3.4641);
\fill (p_23) circle (3.5pt);
\coordinate (p_32) at (5,1.732);
\fill (p_32) circle (3.5pt);

\draw (p_12) -- (p_21)  (p_13) -- (p_31)   (p_23) -- (p_32);
\draw (p_1) -- (p_12) -- (p_13) -- (p_1) (p_21) -- (p_2) -- (p_23) -- (p_21) (p_31) -- (p_32) -- (p_3) -- (p_31); 
\end{tikzpicture}
\hspace*{2em}
&
\begin{tikzpicture}[scale= 0.375]
\coordinate (p_1) at (0,0);
\fill (p_1) circle (3.5pt);
\coordinate (p_2) at (3,5.1961);
\fill (p_2) circle (3.5pt);
\coordinate (p_3) at (6,0);
\fill (p_3) circle (3.5pt);
\coordinate (p_12) at (1,1.732);
\fill (p_12) circle (3.5pt);
\coordinate (12_21) at (1.5,2.65);
\fill (12_21) circle (3.5pt);
\coordinate (p_21) at (2,3.4641);
\fill (p_21) circle (3.5pt);
\coordinate (p_13) at (2,0);
\fill (p_13) circle (3.5pt);
\coordinate (p_31) at (4,0);
\fill (p_31) circle (3.5pt);
\coordinate (13_31) at (3,0);
\fill (13_31) circle (3.5pt);
\coordinate (p_23) at (4,3.4641);
\fill (p_23) circle (3.5pt);
\coordinate (p_32) at (5,1.732);
\fill (p_32) circle (3.5pt);
\coordinate (23_32) at (4.5,2.65);
\fill (23_32) circle (3.5pt);
\coordinate (p_112) at (1/3,1.732/3);
\fill (p_112) circle (3.5pt);
\coordinate (p_121) at (2/3,3.4641/3);
\fill (p_121) circle (3.5pt);
\coordinate (p_113) at (2/3,0);
\fill (p_113) circle (3.5pt);
\coordinate (p_131) at (4/3,0);
\fill (p_131) circle (3.5pt);
\coordinate (p_123) at (4/3,3.4641/3);
\fill (p_123) circle (3.5pt);
\coordinate (p_132) at (5/3,1.732/3);
\fill (p_132) circle (3.5pt);
\coordinate (p_212) at (1/3+2,1.732/3+3.4641);
\fill (p_212) circle (3.5pt);
\coordinate (p_221) at (2/3+2,3.4641/3+3.4641);
\fill (p_221) circle (3.5pt);
\coordinate (p_213) at (2/3+2,3.4641);
\fill (p_213) circle (3.5pt);
\coordinate (p_231) at (4/3+2,3.4641);
\fill (p_231) circle (3.5pt);
\coordinate (p_223) at (4/3+2,3.4641/3+3.4641);
\fill (p_223) circle (3.5pt);
\coordinate (p_232) at (5/3+2,1.732/3+3.4641);
\fill (p_232) circle (3.5pt);
\coordinate (p_312) at (1/3+4,1.732/3);
\fill (p_312) circle (3.5pt);
\coordinate (p_321) at (2/3+4,3.4641/3);
\fill (p_321) circle (3.5pt);
\coordinate (p_313) at (2/3+4,0);
\fill (p_313) circle (3.5pt);
\coordinate (p_331) at (4/3+4,0);
\fill (p_331) circle (3.5pt);
\coordinate (p_323) at (4/3+4,3.4641/3);
\fill (p_323) circle (3.5pt);
\coordinate (p_332) at (5/3+4,1.732/3);
\fill (p_332) circle (3.5pt);

\draw (p_112) -- (p_113) (p_121) -- (p_123) (p_131) -- (p_132)
(p_212) -- (p_213) (p_221) -- (p_223) (p_231) -- (p_232) 
(p_312) -- (p_313) (p_321) -- (p_323) (p_331) -- (p_332)
(p_12) -- (p_21)  (p_13) -- (p_31)   (p_23) -- (p_32)
(p_1) -- (p_12) -- (p_13) -- (p_1) (p_21) -- (p_2) -- (p_23) -- (p_21) (p_31) -- (p_32) -- (p_3) -- (p_31); 
\end{tikzpicture}
\end{tabular}
\caption{Approximating graphs $\Gamma_0$, $\Gamma_1$ and $\Gamma_2$.}
\label{F:HanoiApprox}
\end{figure}
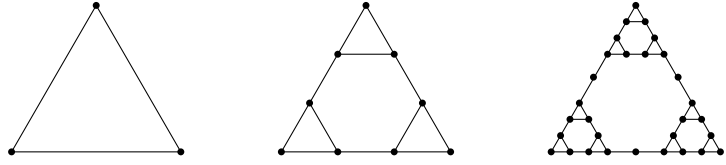

\begin{definition}
For each $m\geq 1$ and $x\in V_m$, the piecewise harmonic function $\psi_{x}^{(m)}\in C(H)$ is defined to be the unique continuous function satisfying the following conditions:
\begin{itemize}[leftmargin=.25in]
\item[(i)] For any $y\in V_m$ and $y\neq x$, $\psi_{x}^{(m)}(y)=0$, while $\psi_{x}^{(m)}(x)=1$.
\item[(ii)] For $n\geq m$, we have that
\begin{equation*}
\sum_{\alpha\in\mcA_n}\E_\alpha(\psi_{x}^{(m)}{}_{|_{V_\alpha}},u_{|_{V_\alpha}})=\sum_{\alpha\in\mcA_m}\E_\alpha(\psi_{x}^{(m)}{}_{|_{V_\alpha}},u_{|_{V_\alpha}}).
\end{equation*}
\end{itemize}
\end{definition}

\begin{definition}
For each $m\geq 1$, the discrete Laplacian on $V_m$ is defined by
\begin{equation}\label{E:DefLaplacian_m}
\Delta_m u(x)=\left(\int_H\psi_{x}^{(m)}\mathrm{d\mu}\right)^{-1}\sum_{\{x, y\} \in E_m}\frac{1}{r_{m}(x,y)}(u(y)-u(x))
\end{equation}
for any $u \in\ell(V_m)$ 
and $x\in V_m$, where $r_m(x,y)$ denotes the resistance between $x$ and $y$, see e.g.~\eqref{eq:SG3.02}.
\end{definition}
Equipped with suitable Dirichlet or Neumann boundary conditions, $-\Delta_m$ is a positive semidefinite self-adjoint operator on $\ell(V_m,\mu_m)$, where the measure $\mu_m$ assigned to each vertex is exactly $\int_{\rm H}\psi_{x}^{(m)}\mathrm{d\mu}$.

\medskip

For any $x\in V_m \setminus V_0$, $x$ is either an endpoint or an internal point of a unique line segment, first born in $V_k$ for some $k\leq m$, denoted by $I^{(m,k)}_{x}$. It can be shown that
\begin{equation*}
\int_H\psi_{x}^{(m)\,d\mu}=\begin{cases}
\frac{1}{3}a^{m}+(\frac{1}{2})^{m-k+1}a^{k-1}(\frac{1}{3}-a)&\text{if $x$ is an endpoint of } I^{(m,k)}_{x},\\
(\frac{1}{2})^{m-k}a^{k-1}(\frac{1}{3}-a)&\text{if $x$ is in the interior of } I^{(m,k)}_{x},
\end{cases}
\end{equation*}
where $a$ is a measure parameter. 
Moreover, if $x$ is an interior point of $I^{(m,k)}_{x}$, then
\begin{equation*}
\Delta_m u(x)=\frac{u(y_0)+u(y_1)-2u(x)}{\left(a^{k-1}(\frac{1}{3}-a)(\frac{1}{2})^{m-k}\right)\left((\frac{1}{2})^{m-k}r^{k-1}\rho\right)}
\end{equation*}
where $y_1$ and $y_2$ are adjacent to $x$ in $I^{(m,k)}_{x}$. Note that by~\eqref{eq:HF09} we have $(5/3) r+\rho=1$. Thus,
\begin{equation}\label{E:RelationLaplacian}
u^{\prime\prime}(x)=\lim_{m\to\infty}\frac{u(y_0)+u(y_1)-2u(x)}{\left((\frac{1}{2})^{m-k}r^{k-1}\rho\right)^{2}}
=\Big(\frac{a}{r}\Big)^{k-1}\frac{1-3a}{3-5r}\Delta_{\mu}u(x).
\end{equation}
Therefore, solving $-\Delta_{\mu}u=\lambda u$ on $\rm H$ will yield a trigonometric function on each interval. The relation between the Laplacian $\Delta_\mu$ introduced in~\eqref{E:DefDeltaMu} and $\Delta_m$ is given in the following theorem.
\begin{theorem}[Pointwise Formula]\label{T:PF}
Let $u \in \dom\Delta_\mu$, then
\begin{equation*}
\Delta_{\mu}u(x)=\lim_{m\rightarrow\infty}\Delta_m u(x)\qquad\text{for all}~x\in V_*\setminus V_0.
\end{equation*}
\end{theorem}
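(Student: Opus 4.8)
The plan is to pair the weak formulation~\eqref{E:DefDeltaMu} against the piecewise harmonic functions $\psi_x^{(m)}$ and to recognize the outcome as a normalized average of $\Delta_\mu u$ over a shrinking neighbourhood of $x$; this is the hybrid analogue of Kigami's pointwise formula, c.f.~\cite[Chapter 3]{Kig01}. Fix $x\in V_*\setminus V_0$ and let $m$ be large enough that $x\in V_m$. The first step is to establish
\[
\E(u,\psi_x^{(m)})=\E_m\bigl(u_{|_{V_m}},\psi_x^{(m)}{}_{|_{V_m}}\bigr)\qquad\text{for all }u\in\F .
\]
Condition (ii) in the definition of $\psi_x^{(m)}$ says precisely that $\psi_x^{(m)}$ is the $\E$-harmonic extension of its own restriction to $V_m$; since for any $u\in\F$ the difference between $u$ and the harmonic extension of $u_{|_{V_m}}$ vanishes on $V_m$ and is therefore $\E$-orthogonal to every $V_m$-harmonic function, the pairing $\E(u,\psi_x^{(m)})$ depends only on $u_{|_{V_m}}$ and equals the level-$m$ form. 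Because $\psi_x^{(m)}$ equals $1$ at $x$ and $0$ at every other vertex of $V_m$, only the edges of $E_m$ incident to $x$ survive in $\E_m$, so
\[
\E(u,\psi_x^{(m)})=-\sum_{\{x,y\}\in E_m}\frac{1}{r_m(x,y)}\bigl(u(y)-u(x)\bigr).
\]

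Next, for $u\in\dom\Delta_\mu$ I would apply~\eqref{E:DefDeltaMu} with the test function $v=\psi_x^{(m)}\in\F$ to get $\E(u,\psi_x^{(m)})=-\int_{\rm H}\Delta_\mu u\,\psi_x^{(m)}\,\mathrm d\mu$. Combining this with the previous display and dividing by $\int_{\rm H}\psi_x^{(m)}\,\mathrm d\mu$, which is strictly positive by the explicit expression recorded just before the theorem, yields
\[
\Delta_m u(x)=\Bigl(\int_{\rm H}\psi_x^{(m)}\,\mathrm d\mu\Bigr)^{-1}\int_{\rm H}\Delta_\mu u\,\psi_x^{(m)}\,\mathrm d\mu .
\]
Thus $\Delta_m u(x)$ is exactly the integral of $\Delta_\mu u$ against the probability measure $\nu_x^{(m)}:=\psi_x^{(m)}\,\mathrm d\mu\big/\int_{\rm H}\psi_x^{(m)}\,\mathrm d\mu$.

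Finally, I would let $m\to\infty$. By the maximum principle the harmonic function $\psi_x^{(m)}$ takes values in $[0,1]$, and it is supported on the union of the finitely many $m$-cells containing $x$: the sub-interval $[y_0,y_1]\subseteq I^{(m,k)}_x$ when $x$ is interior to a bond, or the adjacent triangular and segment cells when $x$ is an endpoint. In either case $\diam\supp\psi_x^{(m)}$ is controlled by products of the contraction ratios and hence tends to $0$, so the measures $\nu_x^{(m)}$ concentrate weakly at $x$. Since $\Delta_\mu u\in C({\rm H})$ for $u\in\dom\Delta_\mu$, continuity at $x$ gives $\int_{\rm H}\Delta_\mu u\,\mathrm d\nu_x^{(m)}\to\Delta_\mu u(x)$, which is the asserted formula.

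The step I expect to be the main obstacle is the first identity $\E(u,\psi_x^{(m)})=\E_m(u_{|_{V_m}},\psi_x^{(m)}{}_{|_{V_m}})$: one must verify carefully that condition (ii) characterizes $\psi_x^{(m)}$ as a genuine $\E$-harmonic extension on all of $\rm H$, and not merely as a discretely harmonic function on the approximating graphs, so that the orthogonality argument and the trace identity for $(\E,\F)$ apply. This relies on the harmonicity and trace theory of resistance forms on finitely ramified cell structures developed in Section~\ref{section:FRCS} together with the compatibility of the sequence $\{(\E_n,\ell(V_n))\}_{n\geq 0}$; the remaining ingredient, the vanishing of $\diam\supp\psi_x^{(m)}$, is purely geometric and follows from the contraction ratios built into the construction of $\rm H$.
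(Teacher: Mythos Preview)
Your proposal is correct and follows precisely the standard argument the paper has in mind: the paper's own proof consists of the single line ``Analogous to~\cite[Theorem 2.2.1]{Str01},'' and the Strichartz/Kigami proof you have reproduced---pair against the tent function $\psi_x^{(m)}$, use harmonicity to reduce $\E(u,\psi_x^{(m)})$ to the level-$m$ energy, invoke the weak formulation, and pass to the limit via continuity of $\Delta_\mu u$ and shrinking supports---is exactly that argument adapted to the hybrid setting.
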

\begin{proof}
Analogous to~\cite[Theorem 2.2.1]{Str01}.
\end{proof}

\subsubsection{Numerical computation Method for the spectrum of $\Delta_m$}
In view of Theorem~\ref{T:PF} and as a first step to understand the behavior of the spectrum of $\Delta_\mu$, the primary goal of this paragraph is to explore the patterns that the spectrum of $\Delta_m$ shows. 
Recall that $\lambda_m$ is called a \textit{Dirichlet} (respectively) \textit{Neumann eigenvalue} of $\Delta_m$ with corresponding eigenfunction $u_m$ when
\begin{equation*}
\begin{cases} 
-\Delta_mu_m(x)=\lambda_m u_m(x) & \text{for } x\in V_m\setminus V_0,\\
u_m(x)=0, & \text{for } x\in V_0,
\end{cases}
\end{equation*}
respectively
\begin{equation*}
\begin{cases} 
-\Delta_mu_m(x)=\lambda_m u_m(x), & \mbox{for } x\in V_m\setminus V_0\\
(u_m(x)-u_m(y_1))+(u_m(x)-u_m(y_2))=0, & \mbox{for } x\in V_0,
\end{cases}
\end{equation*}
where $y_1$ and $y_2$ are adjacent to $x$ in the line segment $I_x^{(m,k)}$. A simple calculation shows that the number of vertices in $V_m \setminus V_0$ is
\begin{equation*}
N_m:=\frac{3}{2}(3^{m+1}-1)-3\cdot 2^{m},
\end{equation*}
hence, by definition, finding Dirichlet or Neumann eigenvalues and eigenfunctions of $\Delta_m$ is tantamount to finding the eigenvalues and eigenfunctions of an $N_m\times N_m$ matrix. In practice, we use the command 'eig' in MATLAB to obtain a numerical solution. Note that 'eig' will give a list of $N_m$ eigenvalues without indicating any information about their multiplicity. Due to numerical errors, for eigenvalues with high multiplicity, elements in the list that actually correspond to the same eigenvalue slightly differ from each other. At the same time, there exist very close but still different eigenvalues in the spectrum. Since it is not obvious how to estimate numerical errors occurring at different levels, or how to detect gaps between distinct eigenvalues, the only way we have been able to obtain each eigenvalue's multiplicity is by manually checking the spectrum.

\subsubsection{Spectrum: data and patterns}
In this paragraph we give several numerical computations of the spectrum and analyze some properties that can be derived from them. A collection of graphics, numerical tools, results and code can be found in the website~\cite{CGSZ17}.

\medskip

\textbf{Existence of D-N eigenvalues. }
On each level, both Dirichlet and Neumann eigenvalues are computed. Among them, the existence of D-N eigenvalues is trivial. For example, at the first level, consider the function with 0 at 3 boundary points and at the middle hexagon, it takes $1$ and $-1$ consecutively. It is a D-N eigenfunction. Meanwhile, if $u_m$ is a D-N eigenfunction on $\Gamma_m$ with eigenvalue $\lambda_m$, then we can contract $u_m$ into one of the second largest triangles on $\Gamma_{m+1}$ and assign other nodes with value 0, see e.g. Figure~\ref{F:Hanoi_mEvs} 
\begin{figure}[H]
\begin{subfigure}{.475\textwidth}
\includegraphics[width=\linewidth]{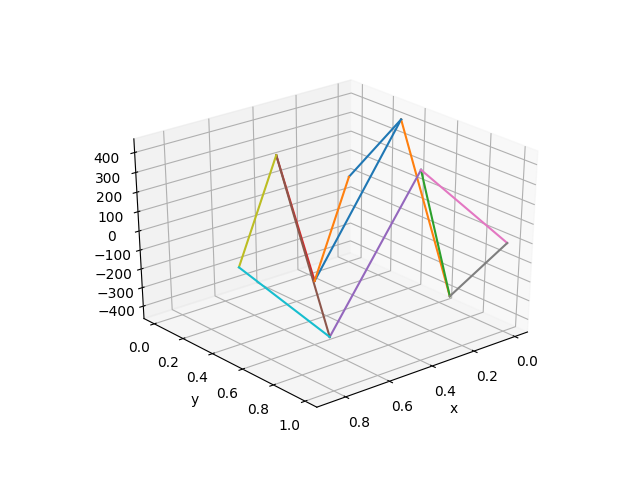}\par 
\caption{{\small $a=r=\frac{1}{6}$, level=1, $6$th eigenfunction=149.54}}
\end{subfigure}
\begin{subfigure}{.475\textwidth}
\includegraphics[width=\linewidth]{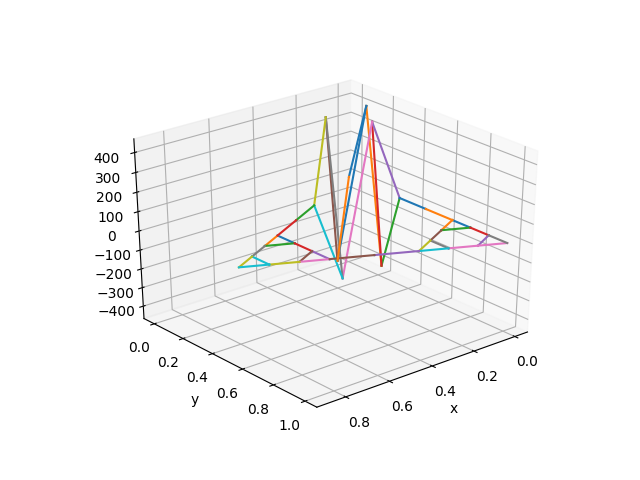}\par 
\caption{{\small $a=r=\frac{1}{6}$, level=2, $25$th eigenfunction=5383.385}}
\end{subfigure}
\caption{Examples of Localized D-N eigenfunctions}
\end{figure}
\begin{figure}[H]\ContinuedFloat
\par\bigskip
\begin{subfigure}[b]{0.45\textwidth}
\includegraphics[width=\linewidth]{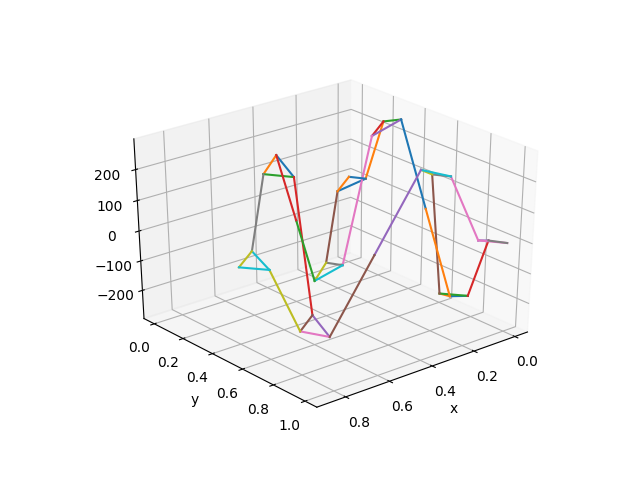}\par
\caption{{\small $a=r=\frac{1}{6}$, level=2, $7$th eigenfunction=257.045}}
\end{subfigure}
\begin{subfigure}[b]{0.45\textwidth}
\includegraphics[width=\linewidth]{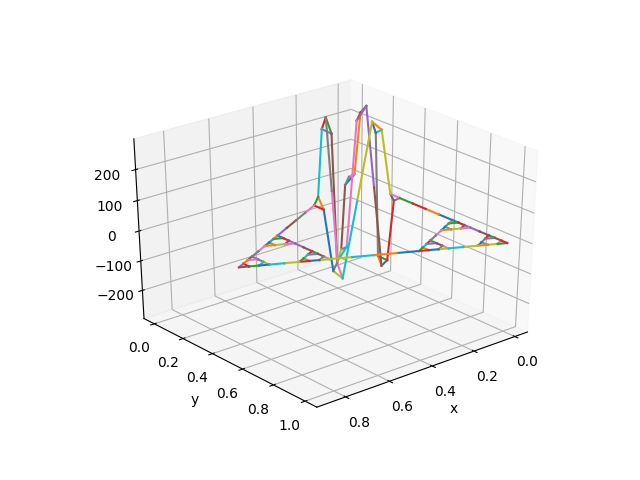}\par
\caption{{\small $a=r=\frac{1}{6}$, level=3, $36$th eigenfunction=9253.609}}
\end{subfigure}
\caption{Localized D-N eigenfunctions, see~\cite{CGSZ17} for further examples.}
\label{F:Hanoi_mEvs}
\end{figure}

It can be verified that such a function generates a localized eigenfunction on $V_{m+1}$. Let us denote it by $u_{m+1}$ and by $\lambda_{m+1}$ its corresponding eigenvalue. Then,
\begin{equation}\label{E:EigvRelation}
\lambda_m = \frac{\lambda_{m+1}}{ra}.
\end{equation}
Hence, all Dirichlet eigenvalues of $\Delta_m$ can be classified into 3 groups: Dirichlet but not Neumann eigenvalues, D-N eigenvalues that satisfy relation~\eqref{E:EigvRelation} with some D-N eigenvalue of $\Delta_{m-1}$, or D-N eigenvalue without the latter property. The table below shows the eigenvalues of $\Delta_m$ for $m=1,\ldots, 7$. Eigenvalues of the second type are colored yellow and eigenvalues of the third type are colored in green.
\begin{table}[H]
\centering
\begin{tabular}{ | c | c | c | c | c | c | c | }
\hline
\textbf{\large{level 1}} & \textbf{\large{level 2}} & \textbf{\large{level 3}} & \textbf{\large{level 4}} & \textbf{\large{level 5}} & \textbf{\large{level 6}} & \textbf{\large{level 7}} \\ \hline
43.20, 1 & 33.98, 1 & 33.74, 1 & 33.69, 1 & 33.68, 1 & 33.68, 1 & 33.68, 1 \\ \hline
57.19, 2 & 47.86, 2 & 49.44, 2 & 49.79, 2 & 49.88, 2 & 49.90, 2 & 49.91, 2 \\ \hline
135.55, 2 & 104.23, 2 & 117.75, 2 & 121.25, 2 & 122.12, 2 & 122.34, 2 & 122.40, 2 \\ \hline
\cellcolor{green!25!}149.54, 1 & 126.28, 1 & 184.79, 1 & 202.29, 1 & 206.86, 1 & 208.02, 1 & 208.31, 1 \\ \hline
 & \cellcolor{green!25!}257.04, 1 & \cellcolor{green!25!}207.30, 1 & \cellcolor{green!25!}217.81, 1 & \cellcolor{green!25!}220.21, 1 & \cellcolor{green!25!}220.80, 1 & \cellcolor{green!25!}220.95, 1 \\ \hline
 & 265.30, 2 & 289.42, 2 & 331.89, 2 & 341.94, 2 & 344.43, 2 & 345.05, 2 \\ \hline
 & 1881.37, 2 & 397.92, 2 & 474.25, 2 & 495.84, 2 & 501.41, 2 & 502.81, 2 \\ \hline
 & 1881.41, 1 & 466.78, 1 & \cellcolor{green!25!}584.16, 1 & \cellcolor{green!25!}615.23, 1 & \cellcolor{green!25!}623.24, 1 & \cellcolor{green!25!}625.26, 1 \\ \hline
 & 3103.35, 2 & \cellcolor{green!25!}{476.77, 1 }& 700.76, 1 & 768.64, 1 & 786.02, 1 & 790.39, 1 \\ \hline
 & 3103.74, 1 & 514.87, 2 & 787.80, 2 & 869.46, 2 & 890.54, 2 & 895.85, 2 \\ \hline
 & \cellcolor{green!25!}3259.84, 1 & 1607.46, 2 & 1089.04, 2 & 1264.93, 2 & 1309.12, 2 & 1320.11, 2 \\ \hline
 & \cellcolor{green!25!}3260.17, 2 & 1607.46, 1 & \cellcolor{green!25!}{ 1130.98, 1} & \cellcolor{green!25!}{ 1328.95, 1} & \cellcolor{green!25!}{ 1379.76, 1} & \cellcolor{green!25!}{1392.49, 1} \\ \hline
 & 4883.03, 2 & \cellcolor{green!25!}2150.98, 1 & 1455.32, 1 & 1653.59, 1 & 1662.94, 1 & 1664.78, 1 \\ \hline
 & 4883.03, 1 & \cellcolor{green!25!}2150.99, 2 & 1481.31, 2 & 1679.55, 2 & 1686.86, 2 & 1688.66, 2 \\ \hline
 & \cellcolor{green!25!}4889.90, 1 & 2314.95, 2 & 1666.05, 2 & 1861.84, 1 & 1948.37, 1 & 1970.24, 1 \\ \hline
 & \cellcolor{green!25!}4889.90, 2 & 2314.96, 1 & 1675.70, 1 & 1870.76, 2 & 1961.04, 2 & 1983.08, 2 \\ \hline
 & \cellcolor{yellow!50!}5383.38, 3 & 4046.57, 2 & 1812.72, 2 & \cellcolor{green!25!}2405.81, 1 & \cellcolor{green!25!}2492.48, 1 & \cellcolor{green!25!}2510.96, 1 \\ \hline
 &  & 4046.57, 1 & \cellcolor{green!25!}1814.01, 1 & 2447.98, 2 & 2565.41, 2 & 2591.25, 2 \\ \hline
 &  & \cellcolor{green!25!}6047.56, 1 & 2042.00, 1 & 2740.18, 2 & 2847.39, 2 & 2876.06, 2 \\ \hline
 &  & \cellcolor{green!25!}6047.56, 2 & 2042.44, 2 & \cellcolor{green!25!}2949.41, 1 & 3148.07, 1 & 3178.77, 1 \\ \hline
 &  & 6264.63, 2 & \cellcolor{green!25!}2557.42, 1 & 3012.62, 1 & \cellcolor{green!25!}3148.76, 1 & \cellcolor{green!25!}3203.72, 1 \\ \hline
 &  & 6264.63, 1 & \cellcolor{green!25!}2557.63, 2 & 3256.00, 2 & 3438.70, 2 & 3485.04, 2 \\ \hline
 &  & \cellcolor{yellow!50!}{ 9253.61, 3} & 2967.71, 2 & 3571.97, 2 & 3954.97, 2 & 4055.68, 2 \\ \hline
 &  & 9551.29, 3 & 2967.75, 1 & 3674.09, 1 & 4019.24, 1 & 4108.50, 1 \\ \hline
 &  & \cellcolor{green!25!}9552.10, 3 & 5053.44, 3 & \cellcolor{green!25!}4260.92, 1 & 4843.37, 2 & 4972.61, 2 \\ \hline
 &  & 67729.76, 3 & \cellcolor{yellow!50!}{7462.73, 3 }& 4270.27, 2 & \cellcolor{green!25!}4923.03, 1 & \cellcolor{green!25!}5092.29, 1 \\ \hline
 &  & \cellcolor{green!25!}83789.93, 3 & \cellcolor{green!25!}7514.45, 3 & 4962.44, 1 & 5503.63, 1 & 5592.27, 1 \\ \hline
 &  & \cellcolor{green!25!}83790.87, 3 & 9156.79, 3 & 5022.27, 2 & 5738.82, 2 & 5866.46, 2 \\ \hline
 &  & 111725.04, 3 & 11142.55, 3 & 5527.03, 2 & 6202.97, 2 & 6437.89, 2 \\ \hline
\end{tabular}
\caption{Bottom part of the Dirichlet spectrum, $r=a=1/6$, levels $1-7$. Eigenvalues are shown with their multiplicity and listed in increasing order.}
\label{Table:val_Dir_spct}
\end{table}
\textbf{Convergence of the spectrum.} 
Another clear pattern is what we call convergence of the spectrum: if $\lambda_m^{(k)}$ denotes the $k$th smallest eigenvalue of $\Delta_m$, we say the spectrum 
of $\Delta_m$ is convergent if the limit
\begin{equation*}
\lim_{m\to\infty} \lambda_m^{(k)}
\end{equation*}
exists for all $k$. From the data 
displayed in Table~\ref{Table:val_Dir_spct}, 
eigenvalues at the bottom of the spectrum seem to converge 
with convergence order 1, 
because 
for any fixed $k$, the ratio 
$\frac{|\lambda_{m+1}^{(k)}-\lambda_m^{(k)}|}{|\lambda_m^{(k)}-\lambda_{m-1}^{(k)}|}$ is roughly a constant. 
The reason for this pattern still remains unclear.
\begin{table}[H]
\centering
\begin{tabular}{|c|c|c|c|c|c|c|c|}
\hline
 & \textbf{\large{level 1}} & \textbf{\large{level 2}} & \textbf{\large{level 3}} & \textbf{\large{level 4}} & \textbf{\large{level 5}} & \textbf{\large{level 6}} & \textbf{\large{level 7}}  \\ \hline
Eigenvalue & 43.2000  & 33.9771  & 33.7412  & 33.6913  & 33.6794  & 33.6764  & 33.6756  \\ \hline
Difference &          &          & 0.2359   & 0.0499   & 0.0119   & 0.0030   & 0.0008   \\ \hline
Eigenvalue & 57.1907  & 47.8622  & 49.4401  & 49.7929  & 49.8791  & 49.9005  & 49.9058  \\ \hline
Difference &          &          & 1.5779   & 0.3528   & 0.0862   & 0.0214   & 0.0053   \\ \hline
Eigenvalue & 135.5477 & 104.2339 & 117.748  & 121.248  & 122.1248 & 122.3441 & 122.3989 \\ \hline
Difference &          &          & 13.5141  & 3.5000   & 0.8768   & 0.2193   & 0.0548   \\ \hline
Eigenvalue & 149.5385 & 126.2839 & 184.7948 & 202.2888 & 206.8625 & 208.0185 & 208.3083 \\ \hline
Difference &          &          & 58.5109  & 17.4940  & 4.5737   & 1.156    & 0.2853   \\ \hline
Eigenvalue &          & 257.0447 & 207.2981 & 217.8053 & 220.2108 & 220.8016 & 220.9487 \\ \hline
Difference &          &          & 49.7466  & 10.5072  & 2.4055   & 0.5908   & 0.1471   \\ \hline
Eigenvalue &          & 265.302  & 289.4171 & 331.8853 & 341.9429 & 344.4302 & 345.0504 \\ \hline
Difference &          &          &          & 42.4682  & 10.0576  & 2.4873   & 0.6202   \\ \hline
\end{tabular}
\caption{Convergence of the Dirichlet spectrum, with the estimated convergence order 1.}
\label{my-label}
\end{table}

\textbf{Top and bottom of the spectrum.} 
For all the measure and resistance parameters tried in the simulations, the multiplicities of the first few eigenvalues are always `1-2-2-1'. For the smallest eigenvalue, we can prove that it has multiplicity 1, and its corresponding eigenfunction is invariant under the dihedral symmetry group $D_3$ however further reasons for this `1-2-2-1' pattern remain unknown. In addition, the top of the spectrum mostly consists of D-N eigenvalues with high multiplicities. The code developed for the simulations is available at~\cite{CGSZ17}.

\begin{table}[H]
\centering
\begin{tabular}{|c|c|}
\hline
\multicolumn{2}{|c|}{level 7} \\ \hline
Dirichlet & Neumann \\ \hline
113759984105.32153, 3 & 25411878638.230247, 3 \\ \hline
\rowcolor{green!25!} 
114102841006, 6 & 114102841006, 6 \\ \hline
\rowcolor{yellow!40!} 
114788072698, 18 & 114788072698, 18 \\ \hline
\rowcolor{yellow!40!} 
116839632421, 54 & 116839632421, 54 \\ \hline
\rowcolor{yellow!40!} 
122950383156, 162 & 122950383156, 162 \\ \hline
\rowcolor{yellow!40!}  
140734901210, 243 & 140734901210, 243 \\ \hline
\rowcolor{yellow!40!}  
140736477695, 243 & 140736477626, 243 \\ \hline
187655171187, 3 & 186882957201, 3 \\ \hline
\rowcolor{green!25!} 
187659146198, 6 & 187659146198, 6 \\ \hline
\rowcolor{yellow!40!}  
187667196655, 18 & 187667196655, 18 \\ \hline
\rowcolor{yellow!40!}  
187692182839, 54 & 187692182839, 54 \\ \hline
\rowcolor{yellow!40!}  
187775569901, 162 & 187775569901, 162 \\ \hline
\rowcolor{yellow!40!}  
188131568230, 243 & 188131568230, 243 \\ \hline
\end{tabular}
\caption{Top of the spectrum, eigenvalues in increasing order, $r=a=1/6$, level 7.}
\end{table}
\begin{table}[H]\ContinuedFloat
\begin{tabular}{|c|c|}
\hline
\multicolumn{2}{|c|}{level 7} \\ \hline
Dirichlet & Neumann \\ \hline
188147174195, 3 & 188147026223, 3 \\ \hline
\rowcolor{green!25!} 
188147176156, 6 & 188147176156, 6 \\ \hline
\rowcolor{yellow!40!}  
188147180228, 18 & 188147180228, 18 \\ \hline
\rowcolor{yellow!40!}  
188147193778, 54 & 188147193778, 54 \\ \hline
\rowcolor{yellow!40!}  
188147252150, 162 & 188147252150, 162 \\ \hline
\rowcolor{yellow!40!}  
197109853834, 243 & 197109853834, 243 \\ \hline
\rowcolor{yellow!40!}  
197130271198, 243 & 197130271198, 243 \\ \hline
\rowcolor{yellow!40!}  
197130271465, 243 & 197130271465, 243 \\ \hline
295258340787, 3 & 294738682923, 3 \\ \hline
\rowcolor{green!25!} 
295259492723, 6 & 295259492723, 6 \\ \hline
\rowcolor{yellow!40!}  
295261805573, 18 & 295261805573, 18 \\ \hline
\rowcolor{yellow!40!}  
295268816917, 54 & 295268816917, 54 \\ \hline
\rowcolor{yellow!40!}  
295290531946, 162 & 295290531946, 162 \\ \hline
\rowcolor{yellow!40!}  
295362581347, 243 & 295362581347, 243 \\ \hline
\rowcolor{yellow!40!}  
295362609295, 243 & 295362609295, 243 \\ \hline
\rowcolor{yellow!40!}  
295673500703, 243 & 295673500703, 243 \\ \hline
\rowcolor{yellow!40!}  
295673630427, 486 & 295673630427, 486 \\ \hline
\rowcolor{yellow!40!}  
325512681630, 729 & 325512681630, 729 \\ \hline
\end{tabular}
\caption{(cont.) Top of the spectrum, eigenvalues in increasing order, $r=a=1/6$, level 7.}
\end{table}


\subsubsection{Eigenvalue counting function}
A simple modification of the p.c.f.\ case yields that the self-adjoint operator $-\Delta_m$ with either Dirichlet of Neumann boundary conditions has a compact resolvent. Therefore, its spectrum is pure point, eigenvalues all have finite multiplicity and the only accumulation point is $\infty$. In this paragraph we study the corresponding \textit{eigenvalue counting function}
\begin{equation}\label{E:Def_Ecf}
N^{(m)}_{N/D}(x)=\#\{\lambda_m\text{ (D/N)eigenvalue of }\Delta_m~|~\lambda_m\leq x\}
\end{equation}
for several choices of the measure and resistance parameters.

\medskip

The horizontal lines in the graphs of $N^{(m)}_{D/N}(x)$ displayed in Figure~\ref{F:Ecf_discrete_plot} represent gaps in the spectrum whereas vertical lines account the multiplicities. Plotting the eigenvalue counting function in a `log-log' scale, see Figure~\ref{F:Ecf_discrete_plot}, a special bifurcation pattern is revealed that hints to different slopes in the beginning part and the later part of the function. 
Since the convergence of the spectrum of $\Delta_m$ starts from the bottom parts, we apply linear regression to approximate the beginning part of each `log-log' plot, and estimate the spectral dimension, i.e. the number $d_S>0$ such that
\begin{equation*}
N_{N/D}(x)\sim x^{d_S/2}\qquad\text{as }x\to\infty.
\end{equation*}

\begin{figure}[H]
\begin{subfigure}{.45\textwidth}
\includegraphics[width=\linewidth]{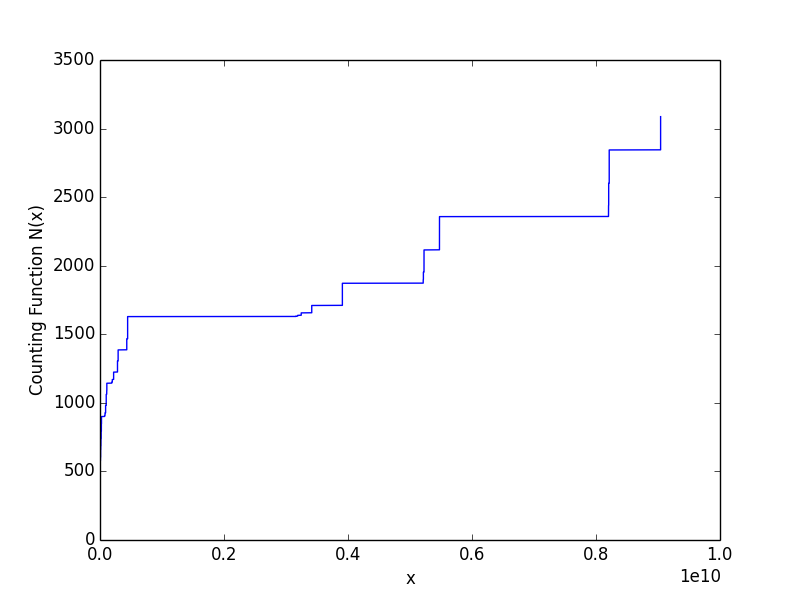}\par 
\caption{{\small $a=r=\frac{1}{6}$, level=6}}
\end{subfigure}
\begin{subfigure}{.45\textwidth}
\includegraphics[width=\linewidth]{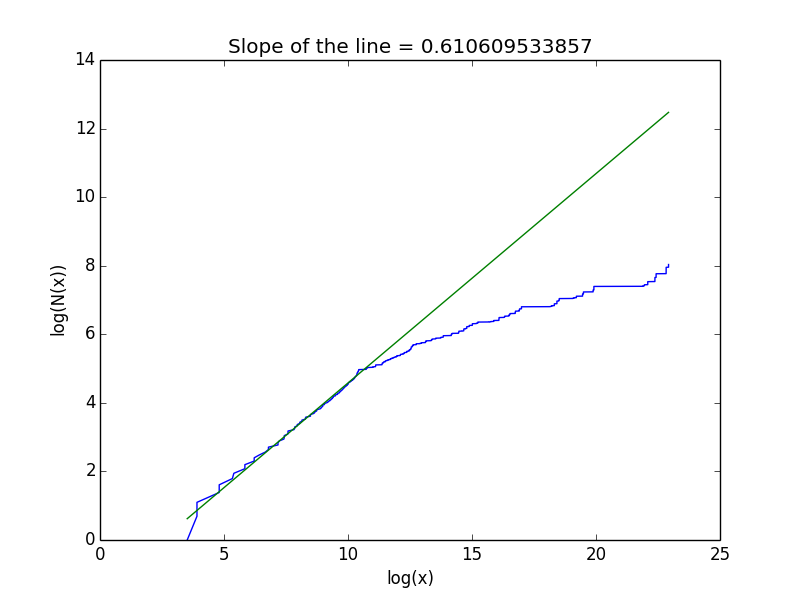}\par 
\caption{$N(x) \sim x^{0.61061}$}
\end{subfigure}
\par\bigskip
\begin{subfigure}[b]{0.45\textwidth}
\includegraphics[width=\linewidth]{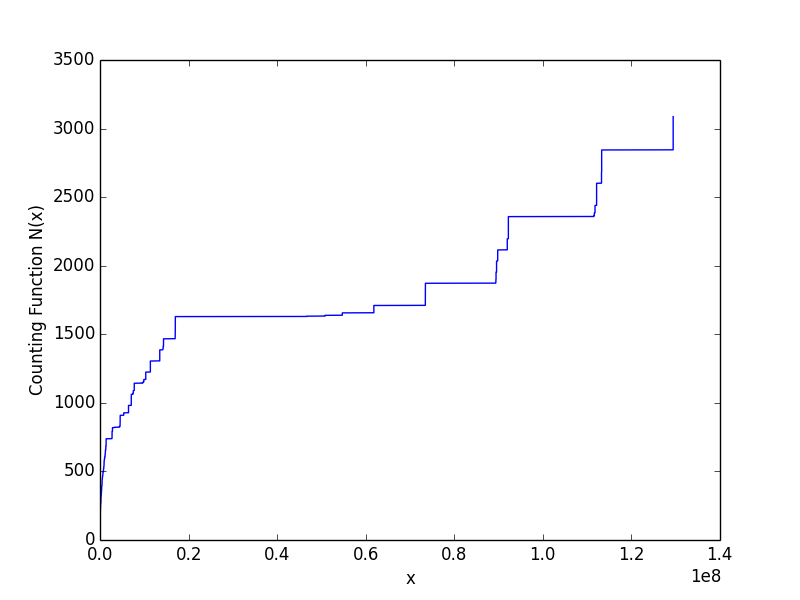}\par
\caption{{\small $a=r=\frac{1}{4}$, level=6}}
\end{subfigure}
\begin{subfigure}[b]{0.45\textwidth}
\includegraphics[width=\linewidth]{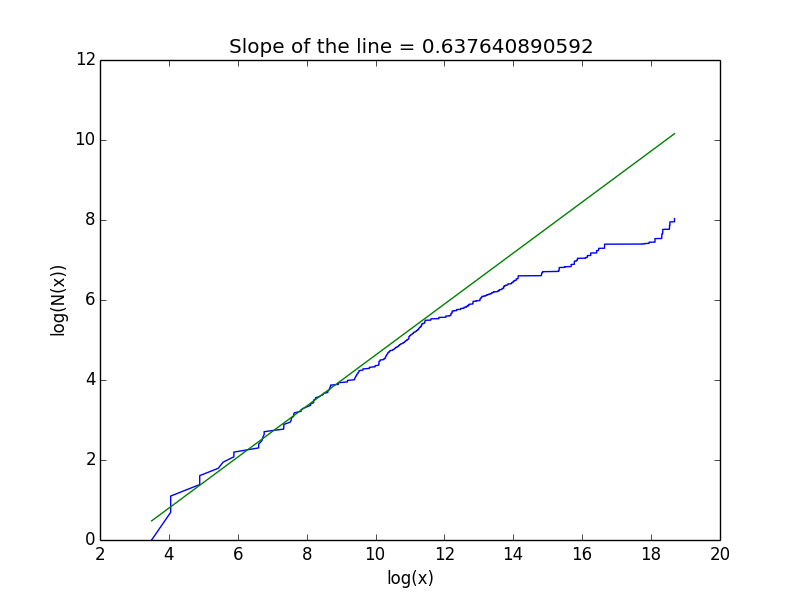}\par 
\caption{$N(x) \sim x^{0.63764}$}
\end{subfigure}
\par\bigskip
\begin{subfigure}[b]{0.45\textwidth}
\includegraphics[width=\linewidth]{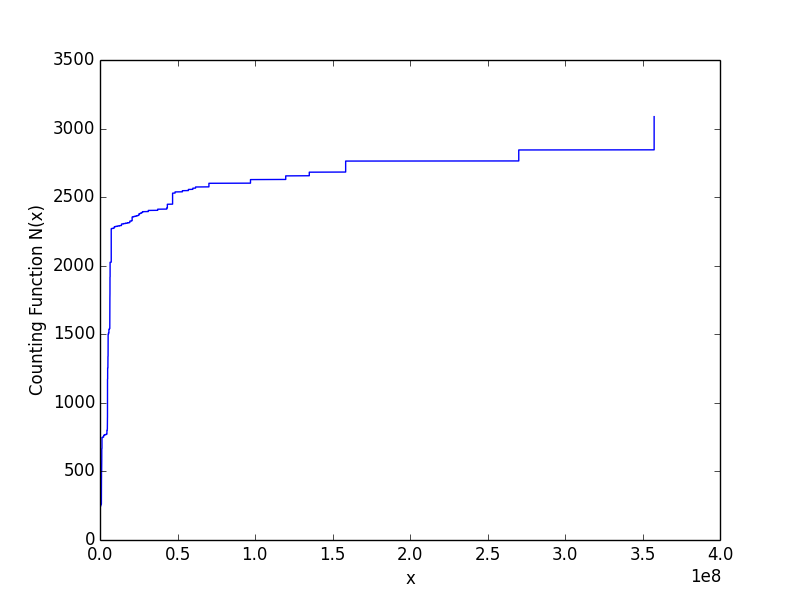}\par 
\caption{{\small $a=r=\frac{333}{1000}$, level=6}}
\end{subfigure}
\begin{subfigure}[b]{0.45\textwidth}
\includegraphics[width=\linewidth]{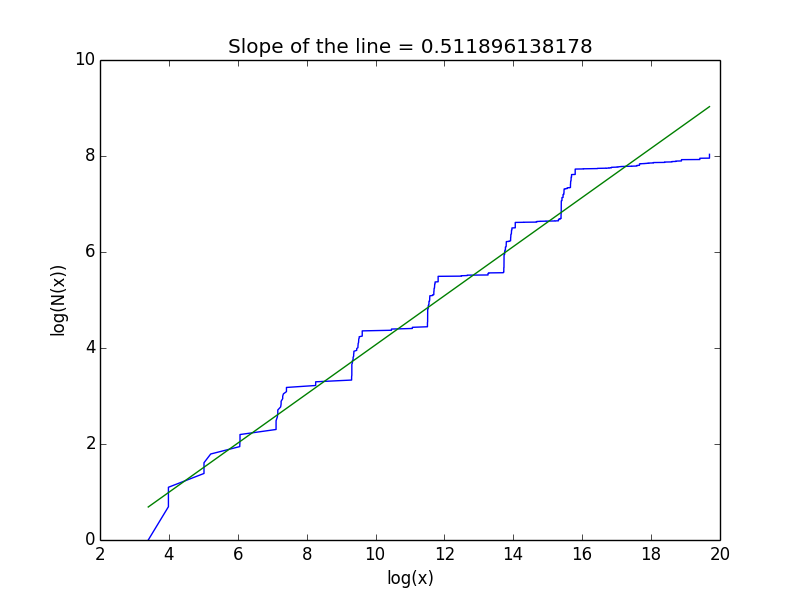}\par 
\caption{$N(x) \sim x^{0.51190}$}
\end{subfigure}
\caption{Eigenvalue counting functions of discrete Dirichlet Laplacians, see~\cite{CGSZ17} for further examples.}
\end{figure}
\begin{figure}[H]\ContinuedFloat
\par\bigskip
\begin{subfigure}[b]{0.45\textwidth}
\includegraphics[width=\linewidth]{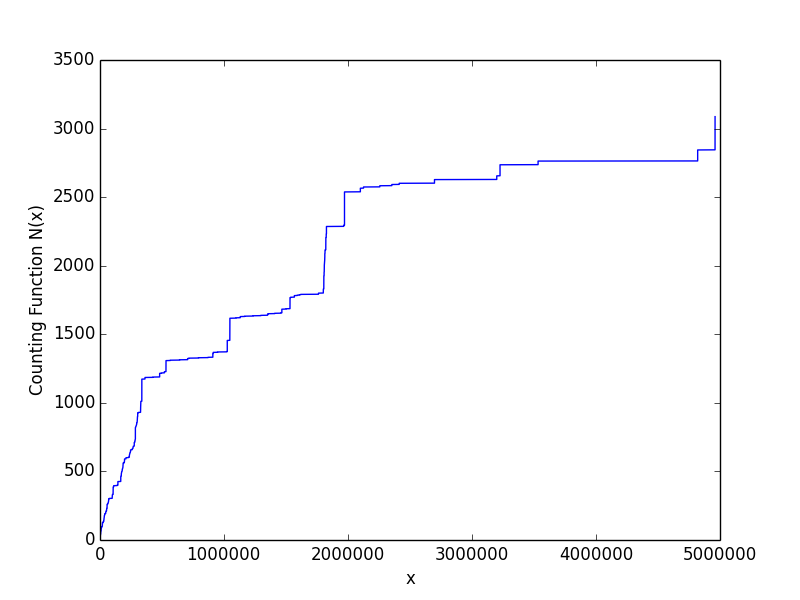}\par 
\caption{{\small $a=\frac{10}{33}, r=\frac{11}{20}$, level=6}}
\end{subfigure}
\begin{subfigure}[b]{0.45\textwidth}
\includegraphics[width=\linewidth]{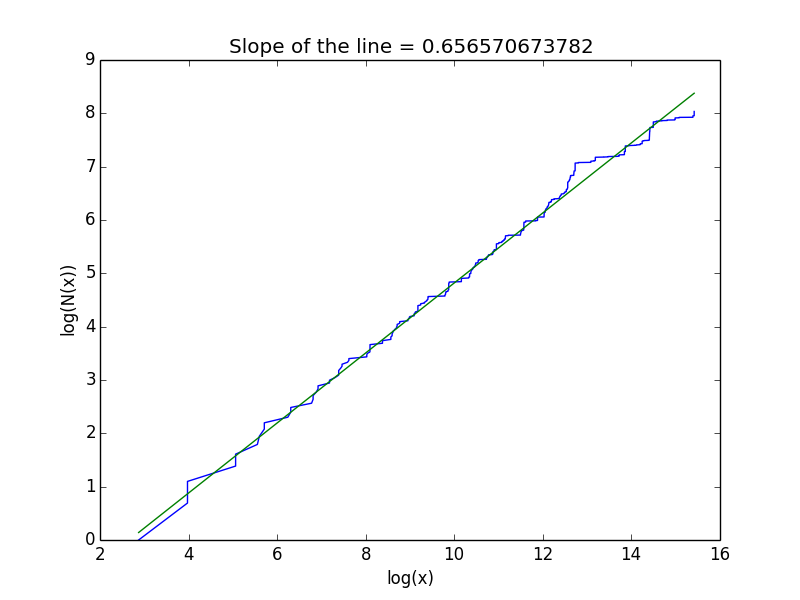}\par 
\caption{$N(x) \sim x^{0.65657}$}
\end{subfigure}
\caption{(cont.) Eigenvalue counting functions of discrete Dirichlet Laplacians, see~\cite{CGSZ17} for further examples.}
\label{F:Ecf_discrete_plot}
\end{figure}

The estimated spectral dimension allows us to plot an approximated Weyl ratio 
\begin{equation*}
\frac{N_{N/D}(x)}{x^{d_S/2}}.
\end{equation*}
The following graphs are the 'y-$\log(x)$' plots of Weyl ratios corresponding to different choices of $a$ and $r$ at level 6. We can observe that for small $x$, where the spectrum begins to converge, the graphs look like periodic functions.

\begin{figure}[H]
\begin{subfigure}{.45\textwidth}
\includegraphics[width=\linewidth]{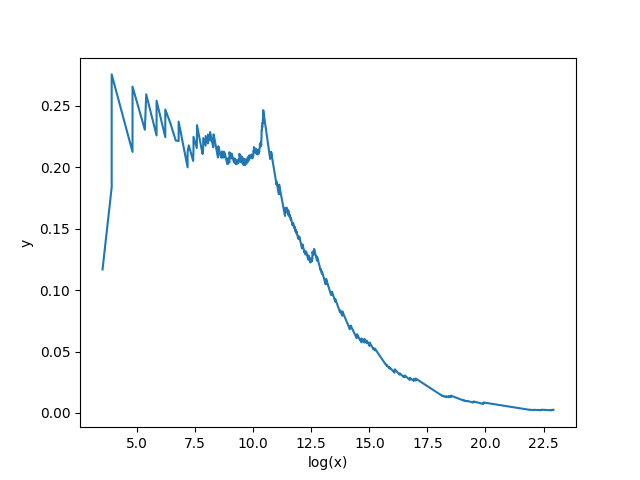}\par 
\caption{{\small $a=r=\frac{1}{6}$, level=6}}
\end{subfigure}
\begin{subfigure}{.45\textwidth}
\includegraphics[width=\linewidth]{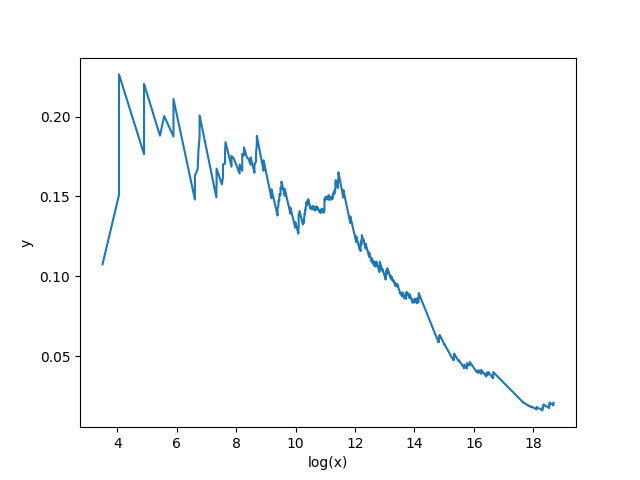}\par 
\caption{{\small $a=r=\frac{1}{4}$, level=6}}
\end{subfigure}
\caption{'y-$\log(x)$' plots of Weyl ratios corresponding to different choices of $a$ and $r$ at level 6, see~\cite{CGSZ17} for further examples.}
\end{figure}
\begin{figure}[H]\ContinuedFloat
\par\bigskip
\begin{subfigure}[b]{0.45\textwidth}
\includegraphics[width=\linewidth]{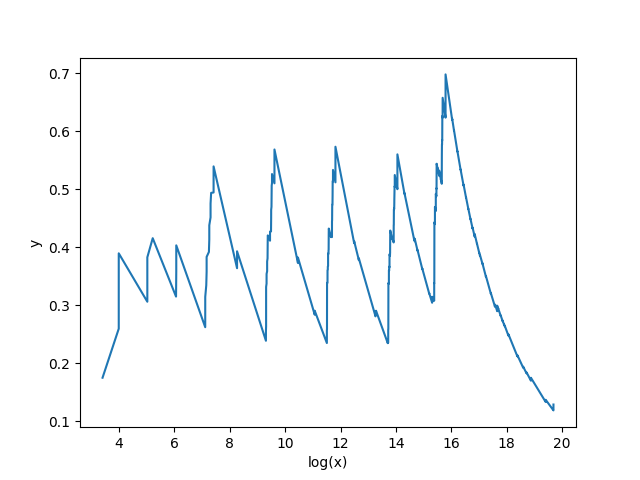}\par
\caption{{\small $a=r=\frac{333}{1000}$, level=6}}
\end{subfigure}
\begin{subfigure}[b]{0.45\textwidth}
\includegraphics[width=\linewidth]{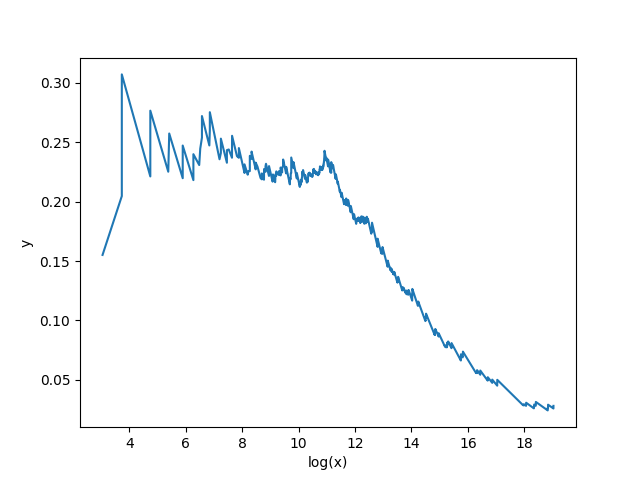}\par 
\caption{{\small $a=\frac{1}{6}$, $r=\frac{333}{1000}$, level=6}}
\end{subfigure}
\par\bigskip
\begin{subfigure}[b]{0.45\textwidth}
\includegraphics[width=\linewidth]{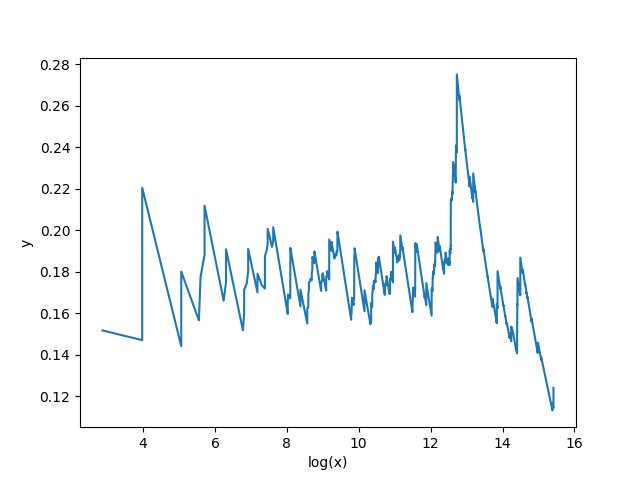}\par 
\caption{{\small $a=\frac{10}{33}$, $r=\frac{11}{20}$, level=6}}
\end{subfigure}
\begin{subfigure}[b]{0.45\textwidth}
\includegraphics[width=\linewidth]{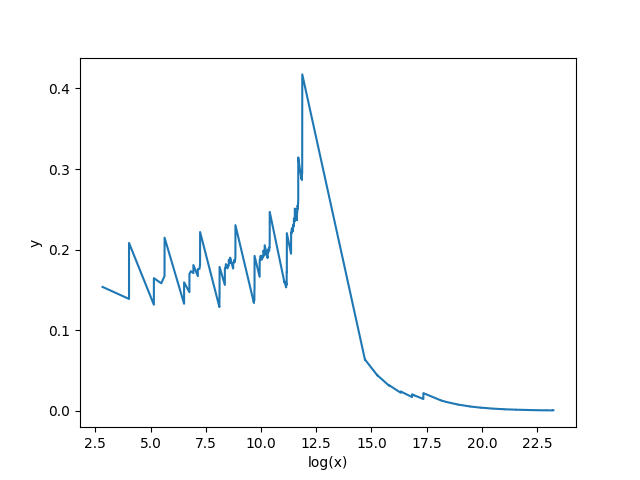}\par 
\caption{{\small $a=\frac{333}{1000}$, $r=\frac{599}{1000}$, level=6}}
\end{subfigure}
\caption{(cont.) 'y-$\log(x)$' plots of Weyl ratios corresponding to different choices of $a$ and $r$ at level 6, see~\cite{CGSZ17} for further examples.}
\label{F:Ecf_weylratio_plot}
\end{figure}

One can observe that this quantity approaches the Weyl ratio of the Sierpinski gasket as $a$ tends to $\frac{1}{3}$ and $r$ tends to $\frac{3}{5}$.


\subsubsection{Eigenfunctions on the Hanoi attractor}
The plots displayed in Figure~\ref{F:Efcts_discrete_Hanoi_1} and~\ref{F:Efcts_discrete_Hanoi_2} show that the 'spectral decimation' scheme valid in some fractal spaces, see e.g.~\cite{FS92,Str01} does not apply to the Hanoi attractor. In the computations presented, the parameters have been chosen to be $r=a=\frac{1}{6}$ and $r=a=\frac{1}{10}$. At each level, the eigenfunction corresponding to the smallest eigenvalue is restricted to one edge of $\Gamma_m$. The graphs of the functions show that the corresponding eigenfunction from level $m+1$ does not match the eigenfunction from level $m$ at some vertices of $\Gamma_m$.

\begin{figure}[H]
\centering
\includegraphics[scale=.37]{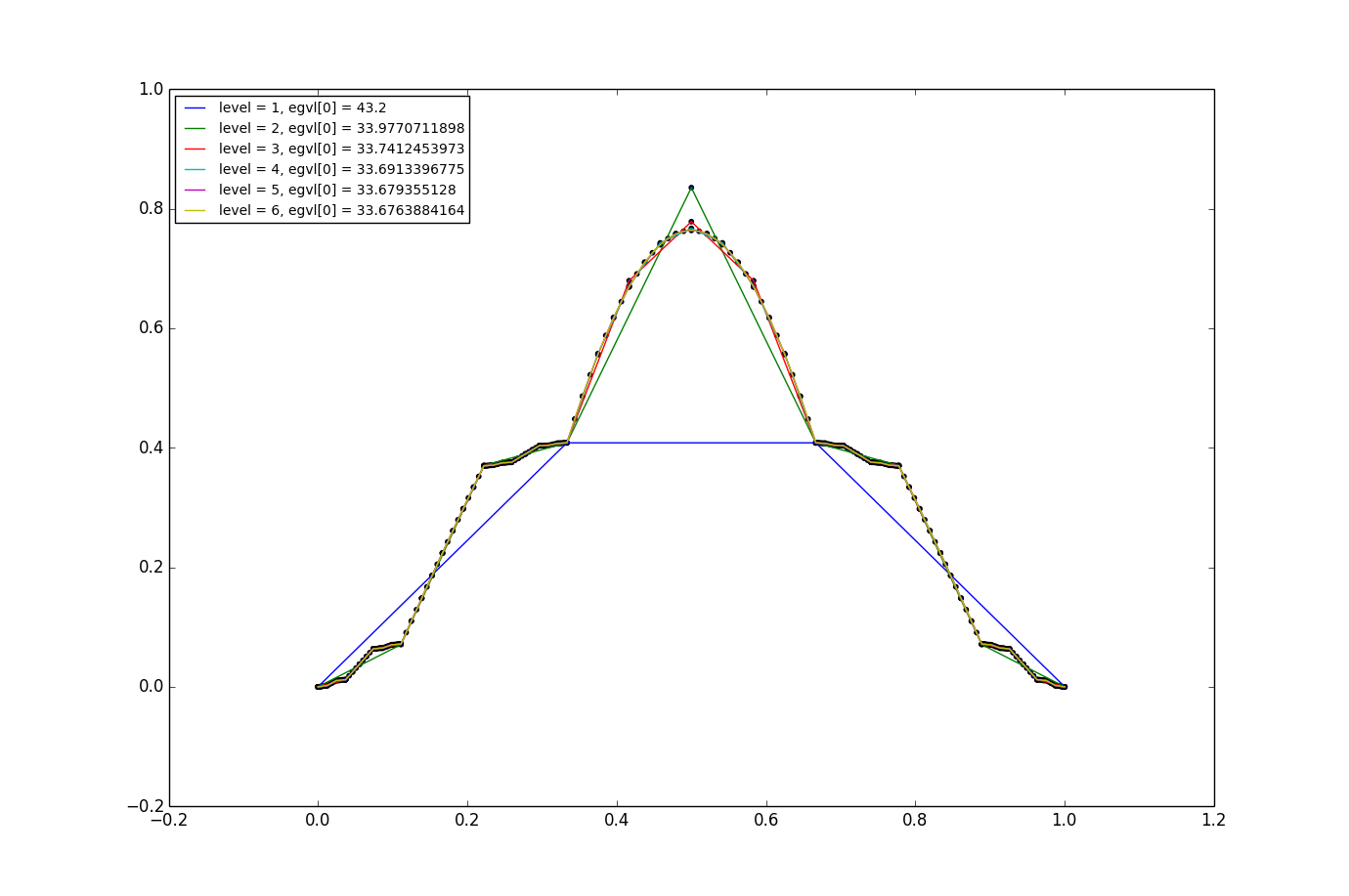}\par
\caption{{Restriction of the eigenfunction corresponding to the lowest eigenvalue to one edge, $r=a=\frac{1}{6}$, level $1-6$}}
\label{F:Efcts_discrete_Hanoi_1}
\end{figure}
\begin{figure}[H]
\includegraphics[scale=.38]{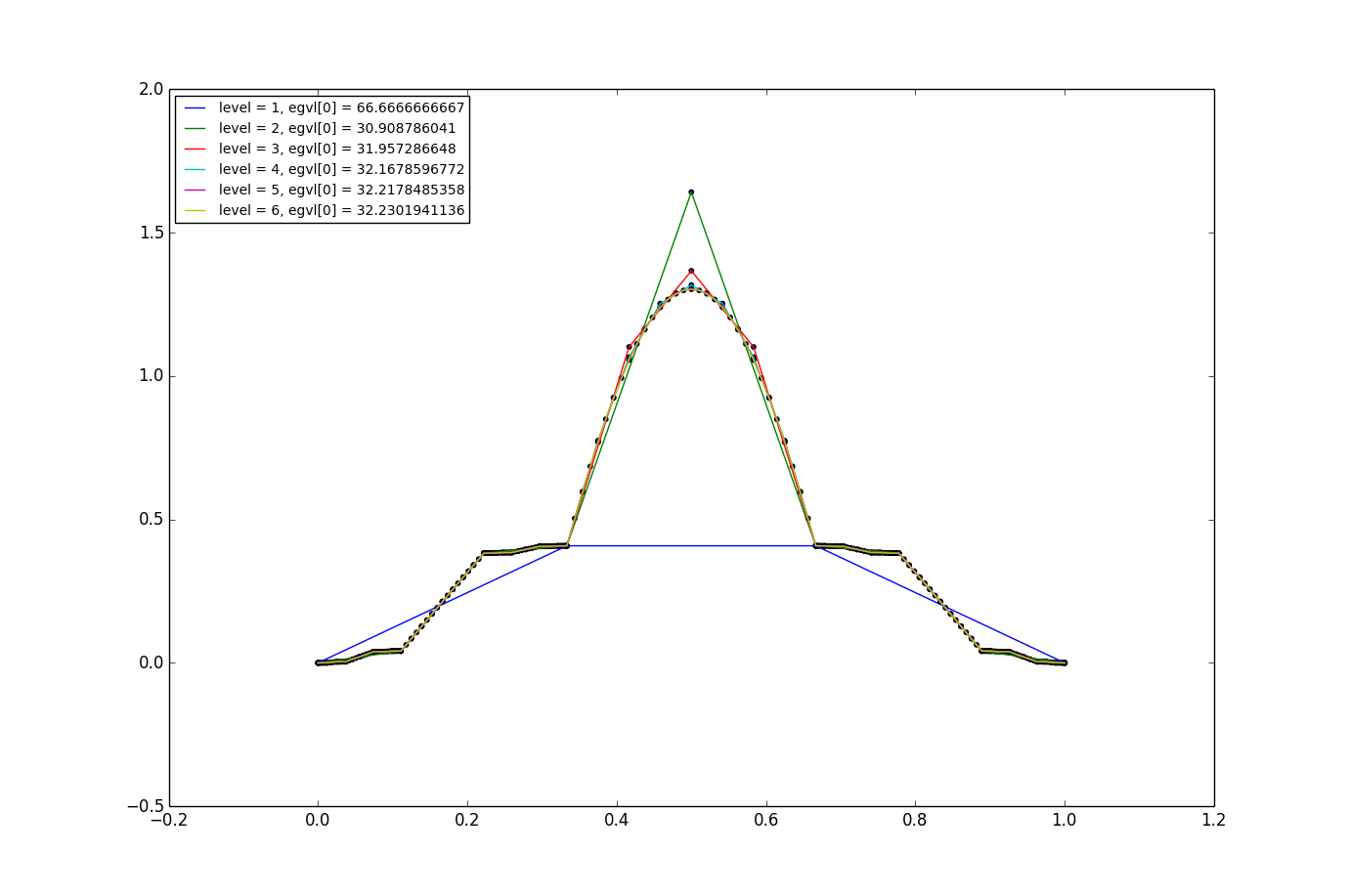}\par 
\caption{Restriction of the eigenfunction corresponding to the lowest eigenvalue to one edge, $r=a=\frac{1}{10}$, level $1-6$}
\label{F:Efcts_discrete_Hanoi_2}
\end{figure}

Although spectral decimation is not applicable in this case, we can still see that the eigenfunctions corresponding to the $k$th lowest eigenvalues of different levels share the same pattern. When the measure parameter $a$ tends to $\frac{1}{3}$ and the resistance parameter $r$ approaches $\frac{3}{5}$, eigenfunctions become more similar to eigenfunctions on $\SG$.

\begin{figure}[H]
\begin{subfigure}[b]{0.45\textwidth}
\includegraphics[width=\linewidth]{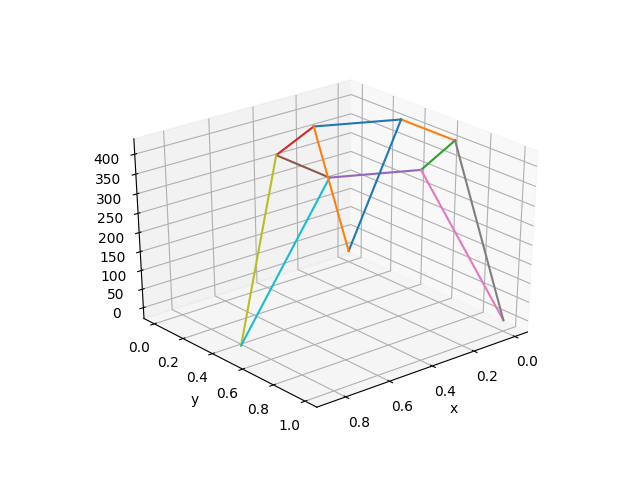}\par
\caption{{\small level=1, eigenvalue=43.2}}
\end{subfigure}
\begin{subfigure}[b]{0.45\textwidth}
\includegraphics[width=\linewidth]{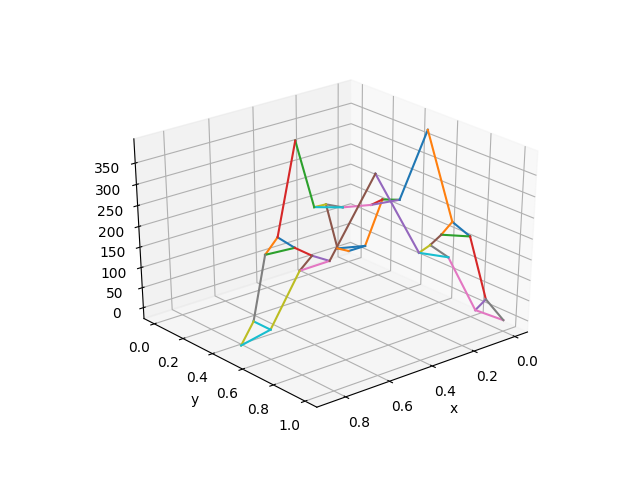}\par
\caption{{\small level=3, eigenvalue=33.977}}
\end{subfigure}
\par\bigskip
\begin{subfigure}[b]{0.45\textwidth}
\includegraphics[width=\linewidth]{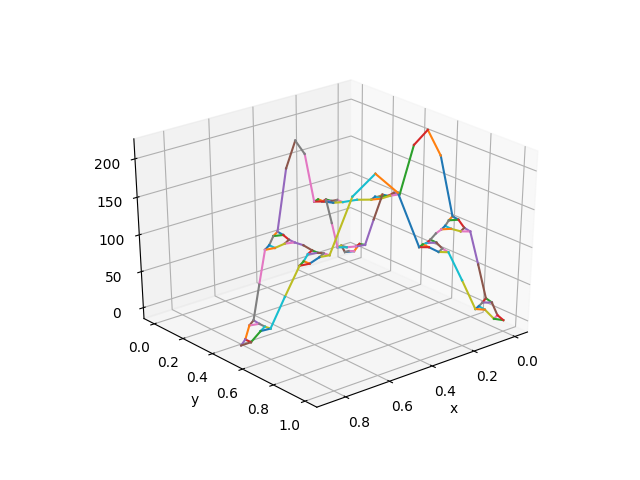}\par
\caption{{\small level=2, eigenvalue=33.741}}
\end{subfigure}
\begin{subfigure}[b]{0.45\textwidth}
\includegraphics[width=\linewidth]{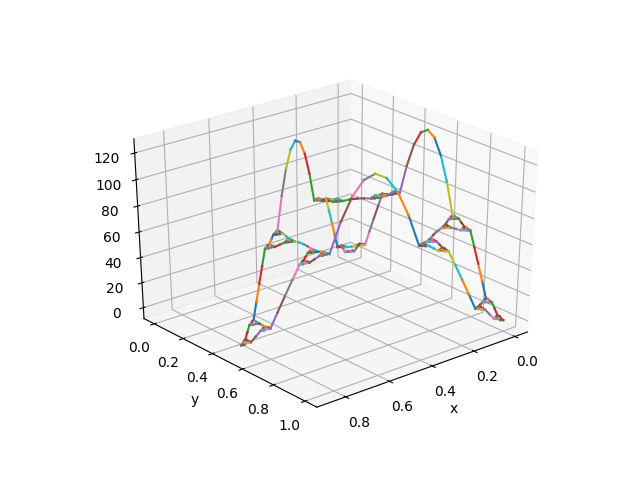}\par
\caption{{\small level=4, eigenvalue=33.691}}
\end{subfigure}
\par\bigskip
\begin{subfigure}[b]{0.45\textwidth}
\includegraphics[width=\linewidth]{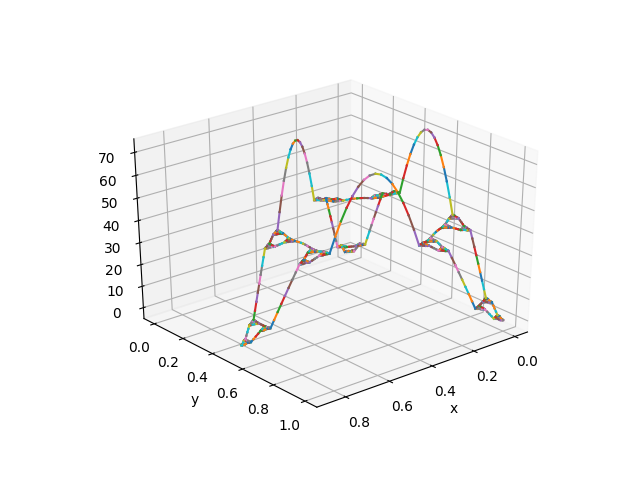}\par
\caption{{\small level=5, eigenvalue=33.679}}
\end{subfigure}
\begin{subfigure}[b]{0.45\textwidth}
\includegraphics[width=\linewidth]{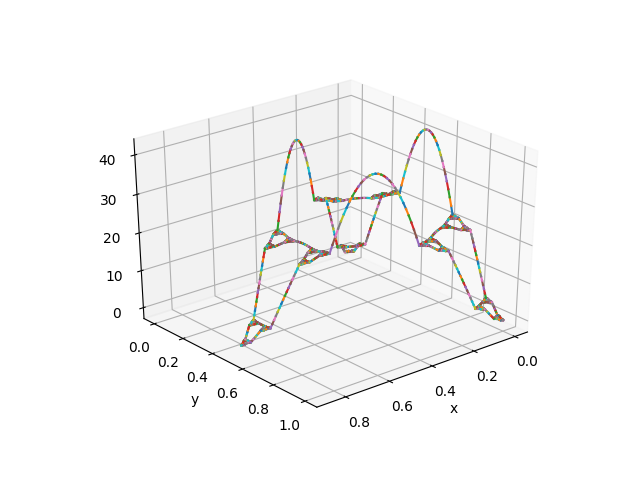}\par
\caption{{\small level=6, eigenvalue=33.676}}
\end{subfigure}
\caption{Eigenfunction corresponding to the lowest eigenvalue, $r=a=\frac{1}{6}$, level $1-6$, see~\cite{CGSZ17} for further examples.}
\label{F:Hanoi_discrete_EF}
\end{figure}


\begin{figure}[H]
\begin{subfigure}[b]{0.475\textwidth}
\includegraphics[width=\linewidth]{a_0_17_r_0_17_level_6_index_0.png}\par
\caption{{\small $a=\frac{1}{6}$, $r=\frac{1}{6}$, level=6, eigenvalue=33.676}}
\end{subfigure}
\begin{subfigure}[b]{0.475\textwidth}
\includegraphics[width=\linewidth]{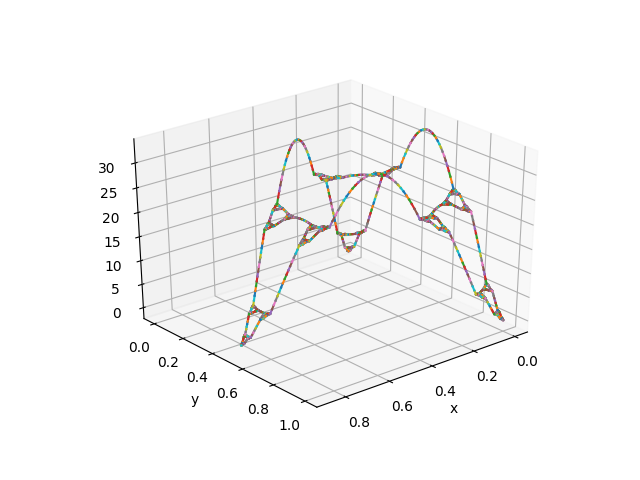}\par
\caption{{\small $a=\frac{1}{6}$, $r=\frac{1}{4}$, level=6, eigenvalue=26.142}}
\end{subfigure}
\par\bigskip
\begin{subfigure}[b]{0.475\textwidth}
\includegraphics[width=\linewidth]{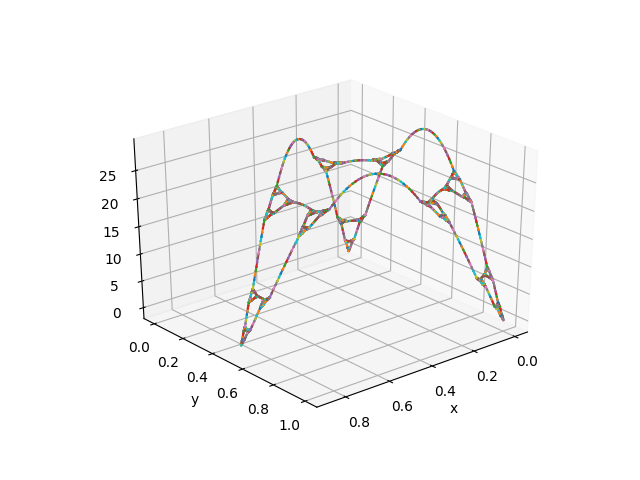}\par
\caption{{\small $a=\frac{1}{6}$, $r=\frac{1}{3}$, level=6, eigenvalue=21.245}}
\end{subfigure}
\begin{subfigure}[b]{0.475\textwidth}
\includegraphics[width=\linewidth]{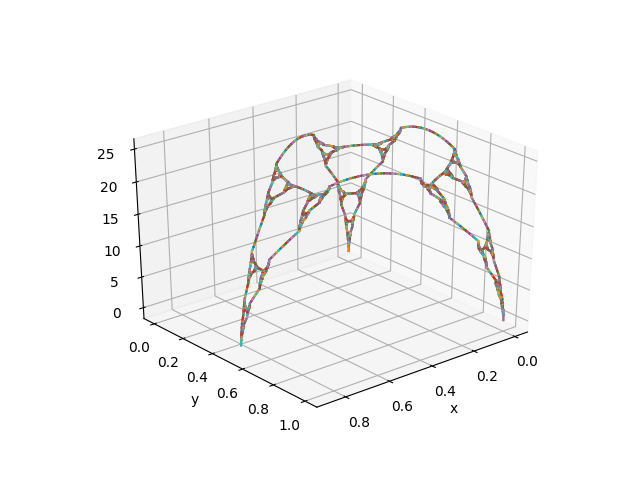}\par
\caption{{\small $a=\frac{1}{6}$, $r=\frac{1}{2}$, level=6, eigenvalue=15.395}}
\end{subfigure}
\caption{Eigenfunction corresponding to the lowest eigenvalue, increasing $r$.}
\end{figure}
\begin{figure}[H]
\begin{subfigure}[b]{0.475\textwidth}
\includegraphics[width=\linewidth]{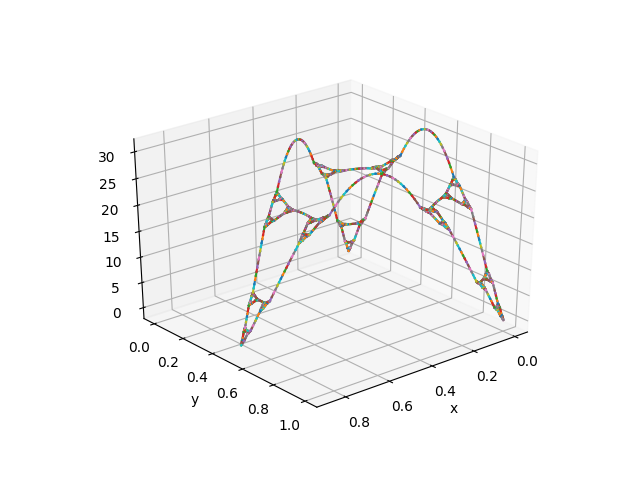}\par
\caption{{\small $a=\frac{1}{10}$, $r=\frac{1}{3}$, level=6, eigenvalue=18.181}}
\end{subfigure}
\begin{subfigure}[b]{0.475\textwidth}
\centering
\includegraphics[width=\linewidth]{a_0_17_r_0_33_level_6_index_0.png}\par
\caption{{\small $a=\frac{1}{6}$, $r=\frac{1}{3}$, level=6, eigenvalue=21.245}}
\end{subfigure}
\caption{Eigenfunction corresponding to the lowest eigenvalue, increasing $a$, see~\cite{CGSZ17} for further examples.}
\end{figure}
\begin{figure}[H]\ContinuedFloat
\par\bigskip
\begin{subfigure}[b]{0.475\textwidth}
\centering
\includegraphics[width=\linewidth]{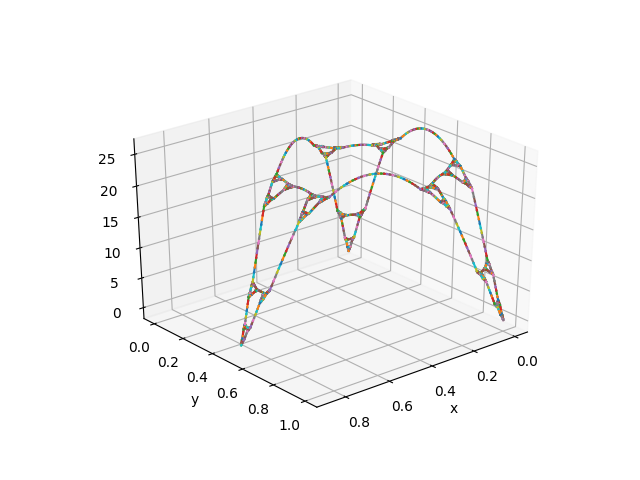}\par
\caption{{\small $a=\frac{1}{4}$, $r=\frac{1}{3}$, level=6, eigenvalue=25.746}}
\end{subfigure}
\begin{subfigure}[b]{0.475\textwidth}
\centering
\includegraphics[width=\linewidth]{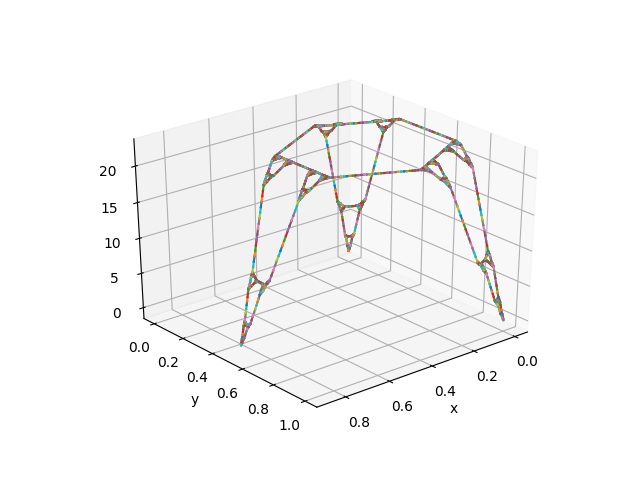}\par
\caption{{\small $a=\frac{333}{1000}$, $r=\frac{1}{3}$, level=6, eigenvalue=29.835}}
\end{subfigure}
\caption{(cont.) Eigenfunction corresponding to the lowest eigenvalue, increasing $r$, see~\cite{CGSZ17} for further examples.}
\end{figure}

\subsection{Quantum graph approach}
The Hanoi attractor $\rm H$ is also a \textit{fractal quantum graph}, a concept introduced in~\cite{ARKT16}, where the Laplacian $\Delta_\mu$ from~\eqref{E:DefDeltaMu} was approximated by quantum graphs. We refer to the appendix and~\cite{BK13} for basic notation and background. In this case, we consider approximating graphs as in Figure~\ref{F:HanoiApprox} and treat each edge as a one-dimensional interval with the standard one-dimensional Laplacian and suitable boundary conditions on each edge.

%

\subsubsection{Computational Method}
We will focus on the numerical computation of the Dirichlet spectrum. Since an eigenfunction $u$ of $-\Delta_\mu$ with eigenvalue $\lambda^2$ restricted to each edge should give a trigonometric function of frequency $\lambda$, we parametrize the restriction of $u$ on an edge $e$ of an approximating graph by
\begin{equation*}\label{E:u_e}
u(x)|_e = a_e \sin (\lambda x)+b_e \cos(\lambda x),\qquad x\in [0, L_e],
\end{equation*}
where $L_e$ is the length of the edge $e$, and set $b_e = 0$ for edges adjacent to the boundary $V_0$. 

\medskip

For any fixed $\lambda>0$ and each edge $e\in E_m$ with vertex $v\in V_m$, $u(v)|_e$ and $(u|_e)'(v)$ are linear combinations of $a_e$ and $b_e$. From the matching conditions, each vertex contributes with $\text{degree}(v)-1$ independent linear equations for $u(v)|_e$ whereas the boundary conditions provide a further equation for $(u|_e)'(v)$. This yields a homogeneous system of linear equations that can be expressed as 
\begin{equation}\label{E:LinEqLambda}
M(\lambda)(a_{e_1},a_{e_2},\ldots a_{e_{2|E_m|}},b_{e_1},b_{e_2}\ldots,b_{e_{2|E_m|}})^T=(0,0,\ldots)^T,
\end{equation}
where $|E_m|$ denotes the number of edges in $E_m$. Notice that $M(\lambda)$ is a square matrix because the number of equations in the linear system is  $\sum_{v\in V_m} \text{degree}(v)=2|E_m|$. 
Since a number $\lambda^2$ is an eigenvalue if and only if there exists a function $u$ whose parameters $a_e$ and $b_e$ solve~\eqref{E:LinEqLambda}, we search for solutions of the latter system of equations. In the case of Dirichlet eigenfunctions, one can parametrize the function $u$ at one of the two adjacent edges to a boundary vertex by sine curves, so that the number of equations at level $m$ reduces to $2|E|-3 = 3^{m+2}-9$.

\medskip
A technical issue in the computation arises 
from the fact that it is only possible to obtain the almost-nullspace decomposition of $M(\lambda)$.  In actual computations, writing $M(\lambda)= S\Sigma V^T$, 
it happens that the first few diagonal entries of $\Sigma$ are almost zero and we therefore choose the corresponding columns in $V$ as the basis for the almost-nullspace.

\subsubsection{Eigenvalue counting function}
At each level $m\geq 1$, the Laplacian associated with the approximating quantum graph, $\Delta_{Q_m}$, is a self-adjoint operator and we can again consider its eigenvalue counting function, defined analogously as~\eqref{E:Def_Ecf}. In particular we have the following asymptotic behavior of the eigenvalue counting function at any approximation level.

\begin{proposition}
Let $N_{Q_m}(x)$ denote the eigenvalue counting function of $\Delta_{Q_m}$. For each (finite) level of approximation $m\geq 1$, $N_{Q_m}(x)\sim O(\sqrt{x})$. 
In particular, if the effective resistance scaling factor $r$ is rational, $N_{Q_m}(\sqrt{x})$ is periodic.
\end{proposition}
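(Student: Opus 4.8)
The plan is to read off both assertions from the secular (characteristic) equation $\det M(\lambda)=0$ of the quantum graph $\Gamma_m=(V_m,E_m)$, with $M(\lambda)$ the matrix appearing in~\eqref{E:LinEqLambda}. For the growth bound, note that for each finite $m$ the metric graph carrying $\Delta_{Q_m}$ is compact, so $\Delta_{Q_m}$ has discrete spectrum and Weyl's law for compact quantum graphs (see e.g.~\cite{BK13}) gives $\#\{\lambda>0\ \text{eigenfrequency of }\Delta_{Q_m}:\lambda\le t\}=\frac{L_m}{\pi}t+O(1)$ as $t\to\infty$, where $L_m=\sum_{e\in E_m}L_e$ is the total edge length. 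Since every eigenvalue of $\Delta_{Q_m}$ has the form $\lambda^2$, this reads $N_{Q_m}(x)=\frac{L_m}{\pi}\sqrt{x}+O(1)$, in particular $N_{Q_m}(x)=O(\sqrt x)$; alternatively the bound follows by Dirichlet--Neumann bracketing at all vertices of $\Gamma_m$, which decouples it into the finitely many edge intervals, each with counting function $\tfrac{L_e}{\pi}\sqrt x+O(1)$.

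For the periodicity statement I would first observe that $r\in\mathbb{Q}$ makes all edge lengths of $\Gamma_m$ commensurable: by construction each $L_e$ is a product of finitely many factors, each equal to $r$, to $\rho=1-\tfrac53 r$ (using~\eqref{eq:HF09}), or to $\tfrac12$, hence is rational, so there is $\ell>0$ with $L_e\in\ell\,\mbbN$ for every $e\in E_m$; set $T:=2\pi/\ell$, so that every $\sin(\lambda L_e)$ and $\cos(\lambda L_e)$ is $T$-periodic in $\lambda$. Next, in the linear system~\eqref{E:LinEqLambda} the continuity conditions at the vertices involve only such sines and cosines, whereas each Kirchhoff condition (vanishing of the sum of outgoing derivatives) carries an overall factor $\lambda$, because $\frac{d}{dx}\bigl(a_e\sin(\lambda x)+b_e\cos(\lambda x)\bigr)=\lambda\bigl(a_e\cos(\lambda x)-b_e\sin(\lambda x)\bigr)$; the Dirichlet conditions at $V_0$ have already been absorbed by setting $b_e=0$ on the pendant edges, as in the paragraph after~\eqref{E:LinEqLambda}, which is what reduces the system to dimension $2|E_m|-3$. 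Dividing the Kirchhoff rows by $\lambda$ for $\lambda>0$ produces a matrix $\widetilde M(\lambda)$ with $\det M(\lambda)=\lambda^{p}\det\widetilde M(\lambda)$ for some $p\ge 0$, whose entries are trigonometric polynomials in $\{\lambda L_e\}_{e\in E_m}$; hence $\widetilde M(\lambda+T)=\widetilde M(\lambda)$. For $\lambda>0$ the eigenfrequencies of $\Delta_{Q_m}$ are exactly the zeros of $\det\widetilde M$, the multiplicity of the eigenvalue $\lambda^2$ equals $\dim\ker\widetilde M(\lambda)$ by self-adjointness, and $\det\widetilde M\not\equiv 0$ because the spectrum is discrete.

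It remains to deduce the shape of $N_{Q_m}$. From $\widetilde M(\lambda+T)=\widetilde M(\lambda)$, the multiset of eigenfrequencies on $(0,\infty)$, counted with multiplicity, is invariant under the shift $\lambda\mapsto\lambda+T$; consequently the number $c$ of eigenfrequencies (with multiplicity) lying in a window $(a,a+T]$ is independent of $a\ge 0$, so $N_{Q_m}\bigl((t+T)^2\bigr)=N_{Q_m}(t^2)+c$ for all $t\ge 0$, and the remainder $t\mapsto N_{Q_m}(t^2)-\tfrac{c}{T}t$ is genuinely $T$-periodic. Writing $t=\sqrt x$ for the frequency variable, this says that $N_{Q_m}$, regarded as a function of $\sqrt x$, is the sum of a linear function of $\sqrt x$ and a $T$-periodic one — the asserted periodicity of $N_{Q_m}(\sqrt x)$ — and comparison with the Weyl asymptotics forces $c/T=L_m/\pi$. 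The main obstacle is the bookkeeping in the middle step: setting up the matching conditions so that the factor $\lambda$ is extracted from exactly the derivative rows (keeping track of the Dirichlet reduction to the $(2|E_m|-3)$-dimensional system), confirming that $\dim\ker\widetilde M(\lambda)$ really is the spectral multiplicity of $\lambda^2$ and not merely a bound, and checking that the finitely many low frequencies near $\lambda=0$ do not disturb the argument — which they do not, since the shift $\lambda\mapsto\lambda+T$ already acts on all of $(0,\infty)$.
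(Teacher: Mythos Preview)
Your argument is correct and proceeds along the same lines as the paper's proof: Weyl's law for compact quantum graphs gives the $O(\sqrt{x})$ bound, and the periodicity comes from the secular determinant being a trigonometric polynomial in the commensurable edge-lengths when $r\in\mathbb{Q}$. Your treatment is in fact more careful than the paper's brief sketch --- you extract the $\lambda^{p}$ factor from the Kirchhoff rows, track multiplicities via $\dim\ker\widetilde M(\lambda)$, and make precise that ``periodic'' should be read as linear-plus-periodic in $\sqrt{x}$.
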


The first part of the proposition follows from the classical properties of finite quantum graph, see e.g.~\cite[Chapter 3]{BK13}. 
To obtain the second, notice that each edge in $E_m$ is parametrized as an interval $[0,r^k]$ for some $1\leq k\leq m$, and 
the entries of the matrix $\operatorname{det}(M(\lambda))=F(\sin(r\lambda), \cos(r\lambda),\ldots, \sin(r^m\lambda), \cos(r^m\lambda))$ are trigonometric polynomials. 
At the $m$th approximation level, the matching (continuity) conditions and boundary conditions for the derivative give rise to equations of the form 
$a_1\sin(\lambda x_1)+a_2\cos(\lambda x_2)=a_3\sin(\lambda x_3)+a_4\cos(\lambda x_4)$, where $x_i\in \{0, r, ..., r^m\}$, whose behavior is periodic if the coefficients are rational. 
This result is is supported when performing numerical root-search methods, see Figure~\ref{F:HanoiECF}.



\begin{figure}[H]
\includegraphics[width = \textwidth]{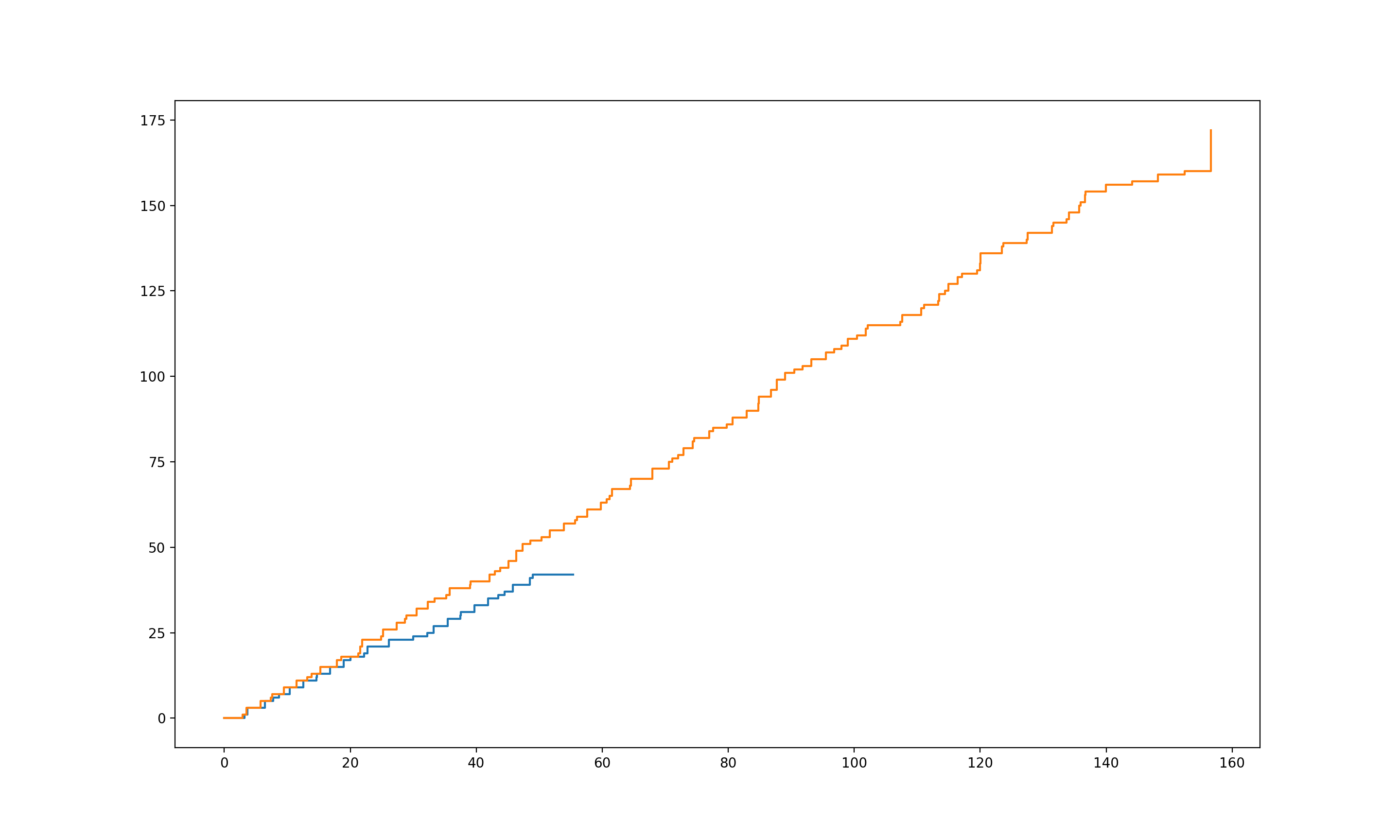}
\caption{Plot of $N_{Q_m}(\sqrt{x})$ for level 1 (blue) and level 2 (yellow) quantum hybrid SG-based graph with scaling factor $r=\frac{1}{6}$.}
\label{F:HanoiECF}
\end{figure}

\subsection{Spectrum of Hanoi attractor. Comparison of approaches}
We conclude this section with a comparison between results obtained following the discrete graph and the quantum graph approach. Figure~\ref{F:Hanoi_QG_EF} provides several eigenfunctions corresponding to the lowest eigenvalues of $\Delta_{Q_m}$ at different approximation levels. So far, only a qualitative comparison with the corresponding ones from Figure~\ref{F:Hanoi_discrete_EF} is possible. 

\begin{figure}[H]
\begin{subfigure}[b]{0.475\textwidth}
\includegraphics[width=\linewidth]{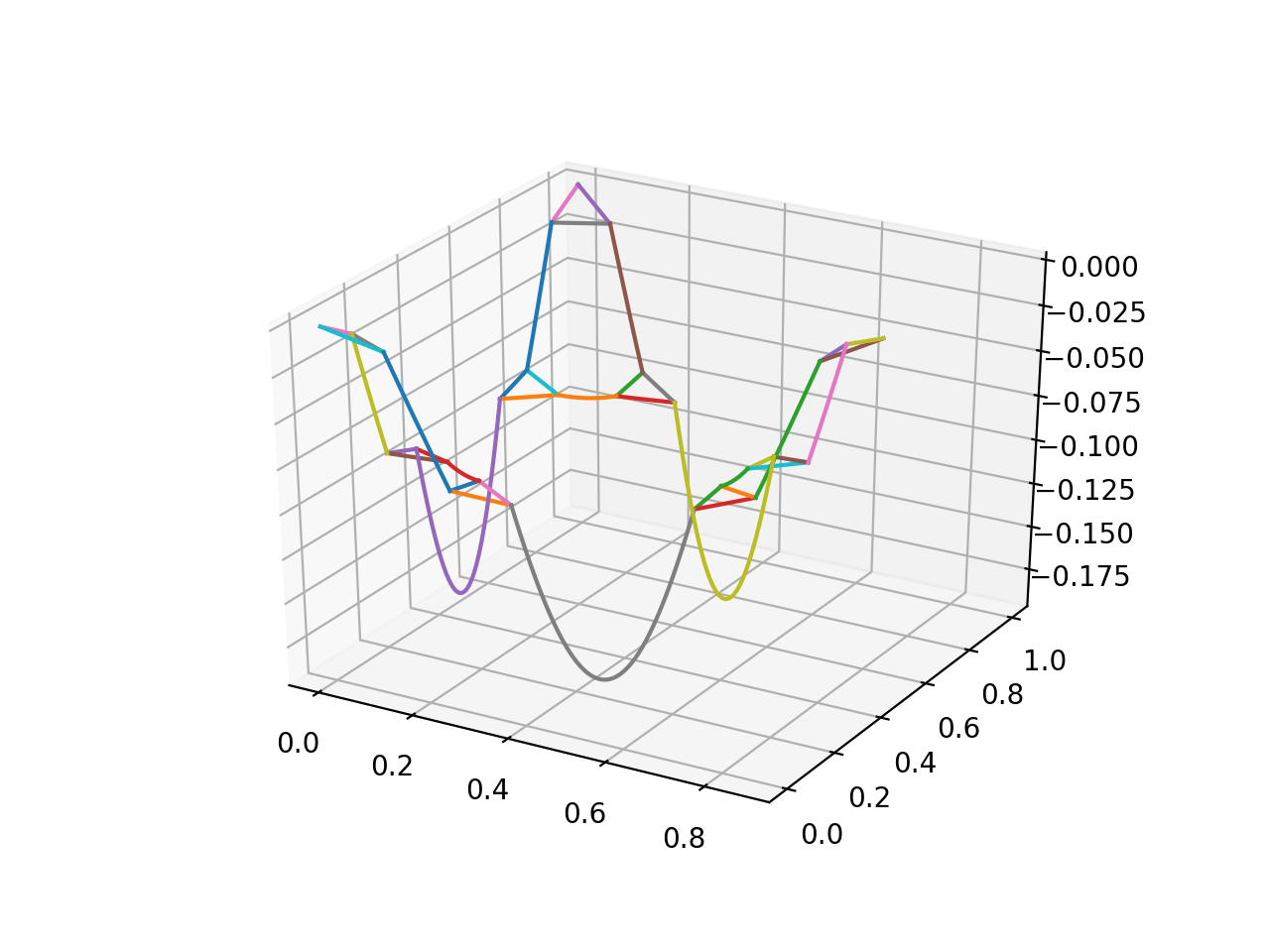}\par
\caption{{\small level=1, eigenvalue=8.58}}
\end{subfigure}
\begin{subfigure}[b]{0.475\textwidth}
\includegraphics[width=\linewidth]{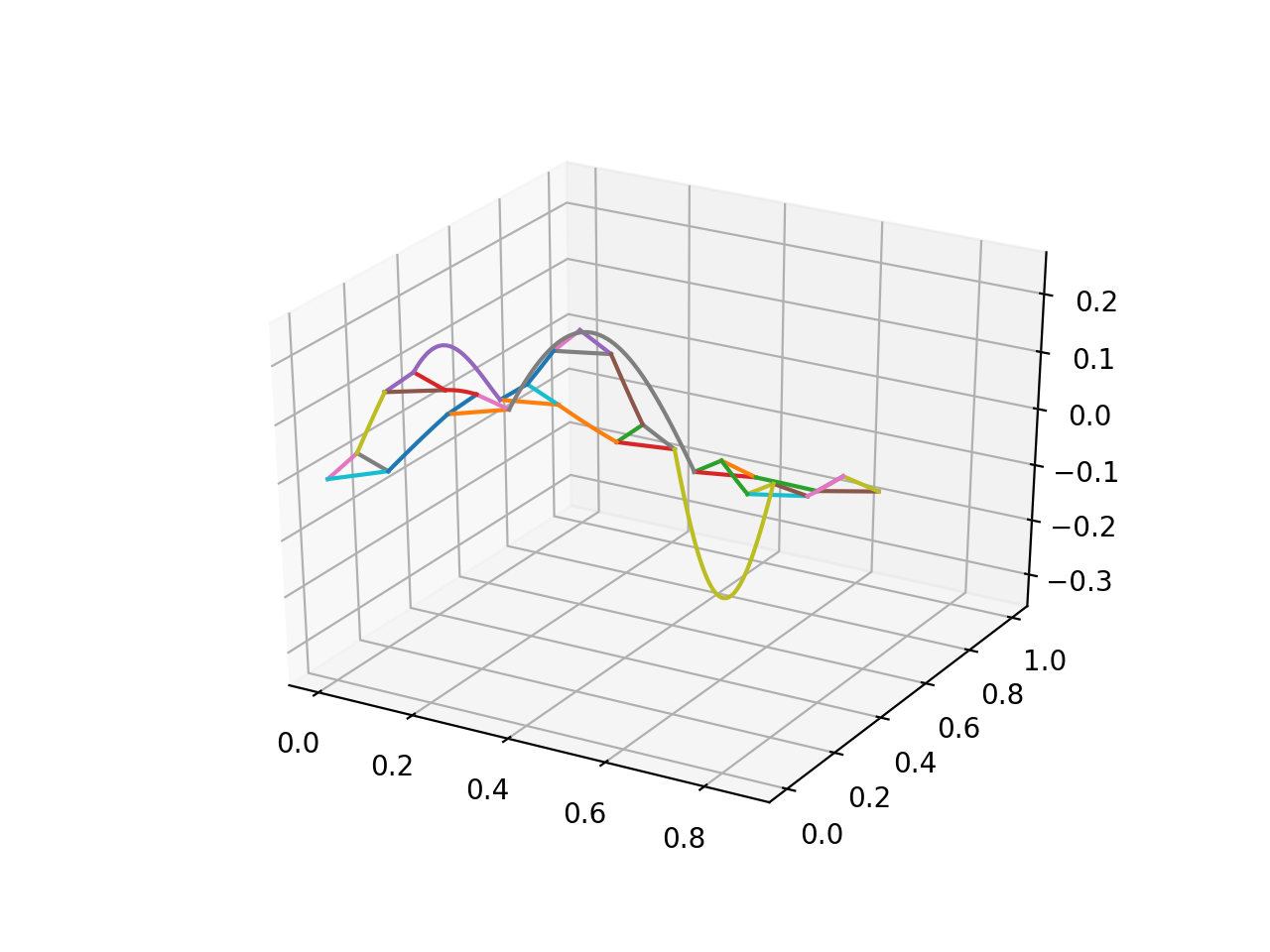}\par
\caption{{\small level=1, eigenvalue=12.27}}
\end{subfigure}
\par\bigskip
\begin{subfigure}[b]{0.475\textwidth}
\includegraphics[width=\linewidth]{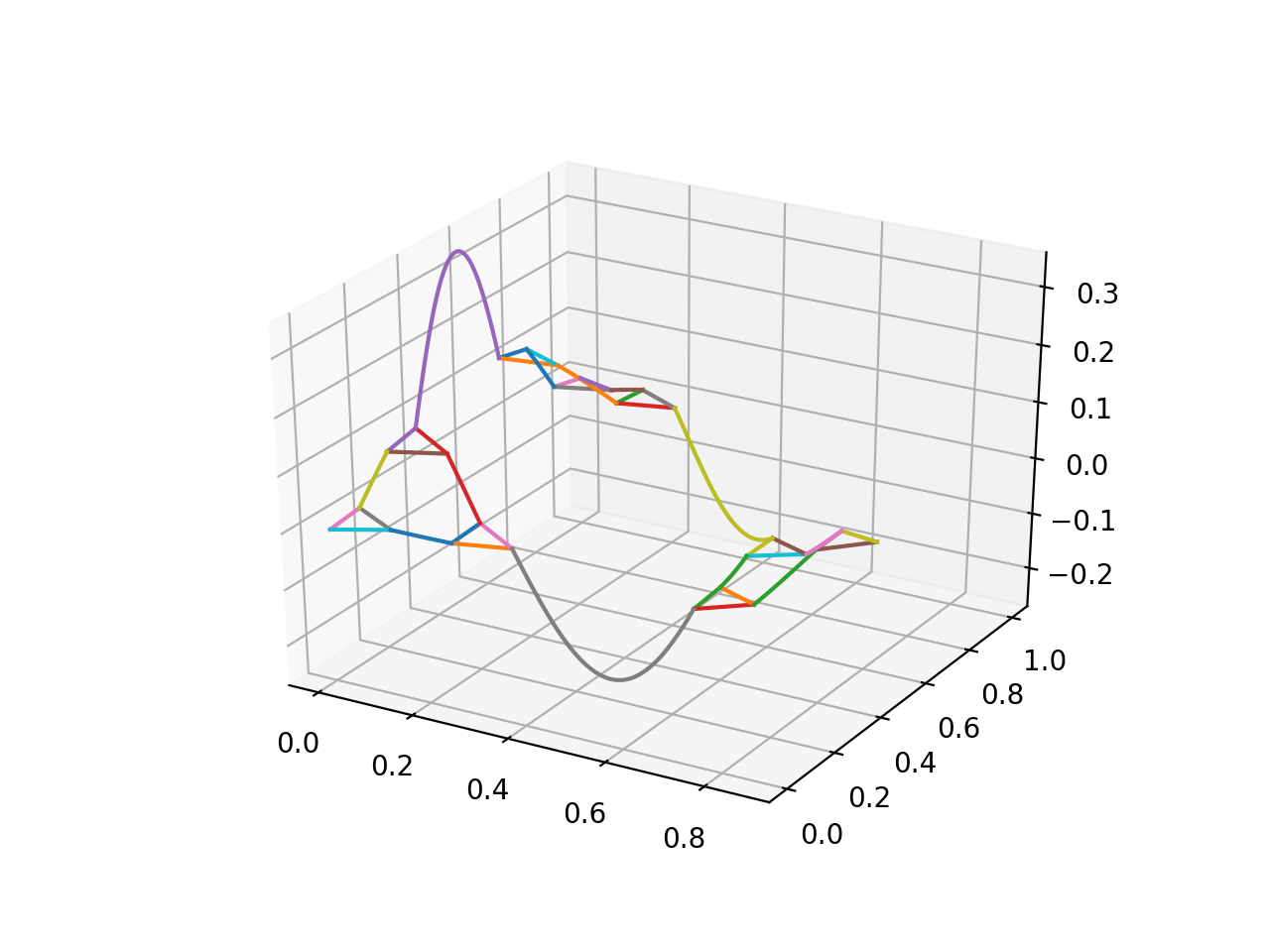}\par
\caption{{\small level=1, eigenvalue=12.27}}
\end{subfigure}
\begin{subfigure}[b]{0.475\textwidth}
\includegraphics[width=\linewidth]{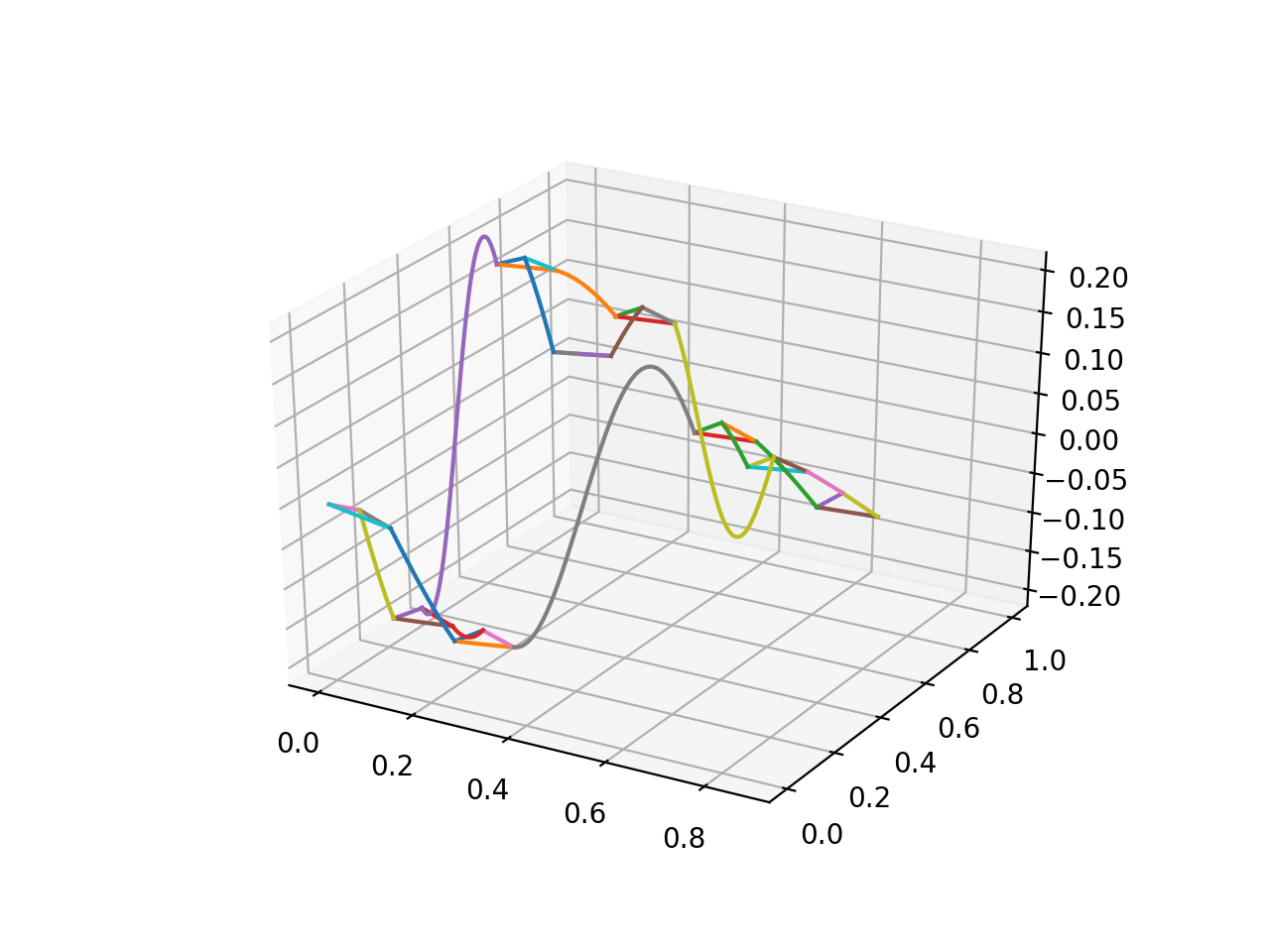}\par
\caption{{\small level=1, eigenvalue=32.95}}
\end{subfigure}
\par\bigskip
\begin{subfigure}[b]{0.475\textwidth}
\includegraphics[width=\linewidth]{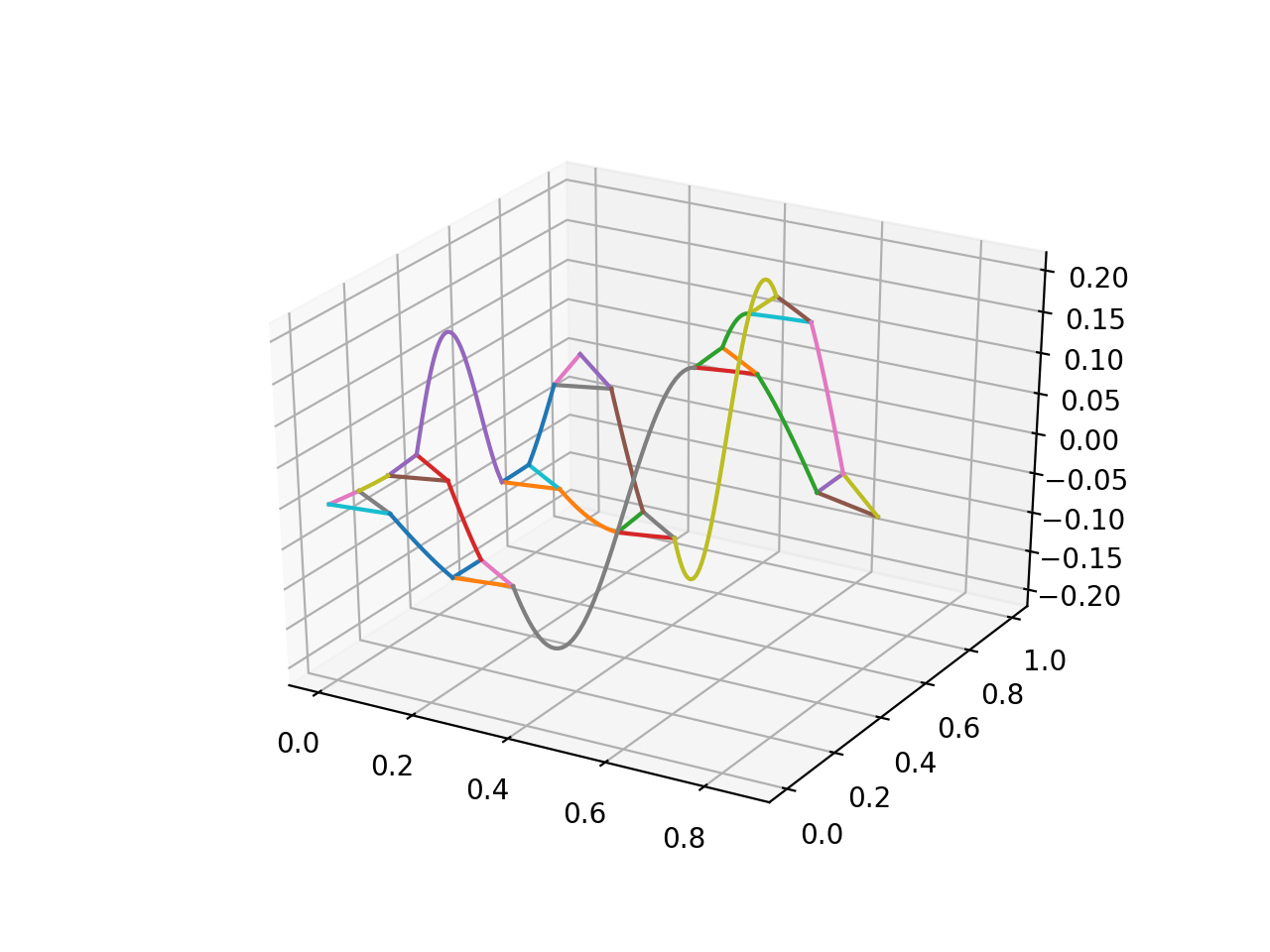}\par
\caption{{\small level=1, eigenvalue=32.95}}
\end{subfigure}
\begin{subfigure}[b]{0.475\textwidth}
\includegraphics[width=\linewidth]{q574.png}\par
\caption{{\small level=1, eigenvalue=32.95}}
\end{subfigure}
\caption{Eigenfunctions for level 0 and level 1 quantum graph approximation of $\rm H$, see~\cite{CGSZ17} for further examples.}
\label{F:Hanoi_QG_EF}
\end{figure}

In addition, the subsequent table summarizes the numerical results obtained for the bottom of the spectrum via quantum graph approximation and via discrete graph approximation for several approximation levels. 
Here, when we compare eigenvalues obtained from the two different approaches, we multiple each eigenvalue from the quantum graph approximation by a renormalization factor of $\frac{13}{3}$. This is due to the relation~\eqref{E:RelationLaplacian} between the usual second derivative on an interval and the Laplacian defined on the Hanoi attractor.
\begin{table}[H]
\centering
\renewcommand\arraystretch{1.1}
\renewcommand\tabcolsep{3pt}
\begin{tabular}{|c|c|c|c|c|c|c|c|}
\hline
\multicolumn{2}{|c|}{Level 0(Q)} & \multicolumn{2}{|c|}{Level 1(Q)} &\multicolumn{2}{|c|}{Level 2(Q)} & \multicolumn{2}{|c|}{Hanoi Attractor}\\ \hline
\small Ev. & {\small Renorm.\ ev.}&\small Ev. & {\small Renorm.\ ev.}&\small Ev. & {\small Renorm.\ ev.} & \small Ev. & \small Mult.\\ \hline
10.247   & 44.402 & 8.578 & 37.173  & 7.896  & 34.216  & \textbf{33.676}  & 1\\ \hline
13.627   & 59.051 & 12.266  & 53.153   & 11.424 & 49.506  &\textbf{49.906} & 2\\ \hline
41.306   & 178.992 & 32.951  & 142.786  & 30.030 & 130.132 & \textbf{122.399}  & 2  \\ \hline
59.750   & 258.918 & 54.613  & 236.657  & 51.955  & 225.139  &\textbf{208.308} & 1  \\ \hline 
75.686   & 327.975 & 57.438  & 248.897  & 52.592  & 227.897 &\textbf{220.949}  & 1  \\ \hline
107.259  & 464.788 & 89.685  & 388.635  & 83.999   & 363.995 &\textbf{345.050}  & 2  \\ \hline
156.406  & 677.761 & 132.033 & 572.143  & 122.324  & 530.069 & \textbf{502.813}  & 2  \\ \hline 
213.693  & 926.002 & 172.604 & 747.953  & 156.876  & 679.794 & \textbf{625.255}  & 1  \\ \hline
217.180  & 941.113 & 192.661 & 834.863  & 186.323  & 807.398 & \textbf{790.386}  & 1  \\ \hline 
280.562  & 1215.767 & 232.571 & 1007.807  & 218.448  & 946.610 & \textbf{895.853} & 2  \\ \hline 
358.903  & 1555.247 & 320.370 & 1388.268  & & & \textbf{1320.110} & 2 \\ \hline
400.372  & 1734.945 & 343.876 & 1490.131  & & & \textbf{1392.494} & 1  \\ \hline 
\end{tabular}
\caption{Bottom of spectrum for quantum graph compared to the spectrum of the discrete level 6 graph approximation of the Hanoi attractor.\\ Ev.= eigenvalue, Renorm. ev. = renormalized eigenvalue, Mult. = multiplicity.}
\end{table}
\begin{section}{Spectrum of the hybrid with base ${\rm SG}3$}\label{sec:SpcSG3}
In the present section, we carry out a similar spectral analysis on the hybrid fractal with base ${\rm SG}_3$ introduced in Definition~\ref{def:SG3.01} and denoted by ${\rm H}$. Again and for simplicity we consider a resistance form as in Section~\ref{section:SG3} whose resistance parameters are $R=1$, $r_{{\rm H}}=r$ and $r_{{\rm I}}=r_{{\rm SG}}=\rho$, c.f.\ Definition~\ref{def:SG3.02}. In particular, we know from Lemma~\ref{lemma:SG3.01} that the corresponding energy is graph-directed self-similar.

\subsection{Discrete graph approach}\label{sec:SpcSG3.01}
Following Subsection~\ref{section:SG3.1}, we consider the approximating graphs $\Gamma_m=(V_m,E_m,r_m)$ and define $V_*=\cup_{m\geq 0}V_m$.

\medskip

In view of Theorem~\ref{thm:SG3.01}, an associated resistance form $(\mathcal E,\mathcal F)$ exists given that the renormalization equation
\begin{equation*}
5\rho^2+5r^2+\frac{31}{3}\rho r-3\rho-\frac{7}{3}r=0
\end{equation*}
holds, see Figure~\ref{fig:res}.

\begin{figure}[H]
\centering
\begin{subfigure}{.45\textwidth}
\centering
\includegraphics[trim={0 0 0 -3em}, width=.8\linewidth]{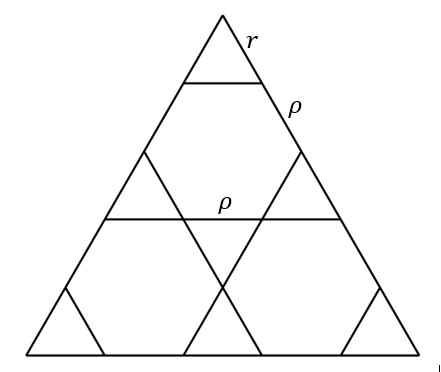}
\caption{Level 1 resistance}
\end{subfigure}%
\begin{subfigure}{.45\textwidth}
\centering
\includegraphics[trim={0 0 0 1em},clip,width=0.8\linewidth]{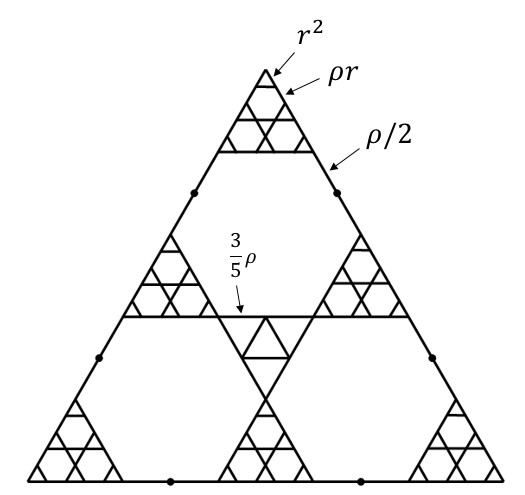}
\caption{Level 2 resistance}
\end{subfigure}
\caption{Contraction property of the resistance.}
\label{fig:res}
\end{figure}

Further, we equip ${\rm H}$ with a weakly self-similar measure measure with parameters $a$, $b$, $c$ as described in Figure~\ref{fig:ms}, so that the measure parameters satisfy
\begin{equation*}
6a+6b+c=1.
\end{equation*}

\begin{figure}[H]
\centering
\begin{subfigure}{.45\textwidth}
\centering
\includegraphics[trim={0 0 0 -.5em}, width=.8\linewidth]{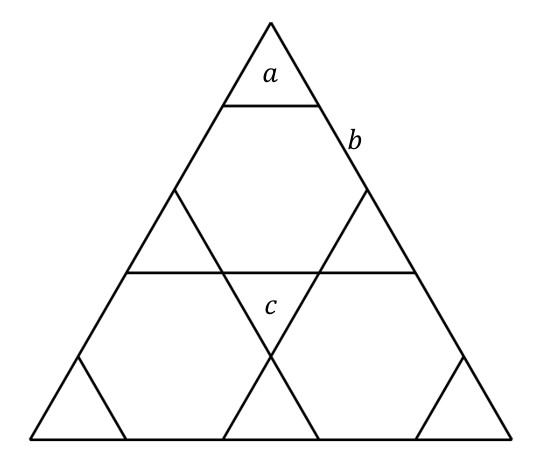}
\caption{Level 1 measure}
\end{subfigure}%
\begin{subfigure}{.45\textwidth}
\centering
\includegraphics[trim={0 0 0 .75em},clip,width=.8\linewidth]{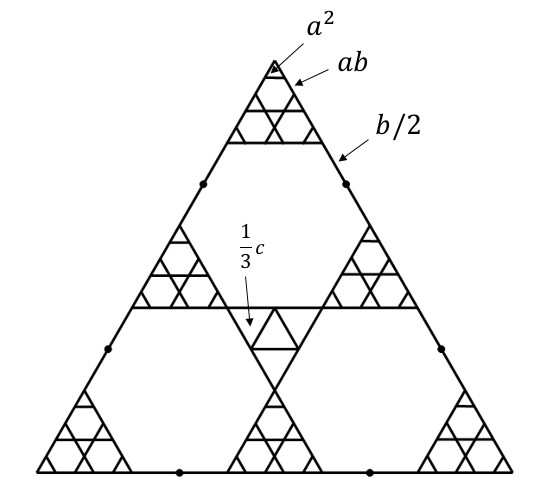}
\caption{Level 2 measure}
\end{subfigure}
\caption{Contraction property of measure}
\label{fig:ms}
\end{figure}



Our aim is to study the Laplace operator $\Delta_\mu$ defined through the weak formulation~\ref{E:DefDeltaMu} by means of the corresponding discrete Laplacian $\Delta_m$ from Definition~\eqref{E:DefLaplacian_m}. In this case, any point $x\in V_m \setminus V_0$, $x$ can be either an endpoint or an internal point of a unique line segment, first born in $V_k$ for some $k\leq m$, denoted by $I^{(m,k)}_{x}$, or a boundary point or an internal point of a unique reversed triangle, first born in $V_k$ for some $k\leq m$, denoted by $J^{(m,k)}_{x}$.
It can be shown that
\begin{equation*}
\int_K\psi_{x}^{(m)}=\begin{cases}
\frac{1}{3}a^{m}+(\frac{1}{2})^{m-k+1}a^{k-1}b&\text{if $x$ is an endpoint of } I^{(m,k)}_{x},\\
(\frac{1}{2})^{m-k}a^{k-1}b&\text{if $x$ is an internal point of } I^{(m,k)}_{x},\\
\frac{1}{3}a^{m}+(\frac{1}{3})^{m-k+1}a^{k-1}c&\text{if $x$ is a boundary point of $J^{(m,k)}_{x}$},\\
2(\frac{1}{3})^{m-k+1}a^{k-1}c&\text{if $x$ is an interior point of $J^{(m,k)}_{x}$}  
\end{cases}
\end{equation*}

One more observation can be made is that if $x$ is an internal point of $I^{(m,k)}_{x}$, then
\begin{equation*}
\Delta_m u(x)=\frac{u(y_0)+u(y_1)-2u(x)}{\left(a^{k-1}b(\frac{1}{2})^{m-k}\right)\left((\frac{1}{2})^{m-k}r^{k-1}\rho\right)}
\end{equation*}
where $y_1\,\ y_2$ are 2 adjacent points of $x$ in $I^{(m,k)}_{x}$.
Compared to the usual second derivative on the interval, we will get
\begin{equation*}
u^{\prime\prime}(x)=\lim_{m\to\infty}\frac{u(y_0)+u(y_1)-2u(x)}{((\frac{1}{2})^{m-k}r^{k-1}\rho)^2}=\Big(\frac{r}{a}\Big)^{k-1}\frac{b}{\rho}\Delta_{\mu}u(x)
\end{equation*}
Again, solving $-\Delta_{\mu}u=\lambda u$ on ${\rm H}$ will yield trigonometric functions on each interval.


\subsection{Eigenfunctions}
Following the same numerical computation method from the previous section, we can compute the eigenvalues and eigenfunctions of $\Delta_m$. Figure~\ref{fig:SG3egf} displays, on the left, the eigenfunction corresponding to the lowest (Dirichlet) eigenvalue on level 4, with parameters $a=r=\frac{1}{12}, b=\frac{1}{13}$. On the right side, the same function is plotted now restricted to the middle reversed triangle. Notice that the graph of the function resembles the eigenfunction corresponding to the lowest eigenvalue on the ordinary Sierpinski Gasket.

\begin{figure}[H]
\centering
\begin{subfigure}{.5\textwidth}
\centering  
\includegraphics[trim={2cm 1cm 2cm 2.2cm},clip,width=\linewidth]{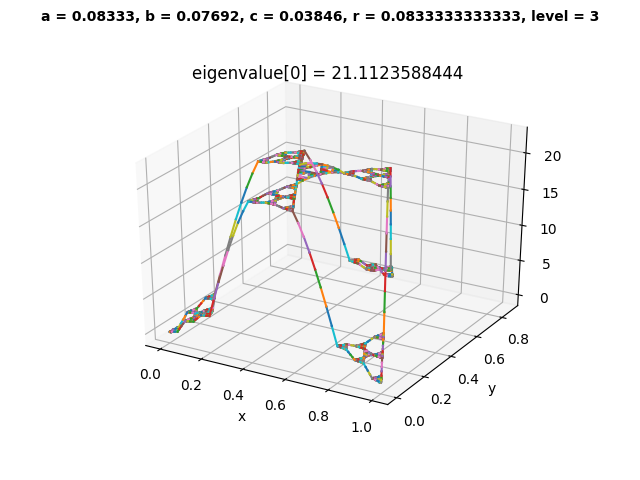}
\end{subfigure}%
\begin{subfigure}{.5\textwidth}
\centering
\includegraphics[trim={2cm 1cm 2cm 2.2cm},clip,width=\linewidth]{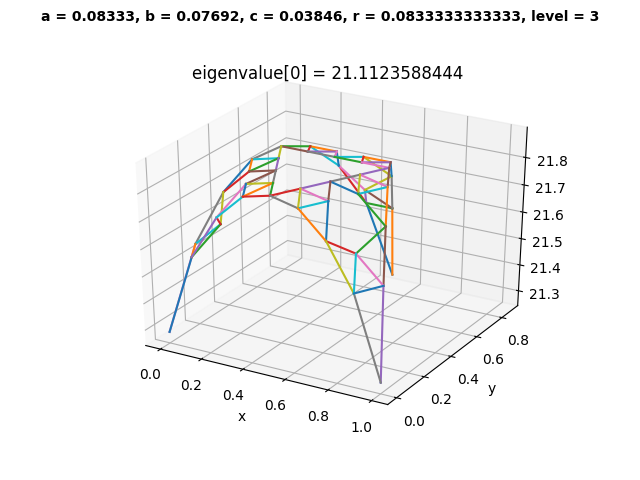}
\end{subfigure}
\caption{Level 4 eigenfunction, see~\cite{CGSZ17} for further examples.}
\label{fig:SG3egf}
\end{figure}

\subsection{Spectrum and eigenvalue counting function}
In this section, we plot the eigenvalue counting function and the corresponding log-log plot with respect to different choices of parameters $a, b$ and $r$. We refer to the reader to the website~\cite{CGSZ17} to generate more data.

\begin{figure}[H]
\centering
\begin{subfigure}{.4\textwidth}
\centering
\includegraphics[trim={0.5cm 0cm 1.58cm 1cm},clip,width=.9\linewidth]{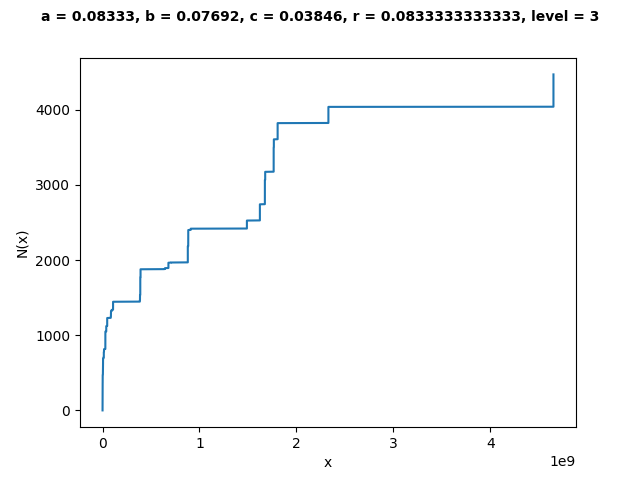}
\caption{{\small $a=r=\frac{1}{12},b=\frac{1}{13}$, level 3.}}
\end{subfigure}
\begin{subfigure}{.4\textwidth}
\centering
\includegraphics[trim={0.8cm 0cm 1.6cm 0.8cm},clip,width=.9\linewidth]{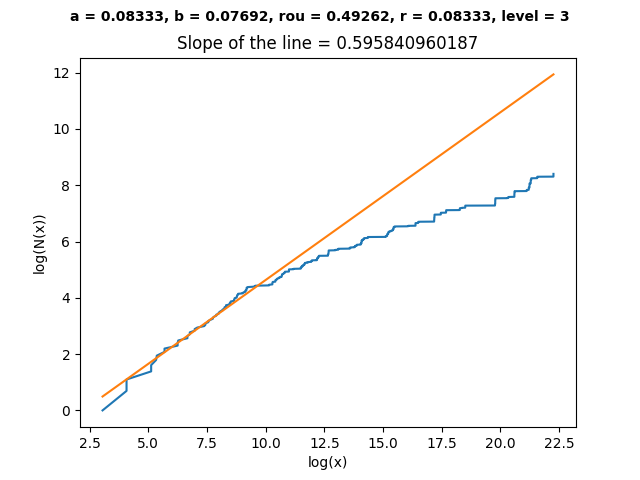}
\caption{$N(x) \sim x^{0.595841}$}
\end{subfigure}
\par\bigskip
\begin{subfigure}{.4\textwidth}
\centering
\includegraphics[trim={0.5cm 0cm 1.58cm 1cm},clip,width=0.9\linewidth]{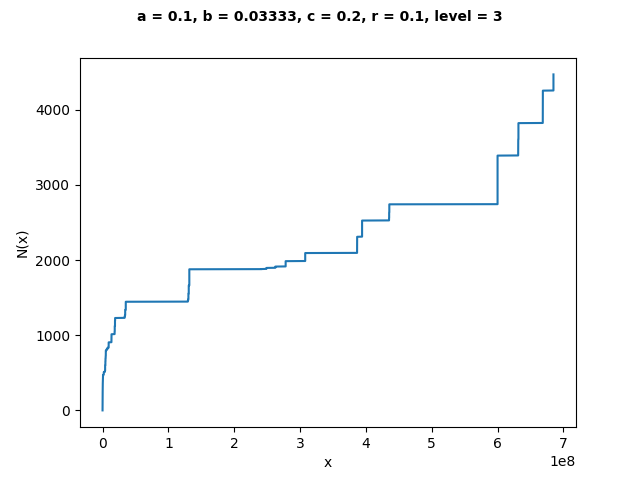}
\caption{{\small $a=r=\frac{1}{10},b=\frac{1}{30}$, level 3.}}
\end{subfigure}
\begin{subfigure}{.4\textwidth}
\centering
\includegraphics[trim={0.8cm 0cm 1.6cm 0.8cm},clip,width=0.9\linewidth]{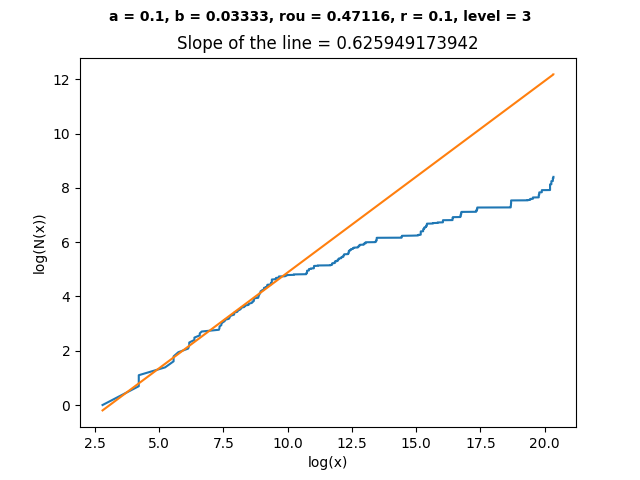}
\caption{$N(x) \sim x^{0.625949}$}
\end{subfigure}
\par\bigskip
\begin{subfigure}{.4\textwidth}
\centering
\includegraphics[trim={0.5cm 0cm 1.58cm 1cm},clip,width=.9\linewidth]{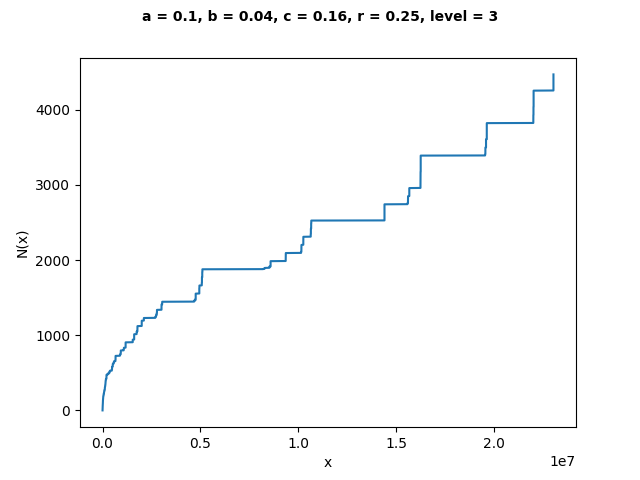}
\caption{{\small $a=\frac{1}{10},b=\frac{1}{25}, r=\frac{1}{4}$, level 3.}}
\end{subfigure}
\begin{subfigure}{.4\textwidth}
\centering
\includegraphics[trim={0.8cm 0cm 1.6cm 0.8cm},clip,width=.9\linewidth]{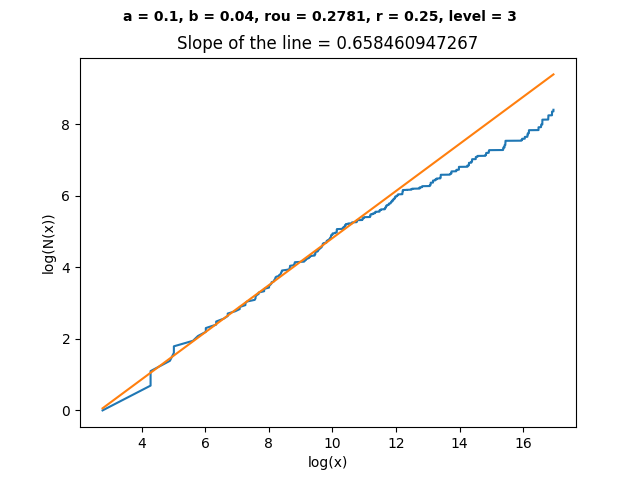}
\caption{$N(x) \sim x^{0.658461}$}
\end{subfigure}
\par\bigskip
\begin{subfigure}{.4\textwidth}
\centering
\includegraphics[trim={0.5cm 0cm 1.58cm 1cm},clip,width=.9\linewidth]{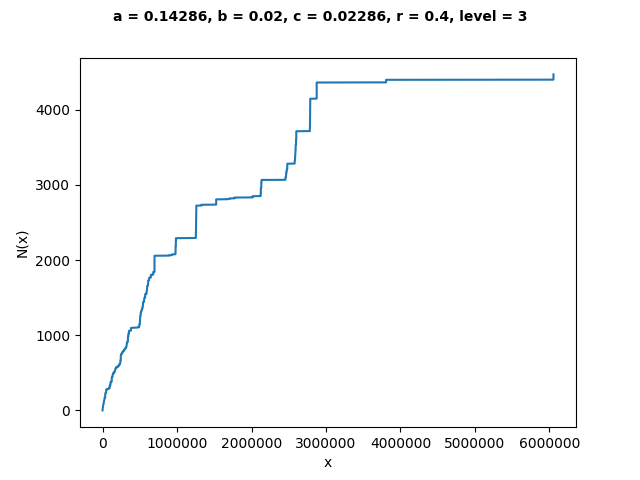}
\caption{{\small$a=\frac{1}{7},b=\frac{1}{50}，r=\frac{2}{5}$, level 3.}}
\end{subfigure}
\begin{subfigure}{.4\textwidth}
\centering
\includegraphics[trim={0.8cm 0cm 1.6cm 0.8cm},clip,width=.9\linewidth]{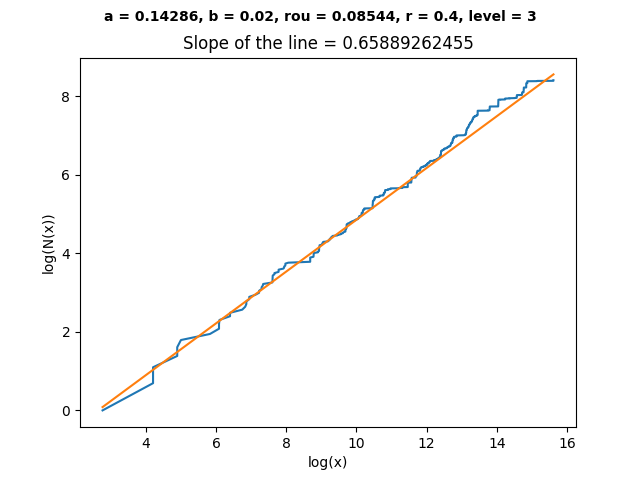}
\caption{$N(x) \sim x^{0.658893}$}
\end{subfigure}
\end{figure}
\end{section}

\subsection{Spectral asymptotics}
In this paragraph, we investigate the asymptotic behavior of the eigenvalue counting function of the Laplacian $\Delta_\mu$ and a related counting function for eigenvalues whose eigenfunctions are supported on the inverted $\SG$s. As mentioned at the beginning of the present section, $\Delta_\mu$ has an associated energy form that is graph-directed self-similar, c.f.~\ref{lemma:SG3.01}. Viewing the hybrid fractal $\rm{H}$ as the graph-directed fractal depicted in Figure~\ref{fig:SpcSG3.01} will allow us to apply the results in~\cite{HN03} in order to provide its spectral asymptotics.

\medskip

Following the notation in~\cite{HN03} and Figure~\ref{fig:HF02}, the directed graph $(\mathcal{S},E)$ that corresponds to ${\rm H}$ has vertices $\mathcal{S}=\{J_1,J_2,J_3\}$, where $J_1=\blacktriangle$ $J_2=-$, $J_3=\blacktriangledown$ and $18$ edges, of which $6$ are loops in $J_1$, $2$ loops in $J_2$ and $3$ loops in $J_3$, see Figure~\ref{fig:SpcSG3.01}. The graph $(\mathcal{S},E)$ is not connected and each vertex is a strongly connected component in the sense of~\cite[Section 3]{HN03}. There are no other strongly connected components.

\begin{figure}[H]
\centering
\begin{tikzpicture}
\draw ($(0:0)$) node (J1) {$J_1=\blacktriangle$};
\draw ($(45:3)$) node (J2) {$J_2=-$};
\draw ($(0:4)$) node (J3) {$J_3=\blacktriangledown$};
\draw[->] (J1.90) -- (J2) node[midway, left] {$6$ of this};
\draw[->] (J1) -- (J3) node[midway, above] {$1$ of this};
\path (J1) edge [loop left] node {$6$ of this} (J1);
\path (J2) edge [loop right] node {$2$ of this} (J2);
\path (J3) edge [loop right] node {$3$ of this} (J3);
\end{tikzpicture}
\caption{Directed graph associated with the hybrid $\rm H$ with base $\SG_3$ from Section~\ref{section:SG3}.}
\label{fig:SpcSG3.01}
\end{figure}
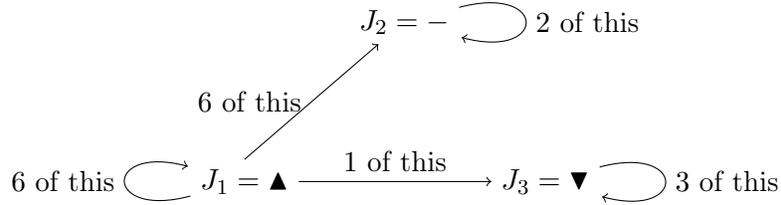

Setting $E_{ij}:=\{e\text{ edge from }J_i\text{ to }J_j\}$ and $E_i:=\{e\in E_{ij}, J_j\in \mathcal{S}\}$, the resistance parameter $r_e$ of an edge $e\in E$ is given by the corresponding resistance scaling factor described in Figure~\ref{fig:SG3.03}, i.e.
\begin{equation}\label{E:SpcSG3.01}
r_e=\begin{cases}
r&\text{if }e\in E_{11},\\
\rho&\text{if }e\in E_{12}\cup E_{13},\\
1/2&\text{if }e\in E_2,\\
3/5&\text{if }e\in E_3.
\end{cases}
\end{equation}
The weakly self-simliar measure introduced in Subsection~\ref{sec:SpcSG3.01} provides the measure parameter $\mu_e'$ of each edge $e\in E$,
\begin{equation}\label{E:SpcSG3.02}
\mu'_e=\begin{cases}
a&\text{if }e\in E_{11},\\
b&\text{if }e\in E_{12},\\
c&\text{if }e\in E_{13},\\
1/2&\text{if }e\in E_2,\\
1/3&\text{if }e\in E_3.
\end{cases}
\end{equation}
In view of the above, by~\cite[Theorem 5.6]{HN03} the spectral dimension of ${\rm H}$ is given by
\begin{equation*}
d_S^\mu=\max_{J\in\mathcal{S}} d_S^\mu(J)=\max\Big\{\frac{2\log 6}{-\log(ra)},1,\frac{2\log 3}{\log 5}\Big\}.
\end{equation*}


More precisely,~\cite[Theorem 5.3]{HN03} yields the following spectral asymptotics for the hybrid ${\rm H}$.

\begin{theorem}\label{T:SpcSG3}
Let $N(x)$ denote the (Neumann) eigenvalue counting function of the Laplacian $\Delta_\mu$ on the hybrid ${\rm H}$ with base $\SG_3$, resistance parameters $r,\rho$ and measure parameters $a,b,c$. For $x$ large we have
\begin{enumerate}[leftmargin=.3in]
\item if $0<ra<\frac{1}{36}$, then 
\begin{equation*}
N(x)\sim x^{-\frac{\log 3}{\log 5}}G(\log x),
\end{equation*}
\item if $ra=\frac{1}{36}$, then
\begin{equation*}
N(x)\sim x^{\frac{\log 3}{\log 5}}\log x,
\end{equation*}
\item if $\frac{1}{36}<ra<\frac{1}{9}$, then
\begin{equation*}
N(x)\sim x^{\frac{\log 6}{-\log (ra)}}G(\log x),
\end{equation*}
where $G$ is a periodic function.
\end{enumerate}
\end{theorem}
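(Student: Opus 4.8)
The plan is to exploit the graph-directed self-similar structure of $\rm H$ and reduce the statement to the general spectral asymptotics for finitely ramified graph-directed fractals in~\cite{HN03}. The first step is to verify that $({\rm H},\Delta_\mu)$ fits that framework. The directed graph $(\mathcal S,E)$ of Figure~\ref{fig:SpcSG3.01} encodes the substitution rule (one triangular cell produces six triangular subcells, six intervals and one inverted $\SG$; an interval produces two intervals; an inverted $\SG$ produces three inverted $\SG$'s), the resistance weights are those of~\eqref{E:SpcSG3.01} and the measure weights those of~\eqref{E:SpcSG3.02}. Finite ramification is Remark~\ref{rem:SG3.01}, and by Lemma~\ref{lemma:SG3.01} (in the parametrization $R=1$, $r_{\rm H}=r$, $r_{\rm I}=r_{\rm SG}=\rho$ subject to the renormalization equation displayed above) the energy $(\mathcal E,\mathcal F)$ is graph-directed self-similar; together with the graph-directed self-similarity of the weakly self-similar measure $\mu$, this provides exactly the data needed to identify $\Delta_\mu$ with the graph-directed self-similar Laplacian of~\cite[Section 5]{HN03}.

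Next I would compute the spectral exponents of the three strongly connected components. Since $(\mathcal S,E)$ has no within-component edges other than loops, the Perron--Frobenius argument of~\cite{HN03} gives $d_S^\mu(J_i)=2s_i$, where $s_i$ is the unique solution of $\sum_{e\in E_{ii}}(r_e\mu'_e)^{s}=1$: for $J_1$ this reads $6(ra)^{s}=1$, so $s_1=\log 6/(-\log(ra))$; for $J_2$, $2(\tfrac14)^{s}=1$, so $s_2=\tfrac12$; for $J_3$, $3(\tfrac15)^{s}=1$, so $s_3=\log 3/\log 5$. This reproduces $d_S^\mu=\max\{2\log 6/(-\log(ra)),\,1,\,2\log 3/\log 5\}$ by~\cite[Theorem 5.6]{HN03}.

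With the exponents in hand, the last step is to apply~\cite[Theorem 5.3]{HN03}. The whole fractal is a cell of type $J_1$, and $J_1$ lies upstream of both $J_2$ and $J_3$, so by Dirichlet--Neumann bracketing along the cell filtration the Neumann counting function $N(x)$ satisfies a renewal equation whose forcing term collects, scale by scale, the known counting functions of the interval cells ($\sim\sqrt{\,\cdot\,}$) and of the inverted $\SG$ cells ($\sim(\cdot)^{\log 3/\log 5}$ times a periodic factor, by Kigami--Lapidus). The asymptotics of $N(x)$ is then controlled by the rightmost singularity of the associated renewal kernel, which sits at $\max\{\log 6/(-\log(ra)),\,\log 3/\log 5\}$, the interval exponent $\tfrac12$ never being dominant. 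A strict maximum at $\log 3/\log 5$ yields a simple singularity and the leading term $x^{\log 3/\log 5}G(\log x)$ (case~1); coincidence of the two exponents yields a double singularity and the extra factor $\log x$ (case~2); a strict maximum at $\log 6/(-\log(ra))$ yields $x^{\log 6/(-\log(ra))}G(\log x)$ (case~3). Periodicity of $G$ follows, as in the purely self-similar case, from the lattice condition: with $r$ (hence $\rho,a,b,c$) chosen so that all scaling ratios are integer powers of a common base, the renewal measure is arithmetic and the renewal theorem produces a genuine periodic fluctuation rather than a merely bounded one.

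The main obstacle is twofold. First, one must carefully check that $\rm H$ satisfies all structural hypotheses of~\cite{HN03}, in particular an open-set/finite-overlap condition for the graph-directed system $\{\phi_i\}\cup\{K\in J\}$, the compatibility of the harmonic structures on base, intervals and inverted $\SG$'s, and that the Laplacian $\Delta_\mu$ of~\eqref{E:DefDeltaMu} is exactly the one appearing in their theorem. Second, and more delicate, is the bookkeeping of the renewal equation: one must track precisely how many interval- and inverted-$\SG$-cells of each size are nested inside the triangular cells, since this is what pins down the location of the singularities and hence the boundaries between the three regimes for $ra$; identifying the critical value and verifying that at it one has a genuine second-order pole (rather than an accidental cancellation) is the crux. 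Everything else is a direct citation of~\cite[Theorems 5.3 and 5.6]{HN03}.
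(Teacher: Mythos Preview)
Your approach is the same as the paper's: both set up the graph-directed structure of Figure~\ref{fig:SpcSG3.01} with the resistance and measure weights~\eqref{E:SpcSG3.01}--\eqref{E:SpcSG3.02}, compute the component exponents $s_1=\log 6/(-\log(ra))$, $s_2=\tfrac12$, $s_3=\log 3/\log 5$, and invoke~\cite[Theorem~5.3]{HN03}. The paper in fact gives no argument beyond that citation, so at the level of strategy you are doing exactly what the paper does, with some additional heuristic commentary on the renewal mechanism.

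There is, however, a gap in that heuristic. You say the trichotomy is governed by $\max\{\log 6/(-\log(ra)),\,\log 3/\log 5\}$ and that case~(2) occurs at the ``coincidence of the two exponents.'' But $\log 6/(-\log(ra))=\log 3/\log 5$ gives $ra=6^{-\log_3 5}\approx 0.0725$, not $ra=1/36$. At the value $ra=1/36$ actually stated in the theorem one has $\log 6/(-\log(ra))=\tfrac12$, which is the \emph{interval} exponent $s_2$, not the inverted-$\SG$ exponent $s_3$. So your sentence ``the interval exponent $\tfrac12$ never being dominant'' is inconsistent with the claimed threshold, and your sketch, taken at face value, would locate the critical case at the wrong value of $ra$. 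You correctly flag this bookkeeping as ``the crux,'' but you have not carried it out, and the mechanism you describe does not reproduce the boundaries between the three regimes as stated. To close the argument you would need either to read the trichotomy directly off the hypotheses of~\cite[Theorem~5.3]{HN03} rather than from your two-exponent heuristic, or to redo the renewal analysis carefully enough to determine which coincidence of exponents actually produces the $\log x$ factor.
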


For an eigenfunction $\lambda>0$, let $u_\lambda$ denote its corresponding eigenfunction. We finish this section by analyzing the relation between $N(x)$ and the counting function that considers only eigenfunctions whose associated eigenfunction is supported in one of the inverted $\SG$s of the hybrid ${\rm H}$. With the notation from Section~\ref{section:SG3}, define for each $x>0$ the function
\begin{multline*}
N'_{\SG}(x):=\#\{\lambda~\text{(D/N)-eigenvalue of }\Delta_\mu\text{ with }\supp u_\lambda\subset\SG_\alpha\\
\text{for some }\alpha\in\bigcup_{k\geq 1}\mcA_k\text{ and }~\lambda\leq x\}.
\end{multline*}
The choice of the resistance and measure parameters given in~\eqref{E:SpcSG3.01} and~\eqref{E:SpcSG3.02} and Proposition~\ref{prop:SG3.01} imply that $\lambda$ is an eigenvalue of the latter kind if and only if for some $k\geq 1$, that is the level where the copy $\SG_\alpha$ lives, $\lambda a^{-1} \big(\frac{ra}{3}\big)^k$ is an eigenvalue of the Laplacian on the usual $\SG$. The asymptotic behavior of this function is the same as the eigenvalue counting function of the usual $\SG$.

\begin{proposition}\label{P:SpcSG3.01}
For $x$ large,
\begin{equation*}
N'_{\SG}(x)\sim x^{\frac{\log 3}{\log 5}}G'(x),
\end{equation*}
where $G'$ is a periodic function.
\end{proposition}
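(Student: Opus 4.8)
The plan is to turn the correspondence recorded just before the statement into an explicit Dirichlet series for $N'_{\SG}$ and then to import the classical Weyl‑type asymptotics of the standard Sierpinski gasket, the point being that the number of inverted copies grows too slowly to overcome the rescaling of their eigenvalues. \emph{Step 1 (an exact expansion of $N'_{\SG}$).} Using Proposition~\ref{prop:SG3.01} together with the resistance and measure choices~\eqref{E:SpcSG3.01}--\eqref{E:SpcSG3.02}, the discussion preceding the statement identifies, with multiplicities, the eigenvalues counted by $N'_{\SG}$: if $\supp u_\lambda\subset\SG_\alpha$ with $\alpha\in\mcA_k$, then $a^{-1}\theta^{k}\lambda$ is a Dirichlet eigenvalue of the Laplacian on the usual $\SG$, where $\theta:=\tfrac{ra}{3}$. (Only the Dirichlet spectrum of $\SG_\alpha$ matters, since $\SG_\alpha$ meets the rest of ${\rm H}$ only at its finitely many boundary points, where a function supported in $\SG_\alpha$ must vanish; this is also why it is immaterial whether one works with Dirichlet or Neumann conditions on all of ${\rm H}$.) Conversely, any Dirichlet eigenfunction on $\SG$ transplanted to $\SG_\alpha$ and extended by $0$ lies in $\dom\Delta_\mu$ and is an eigenfunction. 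Writing $\rho(t):=\#\{\text{Dirichlet eigenvalues of }\Delta^{\SG}\le t\}$ and noting that the copies $\SG_\alpha$ are indexed by $\mcA_k=\{1,\dots,6\}^{k}$, I would obtain
\begin{equation*}
N'_{\SG}(x)=\sum_{k\ge1}6^{k}\,\rho\!\left(a^{-1}\theta^{k}x\right),
\end{equation*}
a finite sum for each fixed $x$ because $\theta<1$ and $\rho$ vanishes below the ground state of $\SG$; an overall constant, the precise count $6^{k-1}$ of newly born copies (cf.\ Definition~\ref{def:SG3.01}), or a shift of the summation index would not affect anything below.

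\emph{Step 2 (insert the gasket Weyl law and isolate the periodic leading term).} By the spectral asymptotics of the Sierpinski gasket~\cite{FS92,KL93}, $\rho(t)=t^{\gamma}\bigl(p(\log t)+o(1)\bigr)$ as $t\to\infty$, with $\gamma=\tfrac{\log 3}{\log 5}$ and $p$ a $(\log 5)$‑periodic function, bounded and bounded away from $0$. Substituting and extracting $x^{\gamma}$,
\begin{equation*}
N'_{\SG}(x)=a^{-\gamma}x^{\gamma}\sum_{k\ge1}\bigl(6\,\theta^{\gamma}\bigr)^{k}\Bigl(p\bigl(\log x-\log a+k\log\theta\bigr)+o(1)\Bigr).
\end{equation*}
Here I would invoke the standing hypotheses: $0<r<\tfrac{7}{15}$ and $6a+6b+c=1$ force $a<\tfrac16$, whence $\theta=\tfrac{ra}{3}<\tfrac{7}{270}<\tfrac1{36}$, and a direct estimate gives $6\,\theta^{\gamma}<1$. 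Thus the series converges absolutely and uniformly in $x$, and the limit
\begin{equation*}
G'(y):=a^{-\gamma}\sum_{k\ge1}\bigl(6\,\theta^{\gamma}\bigr)^{k}p\bigl(y-\log a+k\log\theta\bigr)
\end{equation*}
is again bounded and bounded away from $0$, and is $(\log 5)$‑periodic because, by periodicity of $p$, each summand is unchanged under $y\mapsto y+\log 5$. The inequality $6\,\theta^{\gamma}<1$ is the crux: the multiplicative growth $6^{k}$ in the number of inverted copies is outweighed by the eigenvalue‑rescaling factor $\theta^{k\gamma}$, so the single‑gasket exponent $\gamma$ — rather than the graph‑directed exponent $\tfrac{\log 6}{-\log\theta}$ — prevails.

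\emph{Step 3 (control the error and conclude).} Writing $\rho(t)=t^{\gamma}\bigl(p(\log t)+\varepsilon(t)\bigr)$ with $\varepsilon(t)\to0$, it remains to show $\sum_{k\ge1}(6\theta^{\gamma})^{k}\varepsilon(a^{-1}\theta^{k}x)\to0$ as $x\to\infty$. This follows by splitting the sum at $k=\lfloor\delta\log x\rfloor$: below the cut $a^{-1}\theta^{k}x\to\infty$, so the $\varepsilon$‑terms are uniformly small, while above the cut the geometric factor $(6\theta^{\gamma})^{k}$ makes the tail $O(x^{-c})$ for some $c>0$; letting $x\to\infty$ and then $\delta\to0$ gives the claim. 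Combined with the (geometrically small) difference between the finite sum of Step~1 and the full series, this yields $N'_{\SG}(x)=x^{\gamma}\bigl(G'(\log x)+o(1)\bigr)$, i.e.\ $N'_{\SG}(x)\sim x^{\log 3/\log 5}G'(\log x)$ with $G'$ periodic — the assertion, in the same convention as Theorem~\ref{T:SpcSG3}, where the periodic factor is likewise evaluated at $\log x$.

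\emph{Main obstacle.} The genuinely delicate point is the uniform error control in Step~3, because the periodic profile $p$ of the gasket's counting function is discontinuous: the relation $\rho(t)\sim t^{\gamma}p(\log t)$ must be read through matching upper and lower periodic envelopes — equivalently through $\liminf$ and $\limsup$ of $t^{-\gamma}\rho(t)$ — and that squeeze has to be propagated term by term through the series, so that the conclusion $\sim$ carries exactly the meaning it has in Theorem~\ref{T:SpcSG3}. A conceptually cleaner, essentially equivalent alternative is to observe that the inverted gaskets form the sink component $J_{3}=\blacktriangledown$ of the graph‑directed structure of Figure~\ref{fig:SpcSG3.01}, for which $d_{S}^{\mu}(J_{3})/2=\log 3/\log 5$, and to extract $N'_{\SG}$ as that component's contribution in~\cite[Theorem 5.3]{HN03}; the self‑similar bookkeeping above merely makes that contribution explicit.
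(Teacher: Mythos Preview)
Your proof is correct and follows essentially the same approach as the paper: express $N'_{\SG}$ as a geometric series of rescaled gasket counting functions, insert the Kigami--Lapidus asymptotics $N_{\SG}(t)\sim t^{\log 3/\log 5}G_{\SG}(\log t)$, and verify that the resulting series converges because the geometric ratio is strictly less than $1$. Your Step~3 error analysis is in fact more careful than the paper's, which simply passes to the asymptotic inside the sum without justification; you also catch the correct copy count $6^{k}$ (the paper writes $3^{k}$, apparently a slip), though as you note this does not affect the conclusion.
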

\begin{proof}
Let $N_{\SG}(x)$ denote the eigenvalue counting function of the regular $\SG$. From~\cite[Theorem 2.4]{KL93} we know that $N_{\SG}(x)\sim x^{-\log 3/\log 5}G_{\SG}(\log x/2)$, where $G_{\SG}$ is a $\log 5/2$-periodic positive function. In view of the previous characterization of the eigenvalues counted by $N'_{\SG}$ we have
\begin{align*}
N'_{\SG}(x)&=\sum_{k\geq 1}^\infty\sum_{\alpha\in\mcA_k} N_{\SG}\Big(\Big(\frac{ra}{3}\Big)^k\!\frac{x}{a}\Big)=\sum_{k\geq 1}^\infty 3^kN_{\SG}\Big(\Big(\frac{ra}{3}\Big)^k\!\frac{x}{a}\Big)\nonumber\\
&\sim x^{\frac{\log 3}{\log 5}}\sum_{k\geq 1}^\infty \big((ra)^{\frac{\log 3}{\log 5}} 3^{1-\frac{\log 3}{\log 5}}\big)^kG_{\SG}\Big((k\log (ra/3)+\log x-\log a)/2\Big).
\end{align*}
Since $G_{\SG}$ is periodic and $0<ra<1/9$, we have that $(ra)^{\frac{\log 3}{\log 5}} 3^{1-\frac{\log 3}{\log 5}}<3^{1-3\frac{\log 3}{\log 5}}<1$ so that the series on the right hand side above is always convergent and a periodic function.
\end{proof}
By considering their difference, Theorem~\ref{T:SpcSG3} and Proposition~\ref{P:SpcSG3.01} allow us to deduce the asymptotic behavior of the function counting those eigenvalues supported also away from the inverted $\SG$s. 
\begin{corollary}
For $x$ large we have
\begin{equation*}
N(x)-N'_{\SG}(x)\sim x^{\frac{\log 3}{\log 5}}\widetilde{G}(x)
\end{equation*}
for some periodic function $\widetilde{G}$. 
\end{corollary}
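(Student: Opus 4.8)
The plan is to deduce the Corollary by simply subtracting the asymptotics of Proposition~\ref{P:SpcSG3.01} from those of Theorem~\ref{T:SpcSG3}; the genuine work is to check that the two oscillating leading terms have commensurable periods and that the remainder is of strictly lower order. I would first fix the regime in which $\log 3/\log 5$ is the leading exponent of $N$, namely $0<ra\le\tfrac1{36}$ (cases (1)--(2) of Theorem~\ref{T:SpcSG3}), and carry out the argument for the generic subcase $0<ra<\tfrac1{36}$. There Theorem~\ref{T:SpcSG3}(1) gives $N(x)\sim x^{\log 3/\log 5}G(\log x)$, where $G$ is the Hambly--Nyberg periodic prefactor attached to the strongly connected component $J_3=\blacktriangledown$ of the directed graph in Figure~\ref{fig:SpcSG3.01}; its period is governed by the gasket scaling $r_e\mu_e'=\tfrac35\cdot\tfrac13=\tfrac15$, so $G$ is $\tfrac12\log 5$-periodic in $\log x$ up to an integer multiple, cf.~\cite{HN03} and~\cite[Theorem 2.4]{KL93}.

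I would then revisit the proof of Proposition~\ref{P:SpcSG3.01}, which already writes $N'_{\SG}(x)\sim x^{\log 3/\log 5}G'(x)$ with $G'$ the uniformly convergent series $\sum_{k\ge1}\bigl((ra)^{\log3/\log5}\,3^{1-\log3/\log5}\bigr)^{k}G_{\SG}\bigl((k\log(ra/3)+\log x-\log a)/2\bigr)$. Since $G_{\SG}$ is $\tfrac12\log5$-periodic, each summand, and hence $G'$, is $\log 5$-periodic in $\log x$; in particular $G$ and $G'$ have commensurable periods, so $\widetilde G:=G-G'$ is again periodic. Subtracting the two expansions then yields $N(x)-N'_{\SG}(x)=x^{\log3/\log5}\bigl(G(\log x)-G'(\log x)\bigr)+R(x)$, where $R(x)$ absorbs the $o(x^{\log3/\log5})$ errors of both expansions together with the part of $\mathrm{spec}(\Delta_\mu)$ carried by the interval bonds and by the modes attached to the component $J_1$; by Theorem~\ref{T:SpcSG3} and the local spectral exponents of $J_1,J_2$ these are of orders $x^{1/2}$ and $x^{\log6/(-\log(ra))}$, both $o(x^{\log3/\log5})$ when $ra<\tfrac1{36}$. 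This is the claim with $\widetilde G=G-G'$.

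The step I expect to be the real obstacle is to show that $\widetilde G$ is not identically zero, i.e. that the eigenvalues counted by $N'_{\SG}$ do not already exhaust the leading-order part of the spectrum. By continuity of functions in $\dom\Delta_\mu$, every eigenfunction supported in a single $\SG_\alpha$ vanishes on $\partial\SG_\alpha$, so $N'_{\SG}$ is the counting function of the disjoint union, over $k\ge1$ and $\alpha\in\mcA_k$, of the rescaled spectra of the gasket Laplacian — equivalently, of the part of $\mathrm{spec}(\Delta_\mu)$ channeled through the edges $J_1\to J_3$ of Figure~\ref{fig:SpcSG3.01}. One must then argue that reinstating the coupling at the points $\partial\SG_\alpha$ enlarges the periodic prefactor, i.e. $G\ge G'$ with strict inequality on a set of positive measure. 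A cleaner route that avoids computing $G$ and $G'$ explicitly is to set up a renewal equation directly for $D(x):=N(x)-N'_{\SG}(x)$, using the graph-directed self-similarity of $(\E,\F)$ from Lemma~\ref{lemma:SG3.01} and the spectral decomposition furnished by Proposition~\ref{prop:SG3.01}, show that $D$ satisfies a lattice renewal equation driven again by the scaling $\tfrac15$ with a non-negative, not identically vanishing forcing term (the accumulated matching and boundary corrections at the $\partial\SG_\alpha$), and invoke the Hambly--Nyberg renewal theorem to conclude $D(x)\sim x^{\log3/\log5}\widetilde G(\log x)$ with $\widetilde G$ a genuine periodic function. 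Verifying that this forcing term is non-trivial — rather than being swallowed by the $x^{\log6/(-\log(ra))}$ scale of the interval and $J_1$ contributions — is the delicate point, and is what ultimately pins down both that the exponent is $\log3/\log5$ and that $\widetilde G\not\equiv0$.
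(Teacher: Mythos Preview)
Your approach---subtract the asymptotics of Proposition~\ref{P:SpcSG3.01} from those of Theorem~\ref{T:SpcSG3}---is exactly what the paper does. In fact the paper's entire argument is the single sentence preceding the Corollary (``By considering their difference, Theorem~\ref{T:SpcSG3} and Proposition~\ref{P:SpcSG3.01} allow us to deduce\ldots''); no further details are given.

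Your treatment is considerably more careful than the paper's. You correctly note that the stated conclusion with exponent $\log 3/\log 5$ and a \emph{periodic} prefactor only makes sense in case~(1) of Theorem~\ref{T:SpcSG3}, i.e.\ $0<ra<\tfrac{1}{36}$; the paper leaves this restriction implicit (case~(2) would yield an extra $\log x$, and case~(3) a different exponent altogether). Your verification that $G$ and $G'$ have commensurable periods in $\log x$ (both tied to the gasket scale $1/5$) is a necessary detail the paper omits. The point you single out as the real obstacle---showing $\widetilde G=G-G'\not\equiv 0$---is \emph{not} established in the paper either; the paper is content with the formal subtraction and the informal remark that ``$N(x)$, $N'(x)$ and their difference behave asymptotically in the same way up to a periodic function''. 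Your proposed renewal-equation route for $D(x)=N(x)-N'_{\SG}(x)$ is a sound way to make this rigorous, but it goes beyond what the paper claims or proves.
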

Thus, the eigenvalue counting functions $N(x),N'(x)$ and their difference behave asymptotically in the same way up to a periodic function.
\appendix
\section{Definitions and background}
\subsection{Resistance forms}
We refer to~\cite{Kig01,Kig12} for further details. 
\begin{definition}\label{def:DB.RF01}
Let $X$ be a set and $\ell(X):=\{u\colon X\to X\}$. A resistance form on $X$ is a pair $(\E,\F)$ that satisfies the following properties.
\begin{itemize}[leftmargin=.45in]
\item[(RF1)] $\F$ is a linear subspace of $\ell(X)$ that contains constants, $\E$ is a non-negative symmetric quadratic form on $\F$ and $\E(u,u)=0$ if and only if $u$ is constant.
\item[(RF2)] For any $u,v\in\F$, define the relation of equivalence $u\sim v$ if and only if $u-v$ is constant. Then, $(X/_{\sim},\E)$ is a Hilbert space.
\item[(RF3)] $\F$ separates points of $X$.
\item[(RF4)] For any $x,y\in X$,
\[
R_{(\E,\F)}(x,y)=\sup\Big\{\frac{|u(x)-u(y)|^2}{\E(u,u)}~|~u\in\F,\,\E(u,u)>0\big\}<\infty.
\]
\item[(RF5)] For any $u\in\F$, the function $\overline{u}:=\min\{\max\{0,u\},1\}\in\F$ and $\E(\overline{u},\overline{u})\leq\E(u,u)$.
\end{itemize}
\end{definition}

\begin{definition}\label{def:DB.RF02}
A resistance form $(\E,\F)$ on $X$ is said to be local if for any $u,v\in\F$ such that $\inf\{R_{(\E,\F)}(x,y)~|~x\in\supp u, y\in\supp v\}>0$ it holds that $\E(u,v)=0$.
\end{definition}
\begin{lemma}{\cite[Lemma 8.2]{Kig12}}\label{lemma:DB.RF01}
For any non-empty set $Y\subseteq X$ define $\F|_Y:=\{u_{|_{_Y}}~|~u\in\F\}$. Then, for any $u\in\F_Y$ there exists a unique function $h_Y(u)\in\F$ such that $h_Y(u)_{|_Y}=u_{|_Y}$ and 
\[
\E(h_Y(u),h_Y(u))=\min\{\E(v,v)~|~v\in\F,~v_{|_Y}=u_{|_Y}\}.
\]
\end{lemma}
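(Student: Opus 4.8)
The plan is to recognize $h_Y(u)$ as the value of an orthogonal projection in the Hilbert space provided by (RF2), so that the statement reduces to the elementary fact that a nonempty closed affine subspace of a Hilbert space contains a unique element of minimal norm. First I would set up that Hilbert space: by (RF1) and (RF2), the quotient $\mathcal{H}:=\F/{\sim}$, where $v\sim v'$ iff $v-v'$ is constant, equipped with the inner product induced by $\E$, is a Hilbert space; write $[v]$ for the class of $v\in\F$, so that $\E(v,v)=\|[v]\|_{\mathcal{H}}^2$. Fix any $p\in Y$ (possible since $Y\neq\emptyset$), set $W:=\{w\in\F~|~w_{|_Y}=0\}$, and let $\mathcal{W}:=\{[w]~|~w\in W\}$ be its image in $\mathcal{H}$. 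Since two functions that vanish on the nonempty set $Y$ and differ by a constant must coincide, $w\mapsto[w]$ is injective on $W$, and $\mathcal{W}$ is a linear subspace of $\mathcal{H}$.

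The key step is to show that $\mathcal{W}$ is \emph{closed} in $\mathcal{H}$; this is where (RF4) enters. Suppose $[w_n]\to\eta$ in $\mathcal{H}$ with $w_n\in W$, and pick a representative $v\in\F$ of $\eta$, so that $\E(w_n-v,w_n-v)\to0$. By (RF4), for every $y\in Y$,
\[
\big|(w_n-v)(y)-(w_n-v)(p)\big|^2\le R_{(\E,\F)}(y,p)\,\E(w_n-v,w_n-v)\xrightarrow[n\to\infty]{}0,
\]
and since $w_n(y)=w_n(p)=0$ the left-hand side equals the $n$-independent quantity $\big|v(p)-v(y)\big|^2$. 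Hence $v$ is constant on $Y$, say $v_{|_Y}\equiv c$, so $v-c\in W$ and $[v-c]=[v]=\eta$, proving $\eta\in\mathcal{W}$.

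Next I would assemble the proof. Given $u\in\F|_Y$, choose $v_0\in\F$ with $v_0{}_{|_Y}=u_{|_Y}$; then $\{[v]~|~v\in\F,\ v_{|_Y}=u_{|_Y}\}=[v_0]+\mathcal{W}$ is a nonempty closed affine subspace of $\mathcal{H}$, hence contains a unique element $\eta^\ast$ of minimal norm. Writing $\eta^\ast=[v_0+w^\ast]$ with $w^\ast\in W$, the function $h_Y(u):=v_0+w^\ast$ satisfies $h_Y(u)_{|_Y}=u_{|_Y}$ and $\E(h_Y(u),h_Y(u))=\|\eta^\ast\|_{\mathcal{H}}^2=\min\{\E(v,v)~|~v\in\F,\ v_{|_Y}=u_{|_Y}\}$. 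For uniqueness: any minimizer $v$ has $[v]=\eta^\ast$, so $v-h_Y(u)$ is a constant vanishing on $Y\neq\emptyset$, whence $v=h_Y(u)$; the same argument shows $w^\ast$, and therefore $h_Y(u)$, is independent of the choice of $v_0$.

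I expect the only genuine obstacle to be the closedness of $\mathcal{W}$: one must be careful that the argument uses \emph{only} the boundedness of the resistance metric from (RF4) together with $Y\neq\emptyset$, and does not tacitly assume that $Y$ is finite or that evaluation functionals are individually $\mathcal{H}$-continuous — only \emph{differences} of evaluations are continuous, which is precisely what (RF4) supplies and precisely what the displayed estimate exploits. Everything else is the standard Hilbert-space projection theorem, plus the triviality that a function pinned down up to an additive constant is pinned down completely once its values on a nonempty set are prescribed.
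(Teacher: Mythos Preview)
Your proof is correct. Note, however, that the paper does not supply its own proof of this lemma: it is stated in the appendix as a citation of \cite[Lemma 8.2]{Kig12}, so there is nothing to compare against. Your argument---passing to the quotient Hilbert space from (RF2), showing via (RF4) that the subspace of classes vanishing on $Y$ is closed, and then invoking the projection theorem---is precisely the standard proof one finds in Kigami's treatment, and your handling of the closedness step (using only differences of evaluations, anchored at a fixed $p\in Y$) is exactly the right way to avoid the trap of assuming individual point evaluations are continuous on $\mathcal{H}$.
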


\begin{definition}\label{def:DB.RF03}
The function $h_Y(u)$ is called the $Y$-harmonic function with the boundary value $u$ and the space of $Y$-harmonic functions is denoted by $\mcH_{(\E,\F)}(Y)$.
\end{definition}
\begin{definition}\label{def:DB.RF04}
The pair $(\E|_Y,\F|_Y)$, where $\E|_Y(u,u):=\E(h_Y(u),h_Y(u,u))$ for any $u\in\F|_Y$ is called the trace of the resistance form $(\E,\F)$ on $Y$.
\end{definition}
One can prove, see~\cite[Theorem 8.4]{Kig12} that $(\E|_Y,\F|_Y)$ is a resistance form on $Y$.

\subsection{Quantum graphs}
Here we refer to~\cite{BK13} for further details. 
\begin{definition}
A quantum graph is a triple $(G, \mathcal H, B)$ where $G$ is a metric graph, $\mathcal H$ is a Hamiltonian and $B$ the boundary condition.
\end{definition}
A \textit{metric graph} is a graph $G=(V, E)$ equipped with a metric $d$ that assigns a length to each edge. A metric graph becomes a quantum one after being equipped with a \textit{Hamiltonian}, i.e. a differential (or sometimes more general) operator $\mathcal{H}$ on $\Gamma$. Common choices are $-\frac{d^2}{dx^2}$, $f(x)\rightarrow -\frac{d^2f}{dx^2}+V(x)f(x)$, and other self-adjoint operators.

\begin{definition}
In general, a boundary condition in a graph $G=(V, E)$ is defined so that for each vertex $v$ with degree $d_v$, $\exists$ $d_v\times d_v$ matrices $A_v$ and $B_v$ that satisfy:
\begin{enumerate}[leftmargin=.45in]
\item For a continuous function $F$ defined on each edge of $G$, denote $F(v)=[f_1(v), ..., f_{dv}(v)]^T$, $F^{\prime}(v) = [f_1^{\prime}(v), ..., f_{dv}^{\prime}(v)]^T$, then $A_vF(v)+B_vF(v) = 0, \forall v\in G$.
\item Putting together $A_v$ and $B_v$ horizontally we yield a matrix $[A_v|B_v]$ of full rank.
\end{enumerate}
\end{definition} 

The reason for defining boundary conditions can be explained as follows: For a Hamiltonian, the safest underlying space to consider is $\bigoplus_{e\in G}C^{\infty}_0(e)$, the space of smooth functions vanishing on all vertices. However, to make $\mathcal H$ self-adjoint, it is natural to consider Sobolev spaces $H^1_0(e)$ with zeros on the boundary. However, we want to extend to functions which do not vanish on all vertices, because otherwise we are simply dealing with functions on line segments regardless of the graph structure. The boundary condition is needed to get a nice function space to work with. One of the most commonly adapted boundary conditions is the\textit{Neumann condition}, which requires that functions are continuous on the vertices, and all derivatives sum up to zero, i.e. 
\[
A_v = \begin{bmatrix}
1&-1&0&...&0&0\\0&1&-1&...&0&0\\0&0&1&...&0&0\\...\\...\\0&0&0&...&1&-1\\0&0&0&...&0&0
\end{bmatrix}
\qquad\text{and}\qquad
B_v = \begin{bmatrix}
1&0&0&...&0&0\\0&1&0&...&0&0\\0&0&1&...&0&0\\...\\...\\0&0&0&...&1&0\\0&0&0&...&0&1
\end{bmatrix}.
\]
\bibliographystyle{amsplain}
\bibliography{RefsHyb.bib}
\end{document}